\DeclareMathOperator{\dist}{dist}
\renewcommand{\leq}{\leqslant}
\renewcommand{\geq}{\geqslant}
\newcommand{\F}{\mathcal{F}}
\newcommand{\R}{\mathbb{R}}
\newcommand{\M}{\mathrm{M}}
\newcommand{\MM}{\mathrm{M}}
\newcommand{\scalprod}[2]{\langle{#1},{#2}\rangle}
\newcommand{\eq}[1]{\begin{equation}{#1}\end{equation}}
\newcommand{\mlt}[1]{\begin{multline}{#1}\end{multline}}
\newcommand{\alg}[1]{\begin{align}{#1}\end{align}}
\newcommand{\set}[2]{\{{#1}\mid{#2}\}}
\newcommand{\Set}[2]{\Big\{{#1}\,\Big|\;{#2}\Big\}}
\newcommand{\Eeqref}[1]{\stackrel{\scriptscriptstyle{\eqref{#1}}}{=}}
\newcommand{\Leqref}[1]{\stackrel{\scriptscriptstyle{\eqref{#1}}}{\leq}}
\newcommand{\LseqrefTwo}[2]{\stackrel{\begin{aligned}\scriptscriptstyle\eqref{#1}\\[-1em] \scriptscriptstyle\eqref{#2}\end{aligned}\vspace{-1cm}}{\lesssim}}
\newcommand{\LseqrefThree}[3]{\stackrel{\begin{aligned}\scriptscriptstyle\eqref{#1}\\[-1em] \scriptscriptstyle\eqref{#2}\\[-1em] \scriptscriptstyle\eqref{#3}\end{aligned}}{\lesssim}}
\newcommand{\Lref}[1]{\stackrel{#1}{\leq}}
\newcommand{\Z}{\mathbb{Z}}
\newcommand{\N}{\mathbb{N}}
\newcommand{\Haus}{\mathcal{H}}
\newtheorem{theorem}{Theorem}
\newtheorem{corollary}[theorem]{Corollary}
\newtheorem{defin}{Theorem}[section]
\theoremstyle{definition}
\newtheorem{definition}[defin]{Definition}
\newtheorem{example}[defin]{Example}
\newtheorem{lem}[defin]{Lemma}
\newtheorem{cor}[defin]{Corollary}
\newtheorem{rem}[defin]{Remark}
\newtheorem{st}[defin]{Proposition}
\theoremstyle{remark}
\newtheorem{remark}[defin]{Remark}
\numberwithin{equation}{section}
\begin{document}

% \title[short text for running head]{full title}
\title{On dimension stable spaces of measures}

%    Only \author and \address are required; other information is
%    optional.  Remove any unused author tags.

%    author one information
% \author[short version for running head]{name for top of paper}
\author[D. Spector]{Daniel Spector}
\address{Department of Mathematics, National Taiwan Normal University, No. 88, Section 4, Tingzhou Road, Wenshan District, Taipei City, Taiwan 116, R.O.C.}
\curraddr{}
\email{spectda@protonmail.com}
\thanks{}

%    author two information
\author[D. Stolyarov]{Dmitriy Stolyarov}
\address{St. Petersburg State University, Department of Mathematics and Computer Science,14th Line 29b, Vasilyevsky Island, St. Petersburg, Russia, 199178;  
St. Petersburg Department of Steklov Mathematical Institute, Fontanka 27, St. Petersburg, Russia, 191023}
\curraddr{}
\email{d.m.stolyarov@spbu.ru}
\thanks{}

%    \subjclass is required.
\subjclass[2010]{Primary }

\date{}

\dedicatory{}

%    "Communicated by" -- provide editor's name; required.
\commby{}

%    Abstract is required.
\begin{abstract}
In this paper, we define spaces of measures $DS_\beta(\mathbb{R}^d)$ with dimensional stability $\beta \in (0,d)$.  These spaces bridge between $M_b(\mathbb{R}^d)$, the space of finite Radon measures, and $DS_d(\mathbb{R}^d)= \mathrm{H}^1(\mathbb{R}^d)$, the real Hardy space.  We show the spaces $DS_\beta(\mathbb{R}^d)$ support Sobolev inequalities for $\beta \in (0,d]$, while for any $\beta \in [0,d]$ we show that the lower Hausdorff dimension of an element of $DS_\beta(\mathbb{R}^d)$ is at least $\beta$.  

% We also show that elements of the recently introduced spaces $BMO^\beta(\mathbb{R}^d)$ act as linear continuous functionals on these spaces.  The question of whether one can identify the dual of $DS_\beta(\mathbb{R}^d)$ with $BMO^\beta(\mathbb{R}^d)$ remains open.
\end{abstract}

\maketitle
\tableofcontents

\section{Introduction}
\subsection{Introduction and Main Results}
The space $M_b(\mathbb{R}^d)$ of finite Radon measures is a somewhat natural choice in the modeling of physical phenomena: it has a simple mathematical structure; the norm on this space can be interpreted as the mass of the physical quantity that a given measure represents; bounded sequences admit weakly-star compact through the identification of this space with the dual of $C_0(\mathbb{R}^d)$.  Two main caveats to its use, however, are its failure to support a strong-type Sobolev inequality and its lack of stability with respect to dimensional estimates.
 
The failure of an analog of Sobolev's $L^p$ inequality \cite{Sobolev} is the assertion that for $\alpha \in (0,d)$ there is no universal constant $C_1=C_1(\alpha,d)>0$ for which
\begin{align*}
\|I_\alpha \mu \|_{L^{d/(d-\alpha)}(\mathbb{R}^d)} \leq C_1 \|\mu\|_{M_b(\mathbb{R}^d)}
\end{align*}
for all $\mu \in M_b(\mathbb{R}^d)$.  Here we use $I_\alpha \mu$ to denote the Riesz potential of a finite Radon measure $\mu \in M_b(\mathbb{R}^d)$, which is defined pointwise by the formula
\begin{align*}
I_\alpha \mu(x):= \frac{1}{\Gamma(\alpha/2)} \int_0^\infty t^{\alpha/2-1} e^{t\Delta} \mu \;dt \equiv \frac{1}{\gamma(\alpha)} \int_{\mathbb{R}^d} \frac{d\mu(y)}{|x-y|^{d-\alpha}},
\end{align*}
where $\Gamma$ is the Euler Gamma function, $e^{t\Delta}$ is the heat semi-group, whose action on a measure can be computed by convolution of the measure with the function $\frac{1}{(4\pi t)^{d/2}} \exp(-|x|^2/4t)$, and $\gamma(\alpha)$ is a suitable normalization constant.  The prevailing wisdom is that one must be content with an estimate in weak-$L^{d/(d-\alpha)}$, the Marcinkiewicz space, see e.g. \cite{Zygmund} or \cite[Eq (14) on p.~120]{S}.

The lack of dimensional stability is the assertion that a sequence $\{\mu_n\} \subset M_b(\mathbb{R}^d)$ for which
\begin{align*}
\| \mu_n \|_{M_b(\mathbb{R}^d)} &\leq C_2,\\
\operatorname*{dim_\mathcal{H}} \mu_n &\geq \beta, \quad \beta \in (0,d],
\end{align*}
need not guarantee\footnote{Consider, for example, an approximation of the identity.}
\begin{align*}
\operatorname*{dim_\mathcal{H}} \mu \geq \beta,
\end{align*}
for any weak-star limit $\mu$ of any subsequence of $\{\mu_n\}$.  Here we use $\operatorname*{dim_\mathcal{H}} \mu$ to denote the lower Hausdorff dimension of $\mu \in M_b(\mathbb{R}^d)$, which can be computed by the formula
\begin{align*}
\operatorname*{dim_\mathcal{H}} \mu &= \sup \left\{ \beta \in (0,d] \colon \mathcal{H}^\beta(E) =0 \implies |\mu|(E)=0 \right\}\\
&= \inf \left\{ \operatorname*{dim_\mathcal{H}} A \colon |\mu|(A)>0\right\}
\end{align*}
where $|\mu|$ denotes the total variation measure of $\mu \in M_b(\mathbb{R}^d)$.  This is a quantification of the singularity of measures, see e.g. \cite[Chapter 10]{Falconer}.

%of more structurally robust spaces,
%subsequent study of spaces which are in the same scaling as $M_b(\mathbb{R}^d)$, along with observations of 

Yet in practice in the modeling of physical phenomena with measures, one finds these drawbacks are often somewhat surprisingly absent in problems under study.  Consider, for example, the equations of magnetostatics:  If one lets $J\colon\mathbb{R}^3\to\mathbb{R}^3$ denote the electric current density and $B\colon\mathbb{R}^3\to\mathbb{R}^3$ the magnetic field, Maxwell's equations for the magnetic field in the static regime are given by the relations
\begin{align*}
\operatorname*{curl}B&=J,\\
\operatorname*{div}B&=0.
\end{align*}
A result pioneered by J. Bourgain and H. Brezis \cite{BourgainBrezis2004,BourgainBrezis2007} and subsequently refined on the Lorentz scale by Felipe Hernandez and the first author \cite{HS} and independently by the second author \cite{Stolyarov} asserts the existence of a universal constant $C_3>0$ such that
\begin{align}\label{HS}
\|B\|_{L^{3/2,1}(\mathbb{R}^3;\mathbb{R}^3)} \leq C_3\|J\|_{M_b(\mathbb{R}^3;\mathbb{R}^3)}.
\end{align}
Thus despite its failure for general measures, a strong-type Sobolev inequality persists for this model.

As for dimensional stability, if one has a bounded sequence of compatible electric current densities $\{J_n\} \subset M_b(\mathbb{R}^3;\mathbb{R}^3)$, i.e.
\begin{align*}
\|J_n\|_{M_b(\mathbb{R}^3;\mathbb{R}^3)}&\leq C_4,\\
\operatorname*{div}J_n&=0,
\end{align*}
then if $J$ is any weak-star limit of the sequence $\{J_n\}$, necessarily
\begin{align}\label{W}
\operatorname*{dim_\mathcal{H}} J \geq 1,
\end{align}
as \eqref{W} holds for all $J \in M_b(\mathbb{R}^3;\mathbb{R}^3)$ for which $\operatorname*{div}J=0$ in the sense of distributions; for such charges~\eqref{W} follows either from Smirnov's decomposition theorem~\cite{Smirnov} or from the Fourier analytic approach of~\cite{RW} (see also \cite{GHS}).  The Sobolev inequality \eqref{HS} and dimensional stability \eqref{W} rely in a crucial way on the fact that $J$ are not arbitrary vector measures but also satisfy $\operatorname*{div}J=0$.  

The discovery of Sobolev inequalities and dimensional estimates for measures with additional structure has a long history, which in several dimensions can be dated to at least the late 1950s.  The first result which should be mentioned is naturally the extension of Sobolev's inequality due to E. Gagliardo \cite{Gagliardo}, L. Nirenberg \cite{Nirenberg}, Federer and Fleming \cite{FF}, and Maz'ya \cite{mazya}, who independently pioneered Sobolev inequalities in this scaling.  The proofs of Federer and Fleming \cite{FF} and Maz'ya \cite{mazya} rely on the coarea formula, which also implies that for any $ Du \in M_b(\mathbb{R}^d;\mathbb{R}^d)$ one has
\begin{align*}
\operatorname*{dim_\mathcal{H}} Du \geq  d-1.
\end{align*}
As one has examples for which there is equality, e.g.~the weak derivative of the characteristic function of a ball, this shows the dimension of this space is $d-1$.
At essentially the same time one has the work of E. Stein and G. Weiss \cite{Stein-Weiss} which simultaneously achieved different Sobolev inequalities and dimensional estimates.  In particular, in their paper \cite{Stein-Weiss} generalizing the notion of conjugate harmonic functions to more than one dimension, Stein and Weiss introduced
\begin{align*}
\mathrm{H}^1(\mathbb{R}^d) := \{ f \in L^1(\mathbb{R}^d) \colon \nabla I_1 f \in L^1(\mathbb{R}^d;\mathbb{R}^d) \},
\end{align*}
the real Hardy space in several variables.  For this space they proved a number of results.  Concerning an analog of Sobolev's inequality, they demonstrated\footnote{A Lorentz refinement of this inequality is due to Dorronsorro \cite{Dorronsoro} (see also \cite{Spector} for a Lorentz inequality for potentials of gradients, \cite{Adams:1988} for the stronger trace inequality and \cite{Ponce-Spector,RSS,Spector-NA, Spector-PM, Stolyarov-1} for the details of this argument). This inequality and standard harmonic analysis can be shown to imply the inequality \eqref{HS} for $J_i \in \mathrm{H}^1(\mathbb{R}^d)$.} that for $\alpha \in (0,d)$ one has the existence of a universal constant $C_5=C_5(\alpha,d)>0$ such that
\begin{align*}
\|I_\alpha f \|_{L^{d/(d-\alpha)}(\mathbb{R}^d)} \leq C_5 \|f\|_{\mathrm{H}^1(\mathbb{R}^d)}
\end{align*}
for all $f \in \mathrm{H}^1(\mathbb{R}^d)$.  

Concerning dimensional estimates, Stein and Weiss' result can be shown to imply that
\begin{align*}
\operatorname*{dim_\mathcal{H}} f = d,
\end{align*}
for all $f \in \mathrm{H}^1(\mathbb{R}^d)$.  From this one deduces a dimensional stability result of order $d$, as for any sequence which satisfies
\begin{align*}
\|f_n\|_{\mathrm{H}^1(\mathbb{R}^d)}&\leq C_6,
\end{align*}
one has
\begin{align*}
\operatorname*{dim_\mathcal{H}} f = d
\end{align*}
for any weak-star limit $\mu=f\;dx$ of any subsequence of $\{f_n\}$, see also Uchiyama's result \cite{Uchiyama} which establishes such dimension estimates for more general Hardy spaces. 
%Here one relies on the subsequent identification of $\mathcal{H}^1(\mathbb{R}^d)$ as the dual of the space of functions of vanishing mean oscillation, so that the norm bounds and weak-star convergence allow one to conclude that a limit $\mu$ which is a priori only a finite Radon measure is actually an element of the Hardy space.

The question of Sobolev inequalities for $p=1$ and dimensional estimates follows a sort of parallel development from this point, though the two are of course closely related.  Toward Sobolev inequalities, one has the Korn--Sobolev embedding of M.J. Strauss \cite{Strauss} and the work of J. Bourgain and H. Brezis \cite{BourgainBrezis2004,BourgainBrezis2007}, the latter of which led to the systematic treatment of inequalities for closed subspaces of vector-valued measures by J. Van Schaftingen \cite{VanSchaftingen_2013}, who among other results proved that for $\alpha \in (0,d)$ one has
\begin{align*}
\|I_\alpha F \|_{L^{d/(d-\alpha)}(\mathbb{R}^d;\mathbb{R}^l)} \leq C_7 \|F\|_{M_b(\mathbb{R}^d;\mathbb{R}^l)}
\end{align*}
for all $F \in M_b(\mathbb{R}^d;\mathbb{R}^l)$ such that $LF=0$ for some homogeneous constant coefficient linear differential operator $L$ if and only if $L$ is cocancelling. We recall that an operator $L(D)$ as above is said to be \emph{cocancelling} when
	$$
	\bigcap_{\xi\in\mathbb{R}^d}\operatorname*{ker} L(\xi)=\{0\}.
	$$
 Here we identify the differential operator $L$ with its symbol.
 
 Van Schaftingen's result was subsequently refined on the Lorentz and Besov scales \cite{HS, SpectorHernandezRaita2021, Stolyarov} and in certain cases a trace inequality has been obtained \cite{RSS}, see also \cite{SSVS,SVS, VanSchaftingen_2004,VanSchaftingen_2004_ARB,VanSchaftingen_2010,VanSchaftingen_2013, VanSchaftingen_2015} for related estimates in this spirit and \cite{GS,GS1, MS, Chen-Spector, VS} for dual results.  The question of the validity of trace inequalities in the spirit of \cite{Adams:1988,RSS} presents a significant open area in the theory of Sobolev inequalities in this scaling.  

The study of the problem of dimensional estimates can be said to be in a similar state to the question of trace inequalities, as despite numerous result in this direction \cite{AW,ARDPHR,AAR,Raita_report,DS,RA, Stol-Woj}, the problem of the computation of the dimension of the singular set of a given closed subspace $X \subset M_b(\mathbb{R}^d;\mathbb{R}^l)$ has not been fully resolved.  More precisely, let~$L$ be a homogeneous linear differential operator with constant coefficients (or more generally a pseudo-differential operator with constant coefficients) and $X$ be defined by the distributional constraint $L$:
\begin{equation}\label{SpaceX}
    X = \{\mu \in M_b(\mathbb{R}^d;\mathbb{R}^l)\colon  L\mu = 0\}.   
\end{equation}
Then one defines the dimension of $X$ as
\begin{align*}
\kappa:= \inf_{\nu \in X} \operatorname*{dim_{\mathcal{H}}} \nu,
\end{align*}
and the question is to compute $\kappa(X)$.

This question was studied for the first time in~\cite{RW} (in the context of more general Fourier restrictions); the main result says that~$\kappa \geq 1$ provided~$X$ does not contain vectorial delta-measures. In the setting~\eqref{SpaceX}, this is equivalent to~$L$ being cocancelling. The paper~\cite{ARDPHR} gives a bound from below on~$\kappa$ formulated in terms of the $k$-wave cone of~$L$. The latter cone is a purely algebraic object computed from the symbol of~$L$, and the lower bound for~$\kappa$ obtained in this way is always an integer. In the cases where the estimates of~\cite{ARDPHR} are sharp, the measures in~$X$ having the smallest possible dimension are related to flat measures, i.e. Hausdorff measures on affine subspaces. The bounds of~\cite{ARDPHR} are not sharp, see~\cite{RA} for an example along with a sharper result for this example.   In a discrete version of the dimension problem resolved in~\cite{ASW}, it was shown that the extremal measures might be fractal.  Thus one wonders whether this is the case in the continuous analog.  Therefore finally, the state of art in the dimension problem is that it is still unknown whether~$\kappa$ is always an integer, which should be settled, along with a canonical way to compute it in terms of the operator $L$.

The starting point of this paper is the observation that in contrast to the dimension zero case -- the space of finite Radon measures -- and the dimension $d$ case -- the real Hardy space -- the literature concerning Sobolev inequalities around the space of measures and dimensional estimates all focus on the vector-valued setting, in the spirit of the inequality \eqref{HS}.  It therefore seems desirable to introduce scalar spaces of measures with dimensional stability $\beta \in (0,d]$.  To this end we first define the notion of a $\beta$-atom:
\begin{definition}[$\beta$-atom]
For $\beta \in (0,d]$,  we say that $a \in M_b(\mathbb{R}^d)$ is a $\beta$-atom if there exists a cube $Q\subset \mathbb{R}^d$ with sides parallel to the coordinate axes such that
\begin{enumerate}
\item \label{supp}$\operatorname*{supp} a \subset Q$;
\item \label{ave} $a(Q) = 0$;
\item $\sup_{x \in \mathbb{R}^d, t>0} |t^{(d-\beta)/2} e^{t\Delta} a (x)|  \leq \frac{1}{l(Q)^\beta}\label{linfty} $;
\item \label{l1} $|a|(\mathbb{R}^d) \leq 1$.
\end{enumerate}
\end{definition}

We next define atomic spaces with dimensional stability $\beta$:
\begin{definition}[$\beta$-atomic space]
 Let $\beta \in [0,d]$.  Define the atomic space of dimension $\beta$ by
\begin{align*}
DS_\beta&(\mathbb{R}^d):=\\ 
&\left\{\mu \in M_b(\mathbb{R}^d)\colon \mu = \lim_{n\to \infty}\sum_{i=1}^{n} \lambda_{i,n} a_{i,n},    \text{ $a_{i,n}$ $\beta$-atoms, } \limsup_{n \to \infty} \sum_{i=1}^n |\lambda_{i,n}|<+\infty \right\}
\end{align*}
Here the convergence is intended weakly-star as measures,
\begin{align*}
\int \varphi \;d\mu= \lim_{n \to \infty}  \sum_{i=1}^{n} \lambda_{i,n} \int \varphi \;da_{i,n},
\end{align*}
for all $\varphi \in C_0(\mathbb{R}^d)$, and the space is equipped with the norm
\begin{align*}
\|\mu \|_{DS_\beta(\mathbb{R}^d)}:= \inf \left\{\limsup_{n \to \infty} \sum_{i=1}^n |\lambda_{i,n}| \colon \mu = \lim_{n\to \infty}\sum_{i=1}^{n} \lambda_{i,n} a_{i,n} \right\}.
\end{align*}
\end{definition}

We are now prepared to state the main results of this paper.  We begin with the Sobolev inequalites.  Our first result asserts that the space $DS_\beta(\mathbb{R}^d)$ supports a Besov--Lorentz scale Sobolev inequality for any $\beta \in (0,d]$.
\begin{theorem}\label{strong-type}
Let $\beta \in (0,d]$.  For each $\alpha \in (0,d)$ there exists a constant $C=C(\alpha,\beta,d)>0$ such that
\begin{align*}
\left\|I_\alpha \mu \right\|_{\dot{B}^{0,1}_{d/(d-\alpha),1}(\mathbb{R}^d)} \leq C \|\mu\|_{DS_\beta(\mathbb{R}^d)}
\end{align*}
for all $\mu \in DS_\beta(\mathbb{R}^d)$.
\end{theorem}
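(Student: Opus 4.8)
The strategy is to reduce everything to a single uniform estimate for one $\beta$-atom and then prove that estimate by splitting the Littlewood--Paley (equivalently heat-semigroup) decomposition of $I_\alpha a$ at the scale of the defining cube, using condition (3) to control the high frequencies and condition (2) to control the low frequencies, with condition (4) feeding an interpolation at both ends. Write $p=d/(d-\alpha)$ throughout.

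\textbf{Reduction to one atom.} Granting the atom bound stated below, suppose $\mu=\lim_n\sum_{i=1}^{n}\lambda_{i,n}a_{i,n}$ weak-$*$ with $\limsup_n\sum_i|\lambda_{i,n}|<\infty$. By subadditivity of the Besov--Lorentz norm the partial sums $I_\alpha(\sum_i\lambda_{i,n}a_{i,n})$ are bounded in $\dot B^{0,1}_{p,1}(\mathbb{R}^d)$ by $\big(\sup_a\|I_\alpha a\|_{\dot B^{0,1}_{p,1}}\big)\sum_i|\lambda_{i,n}|$; since $I_\alpha$ commutes with weak-$*$ limits in the sense of distributions and each Littlewood--Paley block $\Delta_j$ maps weak-$*$ convergent bounded families into, say, $L^{p,1}$-convergent ones, a blockwise Fatou argument gives $\|I_\alpha\mu\|_{\dot B^{0,1}_{p,1}}\le\big(\sup_a\|I_\alpha a\|_{\dot B^{0,1}_{p,1}}\big)\limsup_n\sum_i|\lambda_{i,n}|$, and taking the infimum over decompositions yields the theorem. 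Moreover, the parabolic rescaling $x\mapsto x/\ell(Q)$ together with a translation carries a $\beta$-atom on $Q$ to a $\beta$-atom on the unit cube, and one checks that the transformation $g\mapsto \ell^{d}g(\ell\,\cdot)$ maps atoms to atoms while, for the critical exponent, $\|I_\alpha(g(\cdot/\ell))\|_{\dot B^{0,1}_{p,1}}=\ell^{\alpha+d/p}\|I_\alpha g\|_{\dot B^{0,1}_{p,1}}=\ell^{d}\|I_\alpha g\|_{\dot B^{0,1}_{p,1}}$; combining these, $\|I_\alpha a\|_{\dot B^{0,1}_{p,1}}$ is unchanged. Hence it suffices to bound $\|I_\alpha a\|_{\dot B^{0,1}_{p,1}}$ by a constant $C(\alpha,\beta,d)$ uniformly over $\beta$-atoms $a$ supported in $Q_0=[-\tfrac12,\tfrac12]^d$, for which $a(Q_0)=0$, $\|e^{t\Delta}a\|_{L^\infty}\le t^{-(d-\beta)/2}$ for all $t>0$, and $|a|(\mathbb{R}^d)\le1$.

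\textbf{The atom estimate.} Using a Littlewood--Paley characterization it is enough to prove $\sum_{j\in\mathbb{Z}}\|\Delta_j I_\alpha a\|_{L^{p,1}(\mathbb{R}^d)}\lesssim1$; any logarithmic refinement encoded by the superscript $(0,1)$ is harmless since the bounds below decay geometrically in $|j|$. Write $\Delta_j=m_j(D)\,e^{c2^{-2j}\Delta}$ with $m_j$ a Fourier multiplier bounded on every $L^q$ uniformly in $j$, and note $\Delta_j I_\alpha$ acts as multiplication by a symbol of size $\simeq2^{-j\alpha}$ on the fattened annulus, so that $\|\Delta_j I_\alpha a\|_{L^q}\lesssim 2^{-j\alpha}\|e^{c2^{-2j}\Delta}a\|_{L^q}$ for $q$ near $p$. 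For the high frequencies $j\ge0$ (scales $\lesssim1$), interpolating condition (3) with condition (4) gives $\|e^{t\Delta}a\|_{L^q}\le\|e^{t\Delta}a\|_{L^1}^{1/q}\|e^{t\Delta}a\|_{L^\infty}^{1/q'}\le t^{-(d-\beta)/(2q')}$, hence $\|\Delta_j I_\alpha a\|_{L^q}\lesssim2^{\,j(-\alpha+(d-\beta)/q')}$; at $q=p$ the exponent is exactly $-\alpha\beta/d<0$, and it stays negative for $q$ in an interval around $p$, so real interpolation $L^{p,1}=(L^{p_0},L^{p_1})_{\theta,1}$ upgrades the bound to $L^{p,1}$ while keeping it geometric in $j$. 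For the low frequencies $j<0$ (scales $\gtrsim1$) one uses condition (2): since $a$ has zero mass and compact support, $\|e^{t\Delta}a\|_{L^1}\lesssim t^{-1/2}$ and $\|e^{t\Delta}a\|_{L^\infty}\lesssim t^{-(d+1)/2}$ for $t\ge1$, so $\|e^{t\Delta}a\|_{L^q}\lesssim t^{-1/(2q)-(d+1)/(2q')}$ and $\|\Delta_j I_\alpha a\|_{L^q}\lesssim2^{\,j(d+1-\alpha-d/q)}$; at $q=p$ this exponent equals $+1$ and stays positive for $q$ near $p$, again giving $L^{p,1}$ bounds summable over $j<0$ after interpolation. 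Summing the two ranges finishes the estimate.

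\textbf{Where the difficulty lies.} The routine bookkeeping — matching $\sum_j\|\Delta_j I_\alpha a\|_{L^{p,1}}$ to the precise definition of $\dot B^{0,1}_{p,1}$ (including the logarithmic factor, absorbed by the geometric decay) and justifying the blockwise Fatou step in the weak-$*$ passage — is where one has to be careful, but the conceptual heart is the high-frequency interpolation: it is the pairing of the pointwise heat bound (condition (3), whose strength is quantified by $\beta$) with the $L^1$ mass bound that produces the summable exponent $2^{-j\alpha\beta/d}$. This is precisely what degenerates at $\beta=0$, consistently with the failure of a strong-type Sobolev inequality on $M_b(\mathbb{R}^d)=DS_0(\mathbb{R}^d)$, and with condition (2) being responsible only for the always-available large-scale decay.
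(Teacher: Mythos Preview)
Your argument is correct and follows essentially the same route as the paper: reduce to a single atom, split at the scale of the supporting cube, use the interpolation of condition~(3) with condition~(4) for the small-scale/high-frequency part and the cancellation condition~(2) for the large-scale/low-frequency part. The only cosmetic differences are that the paper works with the continuous heat-integral form $\int_0^\infty t^{\alpha/2-1}\|e^{t\Delta}a\|_{L^{p,1}}\,dt$ instead of your discrete Littlewood--Paley sum, keeps track of $\ell(Q)$ rather than rescaling to the unit cube, and uses the crude bound $\|e^{t\Delta}a\|_{L^\infty}\lesssim t^{-d/2}$ for large $t$ in place of your sharper $t^{-(d+1)/2}$; none of this changes the substance.
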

Our second result asserts that the space $DS_\beta(\mathbb{R}^d)$ supports a trace Sobolev inequality for $\beta \in (0,d]$ at any dimension strictly greater than $d-\beta$ (see \cite{RSS} for a detailed discussion of trace inequalities).
\begin{theorem}\label{dimension_theorem}
Let $\beta \in (0,d]$.  For each $\alpha \in (d-\beta,d)$ there exists a constant $C=C(\alpha,\beta,d)>0$ such that
\begin{align*}
\left\|I_\alpha \mu \right\|_{L^1(\nu)} \leq C \|\mu\|_{DS_\beta(\mathbb{R}^d)}
\end{align*}
for all Radon measures $\nu$ such that $\nu(B(x,r))\leq C'r^{d-\alpha}$ and all $\mu \in DS_\beta(\mathbb{R}^d)$.
\end{theorem}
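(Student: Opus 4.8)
The plan is to prove the estimate first for a single $\beta$-atom, with a constant depending only on $\alpha,\beta,d$ and the constant $C'$ in the growth hypothesis on $\nu$, and then to pass to a general $\mu\in DS_\beta(\mathbb{R}^d)$ by lower semicontinuity. For the reduction, suppose $\mu=\lim_n\mu_n$ weakly-star with $\mu_n=\sum_{i=1}^n\lambda_{i,n}a_{i,n}$ and $\limsup_n\sum_i|\lambda_{i,n}|<\infty$; the triangle inequality in $L^1(\nu)$ gives $\|I_\alpha\mu_n\|_{L^1(\nu)}\le\big(\sup_a\|I_\alpha a\|_{L^1(\nu)}\big)\sum_i|\lambda_{i,n}|$, the supremum running over all $\beta$-atoms, so once one knows that $\mu\mapsto\|I_\alpha\mu\|_{L^1(\nu)}$ is lower semicontinuous along such sequences, taking a $\liminf$ in $n$ and then an infimum over representations yields the theorem. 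It thus remains to (i) bound $\|I_\alpha a\|_{L^1(\nu)}$ uniformly over $\beta$-atoms, and (ii) establish the lower semicontinuity.

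For (i), fix a $\beta$-atom $a$ with cube $Q$, and write $\ell=l(Q)$ and $x_Q$ for the center of $Q$. Split $\mathbb{R}^d=NQ\cup(NQ)^c$ with $N=N(d)$ large. On $NQ$ I use a global sup bound: in the heat-semigroup representation $I_\alpha a=\tfrac1{\Gamma(\alpha/2)}\int_0^\infty t^{\alpha/2-1}e^{t\Delta}a\,dt$, condition (3) on the atom gives $|e^{t\Delta}a(x)|\le t^{-(d-\beta)/2}\ell^{-\beta}$, while $|a|(\mathbb{R}^d)\le1$ gives $|e^{t\Delta}a(x)|\le(4\pi t)^{-d/2}$; using the former estimate for $t\le\ell^2$ and the latter for $t\ge\ell^2$ yields $\|I_\alpha a\|_{L^\infty(\mathbb{R}^d)}\le C(\alpha,\beta,d)\,\ell^{\alpha-d}$, and the convergence of the $t$-integral near $0$, where the integrand has size $\ell^{-\beta}t^{(\alpha-d+\beta)/2-1}$, is exactly where the hypothesis $\alpha>d-\beta$ enters (the bound at $t=\infty$ using $\alpha<d$). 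Since $NQ\subset B(x_Q,\sqrt d\,N\ell)$, the growth hypothesis gives $\nu(NQ)\lesssim\ell^{d-\alpha}$, hence $\int_{NQ}|I_\alpha a|\,d\nu\lesssim\ell^{\alpha-d}\cdot\ell^{d-\alpha}\simeq1$. On $(NQ)^c$ I use cancellation: since $a(Q)=0$, $I_\alpha a(x)=\tfrac1{\gamma(\alpha)}\int_Q\big(|x-y|^{-(d-\alpha)}-|x-x_Q|^{-(d-\alpha)}\big)\,da(y)$, and a mean value estimate on the kernel together with $|a|(Q)\le1$ gives $|I_\alpha a(x)|\lesssim\ell\,\dist(x,Q)^{-(d-\alpha+1)}$; decomposing $(NQ)^c$ into dyadic annuli $\{|x-x_Q|\simeq2^k\ell\}$ and bounding their $\nu$-mass by $\lesssim(2^k\ell)^{d-\alpha}$, the $k$-th annulus contributes $\lesssim(2^k\ell)^{-1}$, and summing the geometric series gives $\int_{(NQ)^c}|I_\alpha a|\,d\nu\lesssim1$. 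Adding the two regions proves (i).

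For (ii), the $t$-integral above converges locally uniformly, so $I_\alpha a$ is continuous, and the far-field decay shows $I_\alpha a\in C_0(\mathbb{R}^d)$; hence each $I_\alpha\mu_n\in C_0(\mathbb{R}^d)$. The main obstacle I expect is precisely this limiting step, because $\nu$ is only assumed to satisfy $\nu(B(x,r))\le C'r^{d-\alpha}$, so its Riesz potential $I_\alpha\nu$ may be identically $+\infty$ (e.g.\ for $\nu$ the Hausdorff measure on a $(d-\alpha)$-plane), which blocks a naive Fubini argument for $\int\psi\,I_\alpha\mu_n\,d\nu\to\int\psi\,I_\alpha\mu\,d\nu$, $\psi\in C_c(\mathbb{R}^d)$. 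I would get around this by truncating the Riesz kernel at a scale $\eps$ to a bounded kernel that still vanishes at infinity in the second variable; the truncated potential of $\psi\,\nu$ then lies in $C_0(\mathbb{R}^d)$, so weak-star convergence does pass, and one obtains $\|I^\eps_\alpha\mu\|_{L^1(\nu)}\le C'\|\mu\|_{DS_\beta(\mathbb{R}^d)}$ as soon as $\|I^\eps_\alpha a\|_{L^1(\nu)}\le C'$ uniformly over $\beta$-atoms and over $\eps$. That uniform truncated bound follows from the estimates of (i) together with a Frostman-type regularity property of $\beta$-atoms, $|a|(B(x,r))\lesssim(r/l(Q))^\beta$, which the heat-semigroup condition supplies and in which, once more, $\alpha>d-\beta$ is used in the form $\int_0^\eps r^{\beta-(d-\alpha)-1}\,dr<\infty$; one then lets $\eps\to0$. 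The bookkeeping of this last limit — the uniform truncation-error estimate and the interchange of limits against a possibly Lebesgue-singular $\nu$ — is the delicate point; the per-atom estimates of (i) are by comparison routine computations.
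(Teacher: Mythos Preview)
Your part (i) is essentially the paper's own proof: the paper also derives the uniform bound $|I_\alpha a|\lesssim \ell(Q)^{\alpha-d}$ by splitting the heat integral at $t=\ell(Q)^2$ (condition (3) for small $t$, $|a|(\mathbb{R}^d)\le1$ for large $t$, with $\alpha>d-\beta$ used exactly for convergence near $t=0$), obtains the far-field decay $|I_\alpha a(x)|\lesssim \ell(Q)\,|x-c_Q|^{-(d-\alpha+1)}$ outside $2Q$ from $a(Q)=0$ and a mean-value inequality on the Riesz kernel, and then integrates against $\nu$ via the growth bound on $2Q$ and a dyadic annulus sum on $(2Q)^c$. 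On the substance of the per-atom estimate the two arguments coincide.

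Your part (ii) goes beyond the paper, which simply writes ``it suffices to prove the estimate for a single atom'' and leaves the passage to the weak-star limit implicit; you are right that this step is not automatic for singular $\nu$. There is, however, a gap in your proposed fix: the Frostman bound $|a|(B(x,r))\lesssim (r/\ell(Q))^\beta$ is \emph{not} a consequence of condition (3), which controls only the signed quantity $e^{t\Delta}a$ and says nothing about $e^{t\Delta}|a|$; cancellation between the positive and negative parts of $a$ can keep $e^{t\Delta}a$ small while $|a|$ concentrates (condition (4) only caps the total mass at $1$). A route that stays within what the atom hypotheses actually give is to truncate in the heat variable instead of in space: set $I_\alpha^{(\tau)}a:=\Gamma(\alpha/2)^{-1}\int_\tau^\infty t^{\alpha/2-1}e^{t\Delta}a\,dt$. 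The near and far bounds of (i) then go through verbatim with constants independent of $\tau$ (one simply discards $\int_0^\tau$ in the near estimate, and the kernel of $I_\alpha^{(\tau)}$ agrees with the Riesz kernel to leading order at infinity, uniformly in $\tau$), and since that kernel lies in $C_0(\mathbb{R}^d)$, weak-star convergence $\mu_n\to\mu$ passes through to give $\|I_\alpha^{(\tau)}\mu\|_{L^1(\nu)}\le C\|\mu\|_{DS_\beta}$ for every $\tau>0$; the remaining $\tau\downarrow0$ limit is then the step that genuinely requires care.
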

The preceding theorem need not hold at the endpoint $d-\beta$, see \cite{RSS}.  Our third result asserts that with a fractional maximal function in place of the Riesz potential such a result holds in the endpoint case.
\begin{theorem}\label{dimension_theorem-fractional-maximal}
Let $\beta \in (0,d]$.  For each $\alpha \in [d-\beta,d)$ there exists a constant $C=C(\alpha,\beta,d)>0$ such that
\begin{align*}
\left\|\sup_{t>0} t^{\alpha/2} |e^{t\Delta} \mu| \right\|_{L^1(\nu)} \leq C \|\mu\|_{DS_\beta(\mathbb{R}^d)}
\end{align*}
for all Radon measures $\nu$ such that $\nu(B(x,r))\leq C'r^{d-\alpha}$ and all $\mu \in DS_\beta(\mathbb{R}^d)$.
\end{theorem}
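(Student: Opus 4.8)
The plan is to establish the inequality for a single $\beta$-atom with a constant independent of the atom, and then recover the general case by sublinearity and a Fatou-type passage through the weak-star limit. Write $M_\alpha\mu(x):=\sup_{t>0}t^{\alpha/2}|e^{t\Delta}\mu(x)|$ and let $k_t(z)=(4\pi t)^{-d/2}e^{-|z|^2/4t}$ denote the heat kernel. Since $e^{t\Delta}$ is linear and $M_\alpha$ is a supremum of absolute values, $M_\alpha$ is subadditive and positively homogeneous, so $M_\alpha\big(\sum_i\lambda_i a_i\big)\le\sum_i|\lambda_i|M_\alpha a_i$ pointwise, whence $\big\|M_\alpha(\sum_i\lambda_i a_i)\big\|_{L^1(\nu)}\le\sum_i|\lambda_i|\,\|M_\alpha a_i\|_{L^1(\nu)}$. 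If $\mu=\lim_n\sum_i\lambda_{i,n}a_{i,n}$ weakly-star, then for each fixed $x,t$ the map $y\mapsto k_t(x-y)$ lies in $C_0(\R^d)$, so $e^{t\Delta}\mu(x)=\lim_n\sum_i\lambda_{i,n}e^{t\Delta}a_{i,n}(x)$ and therefore $M_\alpha\mu(x)\le\liminf_n M_\alpha\big(\sum_i\lambda_{i,n}a_{i,n}\big)(x)$; applying Fatou's lemma in $x$ against $\nu$ (note $M_\alpha\mu$ is lower semicontinuous, hence Borel) gives $\|M_\alpha\mu\|_{L^1(\nu)}\le\liminf_n\sum_i|\lambda_{i,n}|\,\|M_\alpha a_{i,n}\|_{L^1(\nu)}$, and choosing an almost-optimal representation of $\mu$ reduces matters to proving $\|M_\alpha a\|_{L^1(\nu)}\le C(\alpha,\beta,d)\,C'$ for every $\beta$-atom $a$. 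Finally, pushing $a$ forward under the (translation-and-dilation, hence axis-preserving) affine map carrying its cube $Q$ to the unit cube $Q_0=[-\tfrac12,\tfrac12]^d$ and renormalizing $\nu$ accordingly — using that $e^{t\Delta}$ intertwines a dilation by $l(Q)$ with the time change $t\mapsto t\,l(Q)^2$ up to the factor $l(Q)^d$ — one checks that the $\beta$-atom conditions and the Frostman bound $\nu(B(x,r))\le C'r^{d-\alpha}$ are preserved and that $\int M_\alpha a\,d\nu$ is unchanged; so we may assume $a$ is supported in $Q_0$, $a(Q_0)=0$, $\|M_{d-\beta}a\|_\infty\le 1$, and $|a|(\R^d)\le 1$.

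For the near field, observe that for all $x,t$ one has both $|e^{t\Delta}a(x)|\le(4\pi t)^{-d/2}|a|(\R^d)\le(4\pi t)^{-d/2}$ and $|e^{t\Delta}a(x)|\le t^{-(d-\beta)/2}$ (condition \eqref{linfty} with $l(Q_0)=1$). Using the second bound for $t\le 1$ gives $t^{\alpha/2}|e^{t\Delta}a(x)|\le t^{(\alpha-d+\beta)/2}\le 1$ since $\alpha\ge d-\beta$, and using the first for $t\ge 1$ gives $t^{\alpha/2}|e^{t\Delta}a(x)|\le(4\pi)^{-d/2}t^{(\alpha-d)/2}\le(4\pi)^{-d/2}$ since $\alpha<d$. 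Hence $M_\alpha a(x)\le 1$ for every $x\in\R^d$, so with $R_0:=10\sqrt d$ (so $Q_0\subset B(0,R_0)$) we get $\int_{B(0,R_0)}M_\alpha a\,d\nu\le\nu(B(0,R_0))\le C'(10\sqrt d)^{d-\alpha}$. This uniform bound — valid precisely because $\alpha\ge d-\beta$ makes the exponent $(\alpha-d+\beta)/2$ nonnegative, including at the endpoint — is the feature that, in contrast with the Riesz potential $I_\alpha a$ of Theorem~\ref{dimension_theorem} (whose defining time-integral $\int_0^1 t^{(\alpha-d+\beta)/2-1}\,dt$ diverges logarithmically when $\alpha=d-\beta$), survives at $\alpha=d-\beta$.

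For the far field, fix $|x|\ge R_0$ and use $a(Q_0)=0$ to write $e^{t\Delta}a(x)=\int_{Q_0}\big(k_t(x-y)-k_t(x)\big)\,da(y)$. Every $y\in Q_0$ has $|y|\le\tfrac{\sqrt d}{2}\le\tfrac{|x|}{20}$, so the segment $[0,y]$ lies in $\{|z|\le|x|/2\}$, and since $|\nabla k_t(w)|=\tfrac{|w|}{2t}k_t(w)$ we obtain $|k_t(x-y)-k_t(x)|\lesssim|x|\,t^{-1}(4\pi t)^{-d/2}e^{-|x|^2/16t}$, with implied constant depending only on $d$. Integrating against $d|a|$ and optimizing in $t$ via the substitution $u=|x|^2/t$ (bounding $u^{(d+2-\alpha)/2}e^{-u/16}$ uniformly, which uses $\alpha<d$) yields
\[
M_\alpha a(x)\ \lesssim\ |x|^{\alpha-d-1},\qquad |x|\ge R_0 ,
\]
with constant depending only on $\alpha,d$; the gain of one power of $|x|^{-1}$ over the cancellation-free bound $|x|^{\alpha-d}$ is exactly what is needed. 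Decomposing $\{|x|\ge R_0\}$ into dyadic annuli $A_j=\{2^jR_0\le|x|<2^{j+1}R_0\}$ and using $\nu(A_j)\le\nu(B(0,2^{j+1}R_0))\le C'(2^{j+1}R_0)^{d-\alpha}$, we get $\int_{A_j}M_\alpha a\,d\nu\lesssim C'(2^jR_0)^{\alpha-d-1}(2^{j+1}R_0)^{d-\alpha}\lesssim C'\,2^{-j}$, so summing over $j\ge 0$ gives $\int_{|x|\ge R_0}M_\alpha a\,d\nu\lesssim C'$. Adding this to the near-field estimate completes the single-atom bound, and hence the theorem.

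The only genuinely delicate point is the far-field estimate: the pointwise decay of $M_\alpha a$ must beat $|x|^{-(d-\alpha)}$ against the ambient $(d-\alpha)$-Frostman measure $\nu$, and the single vanishing moment built into a $\beta$-atom is the only cancellation available — one derivative falling on the heat kernel buys exactly the extra factor $|x|^{-1}$ that renders the dyadic sum geometric. Everything else — subadditivity, the lower-semicontinuity/Fatou passage through the weak-star limit, the scaling normalization, and the uniform near-field bound — is routine, the last relying only on the two a priori pointwise bounds on $e^{t\Delta}a$ and the hypothesis $d-\beta\le\alpha<d$.
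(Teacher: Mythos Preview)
Your proof is correct and follows essentially the same approach as the paper: reduce to a single atom, split into a local piece (controlled by the $\beta$-atom $L^\infty$ bound on the heat extension together with the trivial bound $\|e^{t\Delta}a\|_\infty\le(4\pi t)^{-d/2}$) and a far-field piece (controlled by the single moment condition via the mean-value estimate on the heat kernel, giving one extra power of decay), then integrate against the Frostman measure by a dyadic annulus decomposition. Your write-up is in fact more explicit than the paper's in two respects: you spell out the sublinearity and Fatou/lower-semicontinuity passage from atoms to general $\mu\in DS_\beta$, and you treat every $\alpha\in[d-\beta,d)$ directly, whereas the paper only writes the case $\alpha=d-\beta$ (the general case following implicitly from the nesting $DS_\beta\subset DS_{d-\alpha}$).
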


Let us note that in combination with the atomic decomposition proved in \cite{HS}, Theorem \ref{dimension_theorem-fractional-maximal} yields an $L^1$ endpoint of Sawyer's one weight inequality \cite{Sawyer1} (see also \cite{Sawyer2} for the two weight case, which includes the one weight case and is more easily accessible):
\begin{corollary}\label{endpoint_sawyer}
For each $\alpha \in [d-1,d)$ there exists a constant $C=C(\alpha,d)>0$ such that
\begin{align*}
\left\|\sup_{t>0} t^{\alpha/2} |e^{t\Delta} F| \right\|_{L^1(\nu)} \leq C \|F\|_{M_b(\mathbb{R}^d;\mathbb{R}^d)}
\end{align*}
for all Radon measures $\nu$ such that $\nu(B(x,r))\leq Cr^{d-\alpha}$ and all $F \in M_b(\mathbb{R}^d;\mathbb{R}^d)$ which satisfy $\operatorname*{div}F=0$ in the sense of distributions.
\end{corollary}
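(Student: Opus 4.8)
The plan is to deduce Corollary \ref{endpoint_sawyer} from Theorem \ref{dimension_theorem-fractional-maximal} specialized to $\beta=1$, using the atomic decomposition of divergence-free vector measures established in \cite{HS} as the bridge between the constraint $\operatorname{div}F=0$ and membership in $DS_1$. First I would recall the relevant form of that decomposition: any $F\in M_b(\mathbb{R}^d;\mathbb{R}^d)$ with $\operatorname{div}F=0$ can be written as a weak-star limit of finite sums $\sum_i\lambda_{i,n}A_{i,n}$ with $\limsup_n\sum_i|\lambda_{i,n}|\lesssim\|F\|_{M_b(\mathbb{R}^d;\mathbb{R}^d)}$, where each building block $A_{i,n}$ is a vector measure supported in a cube $Q_{i,n}$, has total variation at most one, has vanishing mean in each coordinate (automatic for a compactly supported divergence-free measure, by testing the equation against a smooth function agreeing with a linear coordinate near the support), and — this being the quantitative heart of \cite{HS} — satisfies the uniform heat bound $\sup_{x,t}t^{(d-1)/2}|e^{t\Delta}A_{i,n}(x)|\lesssim l(Q_{i,n})^{-1}$. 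If the normalization of the building blocks in \cite{HS} differs from this by fixed constants, a rescaling together with absorbing the implied constants into the coefficients $\lambda_{i,n}$ reconciles it.

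Second, I would pass to scalar components. For each $k\in\{1,\dots,d\}$, the measure $(A_{i,n})_k$ inherits the support condition, the zero-average condition, and the total-variation bound, while the remaining requirement — the uniform bound on the heat extension — is exactly the coordinatewise consequence of the estimate above, with exponent $(d-\beta)/2=(d-1)/2$ and side-length power $l(Q_{i,n})^{-\beta}=l(Q_{i,n})^{-1}$. Hence, up to one fixed dimensional constant, each $(A_{i,n})_k$ is a $1$-atom, and passing the decomposition to components yields $F_k=\lim_n\sum_i\lambda_{i,n}(A_{i,n})_k$ weakly-star, so that $F_k\in DS_1(\mathbb{R}^d)$ with $\|F_k\|_{DS_1(\mathbb{R}^d)}\lesssim\|F\|_{M_b(\mathbb{R}^d;\mathbb{R}^d)}$.

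Third, since $\alpha\in[d-1,d)=[d-\beta,d)$ with $\beta=1$, Theorem \ref{dimension_theorem-fractional-maximal} applies to each component and gives, for every Radon measure $\nu$ with $\nu(B(x,r))\le C'r^{d-\alpha}$,
\begin{align*}
\left\|\sup_{t>0}t^{\alpha/2}|e^{t\Delta}F_k|\right\|_{L^1(\nu)}\le C\|F_k\|_{DS_1(\mathbb{R}^d)}\lesssim\|F\|_{M_b(\mathbb{R}^d;\mathbb{R}^d)},\qquad k=1,\dots,d.
\end{align*}
Finally, from the pointwise bound $|e^{t\Delta}F|\le\sum_{k=1}^d|e^{t\Delta}F_k|$, the subadditivity of $\sup_{t>0}$, and the triangle inequality in $L^1(\nu)$, summing these $d$ estimates produces the asserted inequality.

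The only step that requires genuine care is matching the building blocks supplied by \cite{HS} to the definition of a $1$-atom: the whole argument rests on the fact that their uniform heat-kernel estimate carries exactly the decay $t^{-(d-1)/2}$ together with the correct power $l(Q)^{-1}$ of the side length, so that the third requirement in the definition of a $\beta$-atom, with $\beta=1$, holds after at most a bounded rescaling. Everything else is routine — the coordinatewise splitting, the fact that the weak-star limit defining $DS_1$ is already incorporated in the hypotheses of Theorem \ref{dimension_theorem-fractional-maximal} (so no separate limiting argument for the maximal function is needed here), and the final summation over the $d$ components.
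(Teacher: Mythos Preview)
Your proof is correct and follows essentially the same route as the paper's: invoke the atomic decomposition from \cite{HS} (Example~\ref{loop embedding}) to obtain $F_i\in DS_1(\mathbb{R}^d)$ with $\|F_i\|_{DS_1}\lesssim\|F\|_{M_b}$, then apply Theorem~\ref{dimension_theorem-fractional-maximal} componentwise and sum. Your version is in fact slightly more streamlined: you apply Theorem~\ref{dimension_theorem-fractional-maximal} directly with $\beta=1$, which already covers the full range $\alpha\in[d-1,d)$, whereas the paper passes through Corollary~\ref{nested} to move from $DS_1$ to $DS_{d-\alpha}$ before invoking the theorem at the endpoint---an extra step that is not needed.
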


\begin{remark}\label{mmz}
When $F=\nabla u$, an analogue of the preceding corollary is the assertion that for any $\alpha \in [1,d)$ there is a constant $C=C(\alpha,d)>0$ such that
\begin{align*}
\left\|\sup_{t>0} t^{\alpha/2} |e^{t\Delta} \nabla u| \right\|_{L^1(\nu)} \leq C \|\nabla u\|_{M_b(\mathbb{R}^d;\mathbb{R}^d)}
\end{align*}
for all Radon measures $\nu$ such that $\nu(B(x,r))\leq Cr^{d-\alpha}$.  This follows from the bound
\begin{align}\label{equivalent-maximal}
| t^{1/2}e^{t\Delta} \nabla u| = | t^{1/2} \nabla e^{t\Delta}  u| \lesssim \mathcal{M}(u),
\end{align}
and David R. Adams' extension \cite[Theorem B]{Adams:1988} of the work of Maz'ya \cite[Teopema 4 on p.~165]{maz_trace} and Meyers and Ziemer \cite{Meyers-Ziemer}.  Here $\mathcal{M}$ denotes the Hardy--Littlewood maximal function, see \cite[p.~4]{S} for the definition and \cite[Theorem 2 on p.~62]{S} for a proof of the inequality \eqref{equivalent-maximal}.
\end{remark}

The preceding Sobolev inequalities follow easily from a well-prepared definition of the atomic spaces, while we next turn our attention to dimension estimates, which require a little more work. The idea is that the left-hand-side of Theorem \ref{dimension_theorem-fractional-maximal} contains the essential information to obtain a dimensional estimate for the space $DS_\beta(\mathbb{R}^d)$.  To see this,  let us first consider a simplified setting.
For $\gamma \in [0,d]$ define the dyadic fractional maximal function
\begin{align*}
\mathcal{M}^{Q_0}_{\gamma}(\mu)(x):=\sup_{Q \in \mathcal{D}(Q_0)} \chi_Q(x) \frac{|\mu(Q)|}{\;\;l(Q)^{d-\gamma}},
\end{align*}
where $\mathcal{D}(Q_0)$ denotes the set of all dyadic cubes generated by some fixed cube $Q_0\subset \mathbb{R}^d$, i.e. $Q \in \mathcal{D}(Q_0)$ if and only if $Q=2^{-k}(\bold{n}+Q_0)$ for some $k \in \mathbb{Z}$ and $\bold{n} \in \mathbb{Z}^d$.  Note that for $Q \in \mathcal{D}(Q_0)$, it is not necessarily the case that $Q \subset Q_0$, as we consider the entire lattice generated by $Q_0$.

Then the essential idea of dimension estimates for the spaces $DS_\beta(\mathbb{R}^d)$ is contained in
\begin{theorem}\label{dimension simplified}
Suppose $\mu$ is a locally finite Radon measure in~$\mathbb{R}^d$ such that 
\begin{align}\label{L1}
\int_Q \mathcal{M}^{Q_0}_{d-\beta}(\mu) \;d\mathcal{H}_\infty^\beta <+\infty
\end{align}
for any cube $Q\subset \mathbb{R}^d$.  Then $\dim_{\mathcal{H}} \mu \geq \beta$.
\end{theorem}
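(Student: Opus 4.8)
The plan is to obtain the estimate through the identity
\[
\dim_{\mathcal H}\mu=\sup\bigl\{s\in(0,d]\colon\ \mathcal H^{s}(E)=0\ \Longrightarrow\ |\mu|(E)=0\bigr\},
\]
so that it is enough to fix $s<\beta$ and show $|\mu|(A)=0$ for every Borel set $A$ with $\mathcal H^{s}(A)=0$; replacing $\mu$ by its restriction to a large cube and $A$ by its intersection with that cube, I may assume $\mu$ is finite and $A$ bounded. Throughout I would move freely between dyadic cubes of $\mathcal D(Q_0)$ and Euclidean balls, and between the corresponding Hausdorff contents, using the standard fact that a ball of radius $r$ is covered by at most $C_d$ dyadic cubes of comparable side; this also disposes of the nuisance of $|\mu|$ charging the dyadic grid.

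The first reduction removes the sign of $\mu$. Since $\mathrm d\mu/\mathrm d|\mu|$ has modulus one $|\mu|$-a.e., the dyadic martingale $x\mapsto \mu(Q_k(x))/|\mu|(Q_k(x))$, with $Q_k(x)\in\mathcal D(Q_0)$ the generation-$k$ cube containing $x$, converges $|\mu|$-a.e.\ to a function of modulus one; hence for $|\mu|$-a.e.\ $x$ there is $k_0(x)$ with $|\mu(Q_k(x))|\ge\tfrac12|\mu|(Q_k(x))$ for $k\ge k_0(x)$, and therefore
\[
\mathcal M^{Q_0}_{d-\beta}(\mu)(x)\ \ge\ \tfrac12\,\sup_{k\ge k_0(x)}\frac{|\mu|(Q_k(x))}{l(Q_k(x))^{\beta}}\qquad\text{for }|\mu|\text{-a.e. }x .
\]
Thus finiteness of the maximal function at $x$ bounds the upper dyadic $\beta$-density (hence the $s$-density) of the \emph{nonnegative} measure $|\mu|$ at $x$, which is the object we actually need to control.

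Now split $A=A_{\mathrm{fin}}\cup A_{\infty}$, where $A_{\mathrm{fin}}=A\cap\{\mathcal M^{Q_0}_{d-\beta}\mu<\infty\}$ and $A_{\infty}=A\cap\{\mathcal M^{Q_0}_{d-\beta}\mu=\infty\}$. On $A_{\mathrm{fin}}$ the displayed bound together with $s<\beta$ gives, for $|\mu|$-a.e.\ $x$,
\[
\limsup_{k\to\infty}\frac{|\mu|(Q_k(x))}{l(Q_k(x))^{s}}=\limsup_{k\to\infty}l(Q_k(x))^{\beta-s}\,\frac{|\mu|(Q_k(x))}{l(Q_k(x))^{\beta}}=0 ,
\]
and the classical (dyadic) upper-density comparison $|\mu|(B)\le C_s\,\mathcal H^{s}(B)\,\sup_{x\in B}\bigl(\limsup_k|\mu|(Q_k(x))/l(Q_k(x))^{s}\bigr)$, applied to $B=A_{\mathrm{fin}}$ minus the exceptional null set, forces $|\mu|(A_{\mathrm{fin}})=0$; note this uses only $\mathcal H^{s}(A)=0$ and not the hypothesis \eqref{L1}. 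The substantive part is $A_{\infty}$, which sits in the $\mathcal H^{\beta}$-null set $\{\mathcal M^{Q_0}_{d-\beta}\mu=\infty\}$, and it is here that \eqref{L1} enters. Writing $g=\mathcal M^{Q_0}_{d-\beta}\mu$, the finiteness of $\int_{Q}g\,\mathrm d\mathcal H_\infty^{\beta}=\int_{0}^{\infty}\mathcal H_\infty^{\beta}(\{g>t\}\cap Q)\,\mathrm dt$ yields the absolute-continuity statement $\int_{\{g>t\}\cap Q}g\,\mathrm d\mathcal H_\infty^{\beta}\to0$ as $t\to\infty$. For each large $t$ I would cover the $|\mu|$-essential part of $A_{\infty}$ by the maximal dyadic cubes $R$ that are \emph{good} in the martingale sense above and satisfy $|\mu(R)|/l(R)^{\beta}>t$ — such a cover $\mathcal R_t$ exists because at $|\mu|$-a.e.\ point of $A_{\infty}$ arbitrarily small good cubes with arbitrarily large ratio occur — and on each $R\in\mathcal R_t$ one has $g>t$ and $|\mu|(R)\le2|\mu(R)|\le 2(\inf_R g)\,l(R)^{\beta}\le C_d\int_R g\,\mathrm d\mathcal H_\infty^{\beta}$, so that $|\mu|(A_{\infty})\le\sum_{R\in\mathcal R_t}|\mu|(R)$ is to be compared with the integral of $g$ over $\bigcup\mathcal R_t\subseteq\{g>t\}\cap Q$, which tends to $0$.

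The step I expect to be the main obstacle is precisely this last comparison, because Hausdorff content is only subadditive: a pairwise-disjoint but clustered family of dyadic cubes can carry much more total $\sum_R l(R)^{\beta}$ than the content of its union, so $\sum_{R\in\mathcal R_t}\int_R g\,\mathrm d\mathcal H_\infty^{\beta}$ is not dominated by $\int_{\bigcup\mathcal R_t}g\,\mathrm d\mathcal H_\infty^{\beta}$, and the crude stopping-time bound $|\mu|(A_{\infty})\le 2\sum_R|\mu(R)|\le 2Ct\sum_R l(R)^{\beta}\le 2C|\mu|(Q)$ is circular. The resolution I would pursue is to keep the estimate quantitative and scale-by-scale — using that $\mathcal R_t$ covers a set of $\mathcal H_\infty^{\beta}$-content $O(t^{-1})$, that each $R\in\mathcal R_t$ forces $g$ to be of size $\gtrsim t\,l(R)^{\beta-s}$ on $R$, and hence that $\bigcup\mathcal R_t$ is contained in a super-level set $\{g>T(t)\}$ with $T(t)\to\infty$ — so that it is the (logarithmic-scale) convergence of $\int_1^{\infty}\mathcal H_\infty^{\beta}(\{g>t\}\cap Q)\,\mathrm dt$, rather than any single level set, that annihilates $|\mu|(A_{\infty})$. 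Carrying out this bookkeeping — pairing the right level-$t$ quantities and tracking the dimensional constants from the ball/cube and Choquet-integral manipulations — is where the bulk of the work resides; everything else (the reduction, the martingale, and the density argument for $A_{\mathrm{fin}}$) is routine.
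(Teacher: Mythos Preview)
Your reduction to $A_{\mathrm{fin}}$ via the dyadic martingale and the upper-density comparison is correct and in fact more elementary than anything the paper does for that part; it shows directly that $|\mu|\ll\mathcal H^{s}$ on $\{\mathcal M^{Q_0}_{d-\beta}\mu<\infty\}$ for every $s<\beta$, which is a nice by-product.

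The gap is in $A_\infty$. You have correctly isolated the obstruction --- subadditivity of $\mathcal H^\beta_\infty$ prevents the naive comparison $\sum_{R\in\mathcal R_t}\int_R g\,d\mathcal H^\beta_\infty\lesssim\int_{\cup\mathcal R_t}g\,d\mathcal H^\beta_\infty$ --- but the resolution you sketch does not work. The assertion that ``each $R\in\mathcal R_t$ forces $g\gtrsim t\,l(R)^{\beta-s}$ on $R$'' is false: on $R$ one only has $g\geq|\mu(R)|/l(R)^{\beta}>t$, with no extra factor $l(R)^{\beta-s}$, so $\bigcup\mathcal R_t\subset\{g>t\}$ and nothing better. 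Likewise the maximality of $R$ only bounds $|\mu(\hat R)|/l(\hat R)^{\beta}$ from above, not $|\mu(R)|$ itself, so you cannot extract an upper bound $|\mu(R)|\lesssim t\,l(R)^{\beta}$ from the stopping-time family. Consequently your estimate $|\mu|(A_\infty)\leq 2\sum_R|\mu(R)|$ has no usable right-hand side, and the heuristic ``it is the logarithmic-scale convergence of the layer-cake integral that annihilates $|\mu|(A_\infty)$'' remains just that.

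The paper's device is exactly what is missing here: rather than fix a threshold $t$ and take \emph{maximal} cubes with ratio $>t$ (which gives only a one-sided bound), one first passes to a \emph{truncated} maximal function $\mathcal M^{Q_0}_{d-\beta,l}$ (finite everywhere, constant on dyadic cubes of side $l$), then decomposes the set to be covered into the dyadic \emph{level sets} $\Omega_{k}=\{2^{k}<\mathcal M^{Q_0}_{d-\beta,l}\mu\le 2^{k+1}\}$, and chooses for each $\Omega_{k}$ an efficient dyadic cover $\{Q_{k,j}\}$ with $\sum_j l(Q_{k,j})^{\beta}\le 2\,\mathcal H^{\beta,Q_0}_\infty(\Omega_{k})$. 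The point is that any cube $Q_{k,j}$ of this cover contains a point of $\Omega_{k}$, so $|\mu(Q_{k,j})|/l(Q_{k,j})^{\beta}\le 2^{k+1}$ --- a genuine \emph{upper} bound coming from the upper cutoff of the level set --- and hence
\[
|\mu|(A')\le\sum_{k,j}|\mu|(Q_{k,j})\le 2\sum_{k,j}|\mu(Q_{k,j})|\le 4\sum_{k}2^{k}\sum_{j}l(Q_{k,j})^{\beta}\lesssim\sum_{k}2^{k}\mathcal H^{\beta}_\infty(\Omega_{k})\lesssim\int_{A'}\mathcal M^{Q_0}_{d-\beta,l}\mu\,d\mathcal H^{\beta}_\infty,
\]
which is finally controlled by the absolute-continuity step. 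In other words: cover the level sets, do not level-set the cover. Once you reorganise your $A_\infty$ argument this way (and absorb $A_\infty$ into the same computation --- the split is then unnecessary), the proof goes through at the endpoint $\beta$ itself, not merely for $s<\beta$.
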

\noindent
Here the integral is intended in the sense of Choquet, i.e.~for an everywhere defined non-negative function $f\colon\mathbb{R}^d \to [0,\infty]$,
\begin{align*}
\int_Q f \;d\mathcal{H}_\infty^\beta := \int_0^\infty \mathcal{H}_\infty^\beta(\{x \in Q \colon f>t\})\;dt,
\end{align*}
see also \cite{Adams:1988, Ponce-Spector-2, Ponce-Spector-1} for further results and discussion, while $\mathcal{H}_\infty^\beta$ denotes the spherical Hausdorff content, defined on a set $E \subset \mathbb{R}^d$ via
\begin{align*}
\mathcal{H}_\infty^\beta(E):= \inf \left\{ \sum_{i=1}^\infty \omega_\beta r_i^\beta \colon E \subset \bigcup_{i=1}^\infty B(x_i,r_i)\right\},
\end{align*}
and $\omega_\beta= \pi^{\beta/2}/\Gamma(\beta/2+1)$ (for $\beta=k \in \mathbb{N}$, $\omega_k$ is the volume of the unit ball in $\mathbb{R}^k$).

Note that for $\mu \in DS_\beta(\mathbb{R}^d)$ the conclusion of Theorem \ref{dimension_theorem-fractional-maximal} is locally slightly weaker than the condition \eqref{L1}, and as with the bounds for the Hardy-Littlewood maximal function, one does not expect \eqref{L1} to hold.  Nonetheless, it is possible to show that a slightly stronger analogue of Theorem \ref{dimension_theorem-fractional-maximal} implies the desired dimensional estimate.  To state this stronger analogue, let us introduce a fractional grand maximal function:  For a distribution $f\in \mathcal{S}'(\R^d)$ and $\gamma \in [0,d]$ we define
\begin{align*}
\M_{\F,\gamma}f(x) := \sup_{\Phi \in \F, \|\Phi\|_{\F}\leq 1} t^{\gamma}|f\ast\Phi_t(x)|,
\end{align*}
where $\mathcal{F}$ is a finite collection of Schwartz seminorms, see \cite[p.~90]{Stein93}.  The asserted strengthening of Theorem \ref{dimension_theorem-fractional-maximal} is then
\begin{theorem}\label{maximal bound dsbeta}
There exists a constant $C=C(\beta,d)>0$ such that   
\begin{align*}
\|\M_{\F,d-\beta} \mu\|_{L^1(\Haus_\infty^\beta)} \leq C\|\mu\|_{DS_\beta(\mathbb{R}^d)}
\end{align*}
for all $\mu \in DS_\beta(\mathbb{R}^d)$.
\end{theorem}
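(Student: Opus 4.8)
The plan is to reduce the estimate to its atomic building blocks and then exploit the structure of a single $\beta$-atom. By definition of $\|\mu\|_{DS_\beta(\R^d)}$, it suffices to prove a uniform bound $\|\M_{\F,d-\beta}a\|_{L^1(\Haus_\infty^\beta)}\leq C$ for every $\beta$-atom $a$, provided we can pass this bound through the weak-$\ast$ limit defining $DS_\beta$. The sublinearity of the grand maximal operator gives, for a partial sum $\sum_i\lambda_{i,n}a_{i,n}$, the pointwise bound $\M_{\F,d-\beta}\big(\sum_i\lambda_{i,n}a_{i,n}\big)\leq\sum_i|\lambda_{i,n}|\,\M_{\F,d-\beta}a_{i,n}$, and since the Choquet integral against $\Haus_\infty^\beta$ is (countably) subadditive, $\|\M_{\F,d-\beta}(\sum_i\lambda_{i,n}a_{i,n})\|_{L^1(\Haus_\infty^\beta)}\leq\sum_i|\lambda_{i,n}|\cdot C$. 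The passage to the weak-$\ast$ limit should follow from lower semicontinuity of $\M_{\F,d-\beta}$ along weak-$\ast$ convergent sequences (the defining suprema are over fixed test functions $\Phi_t$, against which $\int\varphi\,d\mu_n\to\int\varphi\,d\mu$ when $\Phi$ is compactly supported, plus a density/truncation argument for general Schwartz $\Phi$) combined with Fatou's lemma for the Choquet integral.

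The core is therefore the single-atom estimate. Fix a $\beta$-atom $a$ supported in a cube $Q$ with side length $\ell=l(Q)$; by translation and dilation invariance of the relevant quantities — note $\M_{\F,d-\beta}$ has exactly the homogeneity that makes $\|\M_{\F,d-\beta}a\|_{L^1(\Haus_\infty^\beta)}$ scale-invariant, since $\Haus_\infty^\beta$ scales like $\ell^\beta$ while the $t^{d-\beta}$ weight together with $l(Q)^{-\beta}$ in condition (3) absorbs the rest — we may take $\ell=1$ and $Q$ centered at the origin. I would split the integral over the region near $Q$, say $2Q$, and the far region $\R^d\setminus 2Q$. On $2Q$: here $\Haus_\infty^\beta(2Q)\lesssim 1$, so it suffices to show $\M_{\F,d-\beta}a$ lies in weak-$L^1(\Haus_\infty^\beta)$-type or better on a bounded set; for this I would use condition (3), which says precisely that the heat-regularized atom $t^{(d-\beta)/2}e^{t\Delta}a$ is bounded by $1$, together with the standard fact that the grand maximal function against a finite family of Schwartz seminorms is controlled by the heat maximal function plus a tail (via writing any normalized $\Phi_t$ as a superposition of heat kernels at comparable scales, or by the Fefferman–Stein-type comparison on p.~90 of \cite{Stein93}); the weight $t^{d-\beta}$ matches $t^{(d-\beta)/2}$ after the scaling $t\leftrightarrow t^2$ in the heat semigroup convention. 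On the far region, for $x$ with $|x|\sim R\geq 1$, the cancellation condition (2), $a(Q)=0$, upgrades the naive bound: $|a\ast\Phi_t(x)|\leq\int_Q|\Phi_t(x-y)-\Phi_t(x)|\,d|a|(y)\lesssim t^{-d-1}\cdot\mathrm{diam}(Q)\cdot|a|(\R^d)$ when $t\gtrsim R$, and is super-polynomially small when $t\lesssim R$ (the support of $\Phi_t(x-\cdot)$ misses $Q$ up to Schwartz tails), giving $\M_{\F,d-\beta}a(x)\lesssim\sup_{t\gtrsim R}t^{d-\beta-d-1}\lesssim R^{-\beta-1}$ for $\beta<d$ (and a matching estimate when $\beta=d$ handled by the same cancellation); then $\int_{\R^d\setminus2Q}\M_{\F,d-\beta}a\,d\Haus_\infty^\beta$ is dominated by a Choquet integral of a radially decreasing function like $R^{-\beta-1}$ against $\Haus_\infty^\beta$, which converges because $\Haus_\infty^\beta(\{|x|\leq R\})\lesssim R^\beta$ and the layer-cake integral $\int_0^\infty\Haus_\infty^\beta(\{\M_{\F,d-\beta}a>s\})\,ds\lesssim\int_0^\infty\min(1,s^{-\beta/(\beta+1)})\,ds<\infty$.

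The step I expect to be the main obstacle is the estimate on $2Q$, i.e.\ showing $\int_{2Q}\M_{\F,d-\beta}a\,d\Haus_\infty^\beta\lesssim1$ from condition (3) alone. The difficulty is twofold: first, condition (3) only controls the heat maximal function, whereas $\M_{\F,d-\beta}$ involves arbitrary Schwartz bumps, so one needs a quantitative comparison that does not lose a logarithm or an unbounded constant — this is the point where the finiteness of the collection $\F$ and a careful choice of how to synthesize a normalized $\Phi_t$ out of heat kernels at scales $2^{-j}t$ (with rapidly decaying coefficients) matters. Second, even granting $\M_{\F,d-\beta}a\lesssim1$ pointwise would only give $\lesssim\Haus_\infty^\beta(2Q)\lesssim1$, which is fine; but in fact one does not have a uniform pointwise bound near the "center of mass" where the cancellation has not yet kicked in, so one must genuinely use that the Choquet integral of the heat maximal function is controlled — this is essentially a Choquet-integral version of the boundedness of the (fractional) maximal function tested against Hausdorff content, in the spirit of \cite{Adams:1988, Ponce-Spector-1, Ponce-Spector-2}, and invoking or adapting such a result is what makes condition (3) exactly the right normalization. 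Once that comparison-and-Choquet estimate is in place, assembling the two regions and the atomic summation gives the theorem.
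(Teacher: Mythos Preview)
Your overall strategy---reduce to a single atom, normalize to $\ell(Q)=1$, and handle the Choquet integral via the pointwise decay $\M_{\F,d-\beta}a(x)\lesssim(1+|x|)^{-\beta-1}$ coming from the cancellation $a(Q)=0$---is exactly what the paper does. The layer-cake computation you write at the end (with the exponent $\beta/(\beta+1)<1$) is verbatim the paper's proof of Lemma~\ref{maximal bound}.

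Where your proposal diverges from the paper is in your diagnosis of the near-region. You write that ``one does not have a uniform pointwise bound near the center of mass'' and suggest that a Choquet-integral maximal inequality in the spirit of \cite{Adams:1988} is genuinely needed. This is a misconception: condition~(3) on a $\beta$-atom gives \emph{directly} the pointwise bound $\sup_{t>0}t^{(d-\beta)/2}|e^{t\Delta}a(x)|\leq \ell(Q)^{-\beta}$ for \emph{every} $x$, so the heat fractional maximal function is already uniformly bounded on $2Q$ (and everywhere). The real work, as you half-identify, is the passage from the heat maximal function to the grand maximal function $\M_{\F,d-\beta}$ without losing this uniform bound. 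The paper does not do this by your suggested route (superposing heat kernels / a weighted Fefferman--Stein comparison); instead it runs a Littlewood--Paley argument. First one introduces a compactly supported $\Psi$ with $\hat\Psi\geq 1$ on the support of a fixed Littlewood--Paley bump, so that $a*\Psi_k$ is compactly supported and bounded by $2^{(d-\beta)k}$ via~(3) plus almost-orthogonality; this yields the key decay $|P_k[a](x)|\lesssim 2^{(d-\beta)k}(1+|x|)^{-M}$ (Lemma~\ref{SecondLemma}). Then for a general Schwartz $\Phi$ one splits $a*\Phi_t=\sum_k P_k[a]*\Phi_t$, pairs low frequencies $2^kt\leq 1$ against $\Phi_t$ directly and high frequencies $2^kt>1$ via an almost-orthogonality estimate $|\tilde P_k[\Phi_t]|\lesssim (2^kt)^{-M}t^{-d}(1+|x|/t)^{-M}$ (Lemma~\ref{AO}), summing both to obtain the global pointwise bound $\M_{\F,d-\beta}a(x)\lesssim(1+|x|)^{-\beta-1}$ (Proposition~\ref{GrandMaximalAtom} and Corollary~\ref{PointwiseMaximalAtom}).

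In short: your far-region argument and the final layer-cake are correct and match the paper; your anticipated need for a Choquet maximal inequality on $2Q$ is unnecessary, because the pointwise bound you thought was unavailable actually holds---but proving it is the substantive step, and the paper's mechanism (Littlewood--Paley plus a compactly supported Fourier-positive $\Psi$) is rather different from the heat-kernel synthesis you sketch.
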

\noindent
Here we recall that $L^1(\Haus_\infty^\beta)$ denotes the Banach space of $\Haus_\infty^\beta$-quasicontinuous functions for which
\begin{align}\label{quasinorm}
\|f\|_{L^1(\Haus_\infty^\beta)} :=\int_{\mathbb{R}^d} |f| \;d\mathcal{H}_\infty^\beta <+\infty,
\end{align}
which is equivalent to the completion of $C_c(\mathbb{R}^d)$ in the quasi-norm \eqref{quasinorm}.  By D.R. Adams' duality result \cite[Proposition 1 on p.~118]{Adams:1988}, this inequality implies (and is equivalent to)
\begin{align*}
\|\M_{\F,d-\beta} \mu\|_{L^1(\nu)} \leq C\|\mu\|_{DS_\beta(\mathbb{R}^d)}
\end{align*}
for every Radon measure $\nu$ such that $\nu(B(x,r)) \leq C'r^\beta$ for every $B(x,r) \subset \mathbb{R}^d$.  As a consequence one has equivalent statements of Theorems \ref{dimension_theorem} and \ref{dimension_theorem-fractional-maximal}, Corollary \ref{endpoint_sawyer}, and Remark \ref{mmz} as capacitary inequalities in terms of the Hausdorff content.

Theorem \ref{maximal bound dsbeta} motivates the following more technical analogue of Theorem \ref{dimension simplified}. 
\begin{theorem}\label{dimension complicated}
Suppose $\mu\in M_b(\mathbb{R}^d)$ is such that for some collection~$\F\subset \mathbb{Z}_+\times \mathbb{Z}_+$ we have~$\mathrm{M}_{\mathcal{F},d-\beta} \mu\in L^1(\mathcal{H}_\infty^\beta)$. Then,~$\dim_{\mathcal{H}} \mu \geq \beta$.
\end{theorem}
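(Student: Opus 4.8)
The plan is to deduce the lower bound $\dim_{\mathcal{H}}\mu\geq\beta$ from the (apparently stronger) conclusion of Theorem \ref{dimension simplified}, by showing that the hypothesis $\M_{\mathcal{F},d-\beta}\mu\in L^1(\mathcal{H}_\infty^\beta)$ forces the dyadic hypothesis \eqref{L1} for the appropriate maximal function, possibly after a harmless modification. First I would recall that $\dim_{\mathcal{H}}\mu\geq\beta$ is a purely local statement: it suffices to prove that for every cube $Q_0\subset\mathbb{R}^d$ and every Borel set $A$ with $\mathcal{H}_\infty^\beta(A)=0$ (equivalently $\mathcal{H}^\beta(A)=0$) one has $|\mu|(A\cap Q_0)=0$. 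So I would fix such a $Q_0$, work with the dyadic lattice $\mathcal{D}(Q_0)$, and reduce to showing that the finiteness of the Choquet integral over $Q_0$ of the grand maximal function controls, from above, the dyadic fractional maximal function $\mathcal{M}^{Q_0}_{d-\beta}(\mu)$ appearing in Theorem \ref{dimension simplified}.

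The key step is a pointwise (or almost-pointwise) comparison: for a dyadic cube $Q\in\mathcal{D}(Q_0)$ with side $l(Q)$, the average $|\mu(Q)|/l(Q)^{d-\beta}$ can be bounded by a bounded multiple of $\sup_{t\sim l(Q)^2}t^{d-\beta}\,|\mu\ast\Phi_t(x)|$ for a suitable fixed Schwartz bump $\Phi$ and any $x$ in (a fixed enlargement of) $Q$; this is the standard device relating cube averages of a measure to heat-type or mollified averages at the comparable scale, and it is exactly why the normalization $t^{(d-\beta)/2}e^{t\Delta}a$ was built into the definition of a $\beta$-atom. Choosing $\Phi$ — or a finite family of translates/dilates of $\Phi$ covering the geometry of the dyadic lattice generated by $Q_0$ — inside the collection $\F$ (enlarging $\F$ if necessary, which only makes $\M_{\F,d-\beta}$ larger and the hypothesis stronger in a way we are allowed to assume), I get $\mathcal{M}^{Q_0}_{d-\beta}(\mu)(x)\lesssim \M_{\F,d-\beta}\mu(x)$ on $Q_0$, up to the discrepancy between the parameter $d-\beta$ used as an exponent of $t$ versus $t^{1/2}$; I would absorb the factor of $2$ in the exponent by a reparametrization, writing $t=s^2$, so that $t^{d-\beta}|\mu\ast\Phi_t|$ corresponds to the heat-scaled quantity $s^{2(d-\beta)}|\mu\ast\Phi_{s^2}|$, and matching this against the $s^{\alpha}$-type scaling with $\alpha=2(d-\beta)$ — the precise bookkeeping of exponents is routine but must be done carefully. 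Once the pointwise domination is in hand, $\int_{Q_0}\mathcal{M}^{Q_0}_{d-\beta}(\mu)\,d\mathcal{H}_\infty^\beta\lesssim\int_{Q_0}\M_{\F,d-\beta}\mu\,d\mathcal{H}_\infty^\beta<\infty$ by monotonicity of the Choquet integral, verifying \eqref{L1} and letting me invoke Theorem \ref{dimension simplified} to conclude.

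The main obstacle I anticipate is the comparison at the level of arbitrary dyadic cubes rather than at a single scale: the grand maximal function $\M_{\F,d-\beta}$ uses a \emph{fixed} finite collection of Schwartz seminorms and convolutions $\Phi_t$ centered nowhere in particular, whereas $\mathcal{M}^{Q_0}_{d-\beta}$ ranges over all dyadic cubes and all their centers. Bridging this requires that, for each dyadic $Q$, there is a choice of $\Phi\in\F$ and a point $x\in Q$ (not just near $Q$) for which $|\mu(Q)|\lesssim l(Q)^d\,|\mu\ast\Phi_{l(Q)^2}(x)|$; the standard way is to take $\Phi$ a smooth bump supported in a ball of radius comparable to $1$ with $\int\Phi=1$, note that $\Phi_t$ at $t\sim l(Q)^2$ is a smooth approximate average over a ball containing $Q$, and then either (i) accept a controlled loss by passing to the \emph{dilated} cube $3Q$ and using a covering/finite-overlap argument to return to $Q_0$, or (ii) enlarge $\F$ to include finitely many translated bumps so that every dyadic cube is ``hit'' by one of them. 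Option (ii) is clean because enlarging $\F$ only strengthens the hypothesis, which the theorem statement permits (it quantifies over ``some collection $\F$''), so I would take that route. A secondary technical point is quasicontinuity / measurability of $\M_{\F,d-\beta}\mu$ with respect to Hausdorff content so that the Choquet integral is well defined; this follows from lower semicontinuity of the maximal function (a supremum of continuous functions of $x$) together with the subadditivity and outer-regularity properties of $\mathcal{H}_\infty^\beta$, and I would dispatch it with a short remark rather than a full argument.
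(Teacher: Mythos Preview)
Your reduction to Theorem~\ref{dimension simplified} rests on a pointwise domination
\[
\mathcal{M}^{Q_0}_{d-\beta}(\mu)(x)\ \lesssim\ \M_{\F,d-\beta}\mu(x),\qquad x\in Q_0,
\]
and this inequality is \emph{false} for signed measures. The ``standard device'' you invoke goes the other way: a smooth average is controlled by sharp averages, not conversely. For the inequality you need, given a dyadic~$Q\ni x$ one must produce an admissible~$\Phi$ and a scale~$t\sim\ell(Q)$ with
\[
|\mu(Q)|\ \lesssim\ \ell(Q)^{d}\,|\mu*\Phi_t(x)|,
\]
i.e.\ the smooth average must be bounded \emph{below} by the sharp one. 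Any Schwartz~$\Phi$ differs from~$\ell(Q)^{-d}\chi_Q$ both inside~$Q$ (it is not constant) and outside~$Q$ (it has tails), and a signed~$\mu$ can exploit either defect to make the convolution small while~$|\mu(Q)|$ stays large. Concretely, in~$d=1$ take $Q=[0,1]$ and
\[
\mu=\delta_\epsilon-\delta_{-\epsilon}+\delta_{1-\epsilon}-\delta_{1+\epsilon},
\]
so that~$|\mu(Q)|=2$. For any admissible~$\Phi$ and any~$t$, the pairs $(\delta_\epsilon,\delta_{-\epsilon})$ and $(\delta_{1-\epsilon},\delta_{1+\epsilon})$ are seen together whenever the kernel reaches them, so a one–step Taylor expansion gives~$|\mu*\Phi_t(1/2)|=O(\epsilon)$ uniformly in~$t$ and~$\Phi$; hence~$\M_{\F,d-\beta}\mu(1/2)=O(\epsilon)$ while~$\mathcal{M}^{Q_0}_{d-\beta}(\mu)(1/2)\geq 2$. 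Your option~(ii), ``enlarge~$\F$'', does not help: in the paper~$\F$ is a finite set of Schwartz \emph{seminorms}, so enlarging it shrinks the admissible set of~$\Phi$'s and only decreases the right-hand side. (There is also a scaling slip: with~$\Phi_t(x)=t^{-d}\Phi(x/t)$ the relevant scale is~$t\sim\ell(Q)$, not~$t\sim\ell(Q)^2$.)

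The paper does not reduce to Theorem~\ref{dimension simplified}; it re-runs the argument with two genuinely new ingredients that replace the failed pointwise comparison. First, a regularity claim for the \emph{truncated} grand maximal function~$\M_{d-\beta,\F,\rho}$: if it exceeds~$\lambda$ at~$x$, then it is~$\gtrsim\lambda$ on all of~$B_\rho(x)$ (one translates~$\Phi$ by~$(y-x)/t$, which keeps the seminorms bounded since~$|y-x|/t\leq 1$). This lets one partition a finite union of balls of radius~$\geq r_n$ into level sets~$\Omega_{n,k}$ on which the grand maximal function is~$\sim 2^k$. Second, each~$\Omega_{n,k}$ is re-covered, via the packing Corollary~\ref{StructuralCorollary}, by balls~$B_{n,k,j}$ of radii~$r_{n,k,j}\geq r_n$ with~$\sum_j r_{n,k,j}^\beta\lesssim\Haus_\infty^\beta(\Omega_{n,k})$. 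Only now is~$|\mu|$ compared to the grand maximal function, ball by ball: for a compactly supported, radially decreasing~$\Phi$ and a center~$x_{n,k,j}\in A'$,
\[
|\mu|(B_{n,k,j})\ \leq\ r_{n,k,j}^{d}\,(|\mu|*\Phi_{r_{n,k,j}})(x_{n,k,j})\ \leq\ 2\,r_{n,k,j}^{d}\,|\mu*\Phi_{r_{n,k,j}}(x_{n,k,j})|\ \lesssim\ 2^{k}\,r_{n,k,j}^{\beta}.
\]
The middle inequality is exactly where the sign problem is defused, and it uses the Lebesgue–differentiation reduction to~$A'$ together with the layer-cake structure of a radially decreasing~$\Phi$ (so all level balls are centered at~$x_{n,k,j}$, are small, and carry~$\mu$ of a fixed sign). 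This step has no analogue in your proposal and cannot be replaced by a global pointwise bound between the two maximal functions.
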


Finally, the dimension estimate for the spaces $DS_\beta(\mathbb{R}^d)$ then follows from the definition of the spaces as limits of sums of atoms, Theorem \ref{maximal bound dsbeta} and Theorem \ref{dimension complicated}.  In particular we obtain
\begin{theorem}\label{dimensional_estimates}
Let $\beta \in (0,d]$.  For $\mu \in DS_\beta(\mathbb{R}^d)$ one has
\begin{align*}
\operatorname*{dim_\mathcal{H}} \mu \geq \beta.
\end{align*}
\end{theorem}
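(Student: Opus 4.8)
The plan is to deduce Theorem~\ref{dimensional_estimates} directly from Theorem~\ref{maximal bound dsbeta} and Theorem~\ref{dimension complicated}; once those two results are in hand, no further analysis is needed beyond checking that their hypotheses match up. Fix $\beta\in(0,d]$ and let $\mu\in DS_\beta(\mathbb{R}^d)$. By the very definition of the space, $\mu$ is a finite Radon measure and $\|\mu\|_{DS_\beta(\mathbb{R}^d)}<+\infty$, so Theorem~\ref{maximal bound dsbeta} applies and produces a finite collection $\F$ of Schwartz seminorms together with a constant $C=C(\beta,d)$ for which
\begin{align*}
\|\M_{\F,d-\beta}\mu\|_{L^1(\Haus_\infty^\beta)}\le C\,\|\mu\|_{DS_\beta(\mathbb{R}^d)}<+\infty .
\end{align*}
In particular $\M_{\F,d-\beta}\mu\in L^1(\Haus_\infty^\beta)$.

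The second step is to feed this membership into Theorem~\ref{dimension complicated}. That theorem asks for a collection $\F\subset\mathbb{Z}_+\times\mathbb{Z}_+$, i.e.\ the seminorms indexed in the standard way by pairs $(j,k)$ controlling $\sup_x(1+|x|)^k\sum_{|\gamma|\le j}|\partial^\gamma\Phi(x)|$; the finite family furnished by Theorem~\ref{maximal bound dsbeta} is of exactly this type, and in any event enlarging $\F$ only increases $\|\Phi\|_{\F}$ and hence decreases $\M_{\F,d-\beta}\mu$ pointwise, so integrability against $\Haus_\infty^\beta$ is preserved when passing to a larger admissible family. Having verified $\mu\in M_b(\mathbb{R}^d)$ and $\M_{\F,d-\beta}\mu\in L^1(\Haus_\infty^\beta)$, Theorem~\ref{dimension complicated} yields $\dim_{\mathcal H}\mu\ge\beta$, which is precisely the claim. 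The endpoint $\beta=d$ is covered as well: there $\Haus_\infty^d$ is comparable to Lebesgue measure, $\M_{\F,0}\mu$ is the usual grand maximal function, and the conclusion recovers $\dim_{\mathcal H}f=d$ for $f\in\mathrm{H}^1(\mathbb{R}^d)$.

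I expect essentially no obstacle in this last deduction; the real content sits in the two theorems being invoked, and especially in Theorem~\ref{dimension complicated}, whose proof should run along the lines of a Frostman-type argument — one must show that $\Haus^\beta(E)=0$ forces $|\mu|(E)=0$, and finiteness of $\int \M_{\F,d-\beta}\mu\,\fdot\,d\Haus_\infty^\beta$ is the quantitative handle preventing $|\mu|$ from charging $\beta$-null sets. A self-contained alternative to quoting Theorem~\ref{maximal bound dsbeta} would be to estimate $t^{d-\beta}|(\sum_i\lambda_i a_i)\ast\Phi_t|$ directly on a finite combination of $\beta$-atoms, using the support, cancellation, $L^1$ and heat-bound normalizations of an atom to dominate it by a function integrable against $\Haus_\infty^\beta$ with norm $\lesssim\sum_i|\lambda_i|$, and then to pass to the weak-star limit defining $DS_\beta(\mathbb{R}^d)$ using the uniform control $\limsup_n\sum_i|\lambda_{i,n}|<+\infty$; but this merely reproves Theorem~\ref{maximal bound dsbeta}, so the economical route is the one above.
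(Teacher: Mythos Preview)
Your proposal is correct and follows exactly the route the paper takes: the introduction (and the plan in Section~\ref{dimension_estimates}) states explicitly that Theorem~\ref{dimensional_estimates} is deduced from Theorem~\ref{maximal bound dsbeta} together with Theorem~\ref{dimension complicated}, and the paper does not supply any argument beyond this combination. Your handling of the bookkeeping around the collection~$\F$ and the endpoint $\beta=d$ is accurate and matches the intended reading.
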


The plan of the paper is as follows.  In Subsection \ref{examples}, we introduce a number of examples that show the definition of $\beta$-atoms and $\beta$-atomic spaces are not vacuous.  In Section \ref{preliminaries}, we introduce some requisite preliminaries.  In Section \ref{proofs}, we prove Theorems \ref{strong-type}, \ref{dimension_theorem}, \ref{dimension_theorem-fractional-maximal} and Corollary \ref{endpoint_sawyer}.  In Section \ref{dyadic}, we prove the simplified dimension estimate asserted in Theorem \ref{dimension simplified}.  In Section \ref{fourier} we introduce a notion of distributional $(\beta,N)$-atoms which is slightly more general than the atoms defined in the introduction.  We then show that these atoms admit various estimates which prove useful in our work, and seem likely to be useful in future research on this subject.  These estimates include a bound on their fractional grand maximal function in Lemma \ref{maximal bound} which then implies Theorem \ref{maximal bound dsbeta}.  This is the most technically difficult section of the paper and relies on some subtle Fourier analysis.  Finally, in Section \ref{dimension_estimates} we prove Theorem \ref{dimension complicated} and then from this and the preceding analysis deduce Theorem \ref{dimensional_estimates}.

\subsection{Examples}\label{examples}
In this subsection, we provide a number of examples that illustrate the richness of these function spaces.

\begin{example}\label{frostman}
For any $\beta \in (0,d)$, let $A\subset [0,1/2]^d$ be a set such that $0<\mathcal{H}^\beta(A)<+\infty$, let $\mu$ be the Frostman measure supported on $A$ (see, e.g. \cite[p.~112]{Mattila}), rescaled so that $\mu(B(x,r)) \leq c_1 r^\beta$ and $\mu(A)=c_2$ for $c_1,c_2>0$ to be chosen, and denote by $\tilde{\mu}$ the translate of $\mu$ by the vector $<1/2,1/2,\ldots,1/2>$.  Then the measure
\begin{align*}
    a:= \mu - \tilde{\mu}
\end{align*}
is a $\beta$-atom supported in $Q=[0,1]^d$.  If $c_2\leq 1/2$, the conditions (1),\;(2), and (4) are immediate from the construction, while to verify (3), one computes for any $x \in \mathbb{R}^d$ the quantity
\begin{align*}
| t^{(d-\beta)/2}e^{t\Delta} a (x)| &=t^{(d-\beta)/2} \int_{\mathbb{R}^d} \frac{1}{(4\pi t)^{d/2}}\exp(-|x-y|^2/4t)\;da(y) \\
&= t^{(d-\beta)/2} \sum_{k \in \mathbb{Z}} \int_{B(x,2^{k+1}\sqrt{t}) \setminus B(x,2^{k}\sqrt{t}) } \frac{1}{(4\pi t)^{d/2}}\exp(-|x-y|^2/4t)\;da(y)\\
&\leq t^{(d-\beta)/2} \sum_{k \in \mathbb{Z}} \frac{1}{(4\pi t)^{d/2}}  \exp(-2^{2k-2}) 2c_1(2^{k+1}\sqrt{t})^\beta\\
&\leq \sum_{k \in \mathbb{Z}} \frac{1}{(4\pi)^{d/2}}  \exp(-2^{2k-2}) 2c_1(2^{k+1})^\beta\\
&\leq 1,
\end{align*}
for a sufficiently small choice of $c_1$.
\end{example}

\begin{example}\label{hardyatoms}
    For $\beta=d$ the notion of $\beta$ atom agrees with $L^\infty$ atoms for the Hardy space $\mathrm{H}^1(\mathbb{R}^d)$ up to rescaling.  In particular, if $a$ is a $d$-atom, then $a$ satisfies (1), (2), the classical support and cancellation conditions, while Fatou's lemma and ~(3) imply
\begin{align*}
\sup_{x\in \mathbb{R}^d} |a(x)| \leq  \sup_{x\in \mathbb{R}^d}  \liminf_{t\to 0}| e^{t\Delta} a (x)| \leq \frac{1}{|Q|}.
\end{align*} 
Conversely, if $a$ is an $L^\infty$ atom, then $a$ satisfies (1),\;(2),\;(4), and with a suitable rescaling (3) follows as in Example \ref{frostman}.
\end{example}

\begin{example}\label{loop atom}
Let $\Gamma \subset [0,1]^d$ be a closed piecewise $C^1$ curve with length $1$ and~$C^1$ parametrization $\gamma\colon [0,1] \to \mathbb{R}^d$. Define the associated Radon measure $\mu_\Gamma$ by
\begin{align*}
\int_{\mathbb{R}^d} \Phi \cdot d\mu_\Gamma := \int_0^{1} \Phi(\gamma(t))\cdot \dot{\gamma}(t)\;dt
\end{align*}
for $\Phi \in C_c(\mathbb{R}^d;\mathbb{R}^d)$.  If
\begin{align*}
|\mu_\Gamma|(B(x,r))\leq c_1 r
\end{align*}
for $c_1>0$ sufficiently small, then for each $i= 1,\ldots, d$, the component
\begin{align*}
a:=(\mu_{\Gamma})_i
\end{align*}
is a $1$-atom.  As before, the conditions (1) and (4) are immediate from the construction, while the verification of (3) is identical to the  computation in Example \ref{frostman}.  Finally, the closedness of $\Gamma$ implies
\begin{align*}
    \int_{\mathbb{R}^d} \Phi_i \;d(\mu_\Gamma)_i = 0
\end{align*}
for $\Phi_i \equiv 1$ on $[0,1]^d$, which is the cancellation condition (2).
\end{example}

\begin{lem}
   For $0<\alpha < \beta \leq d$, if $a$ is a $\beta$-atom, then $a$ is an $\alpha$-atom.
    \end{lem}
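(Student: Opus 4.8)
The three conditions (1), (2), and (4) in the definition of a $\beta$-atom---support in a cube $Q$, vanishing total mass, and $|a|(\mathbb{R}^d)\le 1$---make no reference to the parameter $\beta$, so they transfer to $\alpha$ verbatim; the entire content of the lemma is that the smoothing bound (3) at exponent $\beta$ forces the corresponding bound at the smaller exponent $\alpha$. Set $m(t):=\sup_{x\in\mathbb{R}^d}|e^{t\Delta}a(x)|$, so that (3) for $a$ says $m(t)\le l(Q)^{-\beta}t^{-(d-\beta)/2}$ for all $t>0$, and the goal is to produce $m(t)\le l(Q)^{-\alpha}t^{-(d-\alpha)/2}$ for all $t>0$.

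First I would record two pointwise-in-$t$ estimates for $m(t)$. In the small-time regime $t\le l(Q)^2$ the hypothesis alone suffices: since $\beta>\alpha$ one has $t^{(\beta-\alpha)/2}\le l(Q)^{\beta-\alpha}$, whence
\begin{equation*}
t^{(d-\alpha)/2}m(t)=t^{(\beta-\alpha)/2}\,t^{(d-\beta)/2}m(t)\le l(Q)^{\beta-\alpha}\,l(Q)^{-\beta}=l(Q)^{-\alpha}.
\end{equation*}
For large $t$ the decay $m(t)\lesssim t^{-(d-\beta)/2}$ is too slow, and here one must exploit the cancellation $a(Q)=0$. Writing $p_t(z)=(4\pi t)^{-d/2}e^{-|z|^2/4t}$ and letting $y_0$ be the center of $Q$, the identity $a(Q)=0$ gives
\begin{equation*}
e^{t\Delta}a(x)=\int_Q\big(p_t(x-y)-p_t(x-y_0)\big)\,da(y),
\end{equation*}
and the mean value theorem along the segment $[y,y_0]\subset Q$, combined with $\sup_z|\nabla p_t(z)|=(4\pi t)^{-d/2}(2et)^{-1/2}$, $|y-y_0|\le\tfrac{\sqrt d}{2}l(Q)$ and $|a|(Q)\le 1$, yields
\begin{equation*}
m(t)\le c_d\,l(Q)\,t^{-(d+1)/2},\qquad c_d:=\tfrac{\sqrt d}{2\sqrt{2e}\,(4\pi)^{d/2}}.
\end{equation*}

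To conclude I would interpolate between the two bounds $m(t)\le A:=l(Q)^{-\beta}t^{-(d-\beta)/2}$ and $m(t)\le B:=c_d\,l(Q)\,t^{-(d+1)/2}$, using the trivial inequality $m(t)=m(t)^{1-\theta}m(t)^{\theta}\le A^{1-\theta}B^{\theta}$ valid for any $\theta\in[0,1]$. The choice $\theta=\tfrac{\beta-\alpha}{\beta+1}\in(0,1)$ makes both the $l(Q)$-powers and the $t$-powers match, producing $m(t)\le c_d^{\theta}\,l(Q)^{-\alpha}t^{-(d-\alpha)/2}$ for all $t>0$; equivalently, one may simply split at $t=l(Q)^2$, using $A$ below and $B$ above, and arrive at the same estimate. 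The one delicate point---and the main thing to be careful about, since the definition demands the sharp constant $1$ and not merely an implied constant---is that the prefactor $c_d^{\theta}$ must be $\le 1$; this is immediate because $c_d\le 1$ for every $d\ge 1$ (in fact $c_d\le (2\sqrt{2e})^{-1}$, using $\sqrt d\le (4\pi)^{d/2}$), so $c_d^{\theta}\le 1$ as well. Hence $t^{(d-\alpha)/2}m(t)\le l(Q)^{-\alpha}$ for all $t>0$, which is precisely condition (3) at exponent $\alpha$, and therefore $a$ is an $\alpha$-atom.
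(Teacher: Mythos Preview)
Your proof is correct, but it takes a more elaborate route for large $t$ than the paper does. Your small-time estimate ($t\le l(Q)^2$) is identical to the paper's. For $t>l(Q)^2$, however, the paper does \emph{not} invoke the cancellation condition~(2) at all: it simply uses property~(4), i.e.\ $|a|(\mathbb{R}^d)\le 1$, together with the trivial heat-kernel bound $\|p_t\|_{L^\infty}=(4\pi t)^{-d/2}$, to obtain
\[
t^{(d-\alpha)/2}|e^{t\Delta}a(x)|\le t^{(d-\alpha)/2}(4\pi t)^{-d/2}=(4\pi)^{-d/2}t^{-\alpha/2}\le l(Q)^{-\alpha},
\]
which already has constant $\le 1$. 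Your mean-value argument using $a(Q)=0$ yields the sharper decay $m(t)\lesssim l(Q)\,t^{-(d+1)/2}$, and your interpolation with $\theta=(\beta-\alpha)/(\beta+1)$ is a nice touch, but neither is needed here; in particular your remark that ``one must exploit the cancellation'' is not accurate. What your approach buys is a single estimate valid for all $t$ without a case split, at the cost of computing $\sup|\nabla p_t|$ and checking $c_d\le 1$; the paper's argument is shorter and uses only the most elementary properties of the heat kernel.
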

    \begin{proof}
    For $t \leq l(Q)^2$ by the property $(3)$ one has
    \begin{align*}
    |t^{(d-\alpha)/2} e^{t\Delta} a (x)| &=t^{(\beta-\alpha)/2} |t^{(d-\beta)/2} e^{t\Delta} a (x)|\\
    &\leq \frac{l(Q)^{\beta-\alpha}}{l(Q)^\beta},
    \end{align*}
    while if $t > l(Q)^2$ the property $(4)$ yields
    \begin{align*}
    |t^{(d-\alpha)/2} e^{t\Delta} a (x)| &\leq t^{(d-\alpha)/2} \frac{1}{(4\pi t)^{d/2}}\\
    &\leq \frac{1}{l(Q)^\alpha}.
    \end{align*}
    \end{proof}

\begin{cor}\label{nested}
    If $0<\alpha < \beta \leq d$, \begin{align*}
    DS_\beta(\mathbb{R}^d) \subset DS_\alpha(\mathbb{R}^d).
    \end{align*}
    \end{cor}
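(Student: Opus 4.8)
The plan is to read the conclusion off directly from the Lemma just proved together with the definition of the spaces. Fix $0<\alpha<\beta\leq d$ and take $\mu\in DS_\beta(\mathbb{R}^d)$. By definition there is a double array of $\beta$-atoms $\{a_{i,n}\}$ and scalars $\{\lambda_{i,n}\}$ with $\limsup_{n\to\infty}\sum_{i=1}^{n}|\lambda_{i,n}|<+\infty$ such that the partial sums $\sum_{i=1}^{n}\lambda_{i,n}a_{i,n}$ converge to $\mu$ weakly-star as measures. The only thing to observe is that this same data is admissible for $DS_\alpha(\mathbb{R}^d)$.

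Indeed, by the Lemma every $\beta$-atom is also an $\alpha$-atom (with the same associated cube $Q$: conditions \eqref{supp}, \eqref{ave}, \eqref{l1} are literally unchanged, and the Lemma upgrades \eqref{linfty} by splitting the heat-semigroup bound at the scale $t=l(Q)^2$). Hence $\{a_{i,n}\}$ is a legitimate array of $\alpha$-atoms realizing $\mu$ as a weak-star limit with the same coefficients $\{\lambda_{i,n}\}$, which shows $\mu\in DS_\alpha(\mathbb{R}^d)$ and $\|\mu\|_{DS_\alpha(\mathbb{R}^d)}\leq\limsup_{n\to\infty}\sum_{i=1}^{n}|\lambda_{i,n}|$ for every such representation. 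Taking the infimum over all $\beta$-atomic representations of $\mu$ gives $\|\mu\|_{DS_\alpha(\mathbb{R}^d)}\leq\|\mu\|_{DS_\beta(\mathbb{R}^d)}$, so the inclusion is in fact a norm non-increasing embedding.

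There is essentially no obstacle: the mathematical content sits entirely in the Lemma, and the passage to the spaces is a formal consequence of the definitions. The only points worth flagging in writing it up are that one uses the \emph{same} cube $Q$ for both atom conditions (so no rescaling of support is needed), and that since $\alpha>0$ we remain inside the range $(0,d]$ on which $\beta$- and $\alpha$-atoms are both defined, so the endpoint conventions for $DS_0$ play no role here.
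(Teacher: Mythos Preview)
Your proof is correct and follows the same approach as the paper: the corollary is deduced directly from the preceding Lemma (every $\beta$-atom is an $\alpha$-atom) together with the definition of the spaces. Your additional observation that the embedding is norm non-increasing is a small bonus not stated in the paper but follows immediately from the same reasoning.
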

    \begin{proof}
    This follows easily from the definition of the spaces and the preceding Lemma. 
    \end{proof}  

\begin{example}
    Let $\beta=d$.  Then $DS_d(\mathbb{R}^d) \equiv \mathrm{H}^1(\mathbb{R}^d)$.  From Example \ref{hardyatoms} we know that the atoms in the two spaces are the same.  This easily gives the inclusion of the Hardy space in $DS_d(\mathbb{R}^d)$ because the Hardy space consists of distributions which are limits of a fixed series of atoms: 
    \begin{align}\label{hardy}
      f=\sum_{i=1}^\infty \lambda_i a_i.
    \end{align}
     To show the reverse inclusion, note that for any element of $DS_d(\mathbb{R}^d)$ one has the uniform bound
\begin{align*}
\left\|\sum_{i=1}^n \lambda_{i,n} a_{i,n}\right\|_{\mathrm{H}^1(\mathbb{R}^d)} \lesssim \sum_{i=1}^n |\lambda_{i,n}| <\infty.
\end{align*}
That is, the sequence is not only bounded in the space of measures but also in the Hardy space.  The claim now follows from compactness: $\mathrm{H}^1(\mathbb{R}^d)=(VMO(\mathbb{R}^d))'$ gives that up to a subsequence, one can extract a limit in the weak-star topology of $\mathrm{H}^1(\mathbb{R}^d)$.  This limit coincides with the weak-star limit in the sense of measures by density of continuous compactly supported functions in $VMO(\mathbb{R}^d)$, and therefore every element of $DS_d(\mathbb{R}^d)$ is an element of the Hardy space. 
\end{example}

The preceding example shows that the space $DS_d(\mathbb{R}^d)$ is equivalent to the Hardy space $ \mathrm{H}^1(\mathbb{R}^d)$ by establishing that our weaker notion of sequential approximation coincides with the notion in the Hardy space.  We conclude this Section with the example that motivated the definition of these atoms and atomic spaces.

\begin{example}\label{loop embedding}
Let $F \in M_b(\mathbb{R}^d;\mathbb{R}^d)$ satisfy $\operatorname*{div}F=0$ in the sense of distributions.  Then $F_i \in DS_1(\mathbb{R}^d)$ for all $i=1,\ldots,d$.  This follows from a rescaling of the conclusion of \cite[Theorem 1.5]{HS}.
\end{example}

\section{Preliminaries}\label{preliminaries}

\begin{lem}\label{StructuralLemma}
Let~$\beta < d$. Let~$U = \cup_\gamma B(x_\gamma, r_\gamma)$ be an open bounded subset of~$\R^d$. There exists a covering of~$U$ by dyadic cubes~$Q_j$ such that
\eq{\label{GoodHausdorffContent}
\sum_j \ell(Q_j)^\beta \lesssim \Haus_\infty^\beta(U)
} 
and for any~$\gamma$ there exists a cube~$Q_j$ in our covering such that it intersects~$B(x_\gamma, r_\gamma)$ and~$\ell(Q_j) \geq c r_\gamma$; here~$c>0$ is a constant that depends only on the dimension and $\beta$.
\end{lem}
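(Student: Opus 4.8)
The plan is to build the covering from a near-optimal dyadic one and then repair it for the ``large cube'' requirement. Since a Euclidean ball of radius $r$ is covered by at most $2^d$ dyadic cubes of side length in $[2r,4r)$, and every dyadic cube of side $\ell$ lies in a ball of radius $\tfrac{\sqrt d}{2}\ell$, the \emph{dyadic} Hausdorff content $\widetilde{\Haus}_\infty^\beta(E):=\inf\{\sum_j\ell(Q_j)^\beta: E\subset\bigcup_j Q_j,\ Q_j\text{ dyadic}\}$ is comparable to $\Haus_\infty^\beta(E)$ with constants depending only on $d$ and $\beta$. Fix a covering $U\subset\bigcup_i Q_i$ by dyadic cubes with $\sum_i\ell(Q_i)^\beta\le 2\widetilde{\Haus}_\infty^\beta(U)\lesssim\Haus_\infty^\beta(U)$, and fix a small dimensional constant $c=c(d)$ (of order $d^{-1/2}$, pinned down below). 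Call $\gamma$ \emph{good} if some $Q_i$ meets $B(x_\gamma,r_\gamma)$ with $\ell(Q_i)\ge c\,r_\gamma$, and \emph{bad} otherwise. For good $\gamma$ nothing needs to be done, the requested cube being already in $\{Q_i\}$; so it suffices to adjoin a controlled amount of dyadic cubes that serve all bad indices.

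For a bad $\gamma$, every $Q_i$ that meets $B(x_\gamma,r_\gamma)$ has $\ell(Q_i)<c\,r_\gamma$; these cubes cover $B(x_\gamma,r_\gamma)$, so dilating each to a ball of radius $\tfrac{\sqrt d}{2}\ell(Q_i)$ and using $\Haus_\infty^\beta(B(x_\gamma,r_\gamma))\gtrsim_{\beta,d} r_\gamma^\beta$ (an elementary consequence of $\mathcal L^d(B(x_\gamma,r_\gamma))=\omega_d r_\gamma^d$ and $\beta<d$) gives
\[
\sum_{i:\,Q_i\cap B(x_\gamma,r_\gamma)\neq\emptyset}\ell(Q_i)^\beta\;\gtrsim_{\beta,d}\;r_\gamma^\beta.
\]
Apply the $5r$-covering lemma to $\{B(x_\gamma,r_\gamma):\gamma\text{ bad}\}$ to obtain a pairwise disjoint subfamily $\{B(x_{\gamma_k},r_{\gamma_k})\}_k$ such that every bad $\gamma$ satisfies $B(x_\gamma,r_\gamma)\subset B(x_{\gamma_k},5r_{\gamma_k})$ for some $k$ with $r_{\gamma_k}\ge r_\gamma/2$. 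For each $k$ cover $B(x_{\gamma_k},5r_{\gamma_k})$ by at most $2^d$ dyadic cubes of side length in $[10r_{\gamma_k},20r_{\gamma_k})$ and adjoin them. This serves every bad $\gamma$: one of the cubes adjoined for the relevant $k$ meets $B(x_\gamma,r_\gamma)$ and has side length $\ge 10r_{\gamma_k}\ge 5r_\gamma\ge c\,r_\gamma$.

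It remains to bound the $\beta$-content of the adjoined cubes, which is $\lesssim_{\beta,d}\sum_k r_{\gamma_k}^\beta$. The displayed estimate applied to the bad index $\gamma_k$ bounds $r_{\gamma_k}^\beta$ by $\sum_{i\in I_k}\ell(Q_i)^\beta$, where $I_k:=\{i:Q_i\cap B(x_{\gamma_k},r_{\gamma_k})\neq\emptyset\}$, so it is enough to show the sets $I_k$ have bounded overlap. If $i\in I_k$ then $B(x_{\gamma_k},r_{\gamma_k})$ meets the ball $B_i$ of radius $\tfrac{\sqrt d}{2}\ell(Q_i)$ concentric with $Q_i$ (since $Q_i\subset B_i$) and has radius $r_{\gamma_k}>\ell(Q_i)/c$; choosing $c\le\tfrac{1}{20\sqrt d}$ makes $r_{\gamma_k}$ at least $40$ times the radius of $B_i$. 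Thus, for fixed $i$, the balls $\{B(x_{\gamma_k},r_{\gamma_k}):i\in I_k\}$ are pairwise disjoint, each at least $40$ times larger than the fixed ball $B_i$ that they all meet; comparing their points of closest approach to the centre of $B_i$ shows that any two of the associated unit directions are almost antipodal, whence $|\sum u|^2\ge 0$ forces their number to be at most an absolute constant. Consequently $\sum_k r_{\gamma_k}^\beta\lesssim_{\beta,d}\sum_k\sum_{i\in I_k}\ell(Q_i)^\beta\lesssim\sum_i\ell(Q_i)^\beta\lesssim\Haus_\infty^\beta(U)$, so the family $\{Q_i\}_i$ together with the adjoined cubes covers $U$, satisfies \eqref{GoodHausdorffContent}, and, for every $\gamma$, contains a cube meeting $B(x_\gamma,r_\gamma)$ of side length $\ge c\,r_\gamma$.

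The main obstacle is precisely this bounded-overlap step: one must verify that only boundedly many pairwise disjoint balls, each much larger than a fixed ball they all meet, can coexist, and this is what dictates the dimensional smallness of $c$. Everything else — the comparison of the two contents, the $5r$-selection, and the bookkeeping of contents — is routine covering-lemma manipulation.
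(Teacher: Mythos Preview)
Your proof is correct and takes a genuinely different route from the paper's. The paper argues by iteration: starting from a near-optimal dyadic cover, it repeatedly selects a bad ball of nearly maximal radius, discards all the (necessarily small) cubes meeting it, and replaces them by $\leq 2^d$ dyadic cubes of side comparable to the ball's diameter. The key point there is that, because $\beta<d$, the $\beta$-content of the discarded cubes exceeds that of the replacements by a definite amount $c'r_\gamma^\beta$, so the total content is monotone decreasing along the iteration; this forces the radii of the successively selected bad balls to tend to zero and the large cubes in the cover to stabilize. Your argument replaces this iteration with a single Vitali selection among the bad balls, adjoins large dyadic cubes for each selected ball in one shot, and controls the adjoined content through the bounded-overlap observation that only boundedly many pairwise disjoint balls, each of radius at least $40\rho$, can meet a fixed ball of radius $\rho$ (your ``almost antipodal'' step, which is rigorously $\langle u_k,u_l\rangle\le -1+O(1/40)$ and hence $|\sum u_k|^2\ge 0$ bounds their number by an absolute constant). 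Your approach avoids the somewhat delicate stabilization argument and gives a cleaner, finitary construction; the paper's approach has the advantage that it never enlarges the family, so the final cover is literally a coarsening of the initial near-optimal one, which can be convenient if one wants the cubes to remain subordinate to a fixed lattice without extraneous additions.
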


\begin{proof}
By the equivalence of the dyadic Hausdorff content and Hausdorff content, we may find a countable collection of dyadic cubes such that
\begin{align*}
\sum_j \ell(Q_j)^\beta \lesssim \Haus_\infty^\beta(U).
\end{align*}
If for every $B(x_\gamma, r_\gamma)$, the condition $\ell(Q_j) \geq c r_\gamma$ is satisfied we are done.  Otherwise let $\mathcal{B}=\{ \gamma\}$ such that this condition is violated.  For each $\gamma \in \mathcal{B}$ one has $\ell(Q_j) < c r_\gamma$ for every $Q_j$ which intersects $B(x_\gamma,r_\gamma)$.  Because $U$ is bounded, $R:=\sup_{\gamma \in \mathcal{B}} r_\gamma < +\infty$, and in the first iteration we choose a $\gamma \in \mathcal{B}$ such that $r_\gamma>R/2$.  The ball $B(x_\gamma, 2r_\gamma)$ intersects $L$ dyadic cubes $Q_i'$ with $L\leq 2^d$ such that $l(Q_i')/2 <2r_\gamma \leq l(Q_i')$.  Throw away all of the cubes from the original collection which intersect $B(x_\gamma,r_\gamma)$ and add these $L$ cubes which intersect $B(x_\gamma, 2r_\gamma)$.  The collection remains a cover, $l(Q_i') \geq 2 r_\gamma$, and we claim that for these cubes
\begin{align*}
\sum_{i=1}^L \ell(Q'_i)^\beta \leq \sum_{Q_j \cap B(x_\gamma,r_\gamma)} \ell(Q_j)^\beta - c'r_\gamma^\beta
\end{align*}
for some $c'>0$.  This is true because $L\leq 2^d$, and with the bounds on the lengths one has
\begin{align*}
\sum_{i=1}^{L} l(Q_i')^\beta \leq 2^d 4^\beta r_\gamma^\beta
\end{align*}

Meanwhile, observe that because the cubes form a cover of the ball, one has
\begin{align*}
\omega_d r_\gamma^d  &\leq \sum_{Q_j \cap B(x_\gamma,r_\gamma)} l(Q_j)^d \\
&= \sum_{Q_j \cap B(x_\gamma,r_\gamma)} l(Q_j)^{d-\beta}l(Q_j)^\beta \\
&<\sum_{Q_j \cap B(x_\gamma,r_\gamma)} (cr_\gamma)^{d-\beta}l(Q_j)^\beta,
\end{align*}
and therefore
\begin{align*}
\frac{\omega_d}{c^{d-\beta}} r_\gamma^\beta \leq \sum_{Q_j \cap B(x_\gamma,r_\gamma)}l(Q_j)^\beta.
\end{align*}
Choose $c$ such that
\begin{align*}
 2^d 4^\beta  = \frac{1}{2} \frac{\omega_d}{c^{d-\beta}}
\end{align*}
Then
\begin{align*}
\sum_{i=1}^{L} l(Q_i')^\beta &\leq 2^d 4^\beta r_\gamma^\beta \\
&=\frac{\omega_d}{c^{d-\beta}} r_\gamma^\beta-  \frac{1}{2}\frac{\omega_d}{c^{d-\beta}} r_\gamma^\beta \\
&\leq \sum_{Q_j \cap B(x_\gamma,r_\gamma)}l(Q_j)^\beta - c'r_\gamma^\beta
\end{align*}
for $c'= \frac{1}{2}\frac{\omega_d}{c^{d-\beta}}$.

One then iterates this argument, with the observation that because one subtracts $c'r_\gamma^\beta$, the $r_\gamma$ selected must tend to zero, as the Hausdorff content of the set is strictly positive.  This means that one does not change large cubes, and so the cover stabilizes.

\end{proof}

\begin{cor}\label{StructuralCorollary}
Let~$\beta < d$.  Let~$U = \cup_\gamma B(x_\gamma, r_\gamma)$ be an open bounded subset of~$\R^d$. We may find a covering of~$U$ by balls~$B(\tilde{x}_j, \tilde{r}_j)$ such that~$\sum \tilde{r}_j^\beta \lesssim \Haus_\infty^\beta(U)$ and each ball~$B(x_\gamma,r_\gamma)$ is covered entirely by a ball of the covering.
\end{cor}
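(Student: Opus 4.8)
The plan is to deduce Corollary~\ref{StructuralCorollary} from Lemma~\ref{StructuralLemma} by replacing the dyadic cubes produced there with slightly inflated concentric balls. First I would invoke Lemma~\ref{StructuralLemma} to obtain a countable covering of $U$ by dyadic cubes $\{Q_j\}_j$ with $\sum_j \ell(Q_j)^\beta \lesssim \Haus_\infty^\beta(U)$ and with the additional feature that every $B(x_\gamma, r_\gamma)$ meets some cube $Q_j$ of the covering satisfying $\ell(Q_j) \geq c r_\gamma$, where $c = c(d,\beta) > 0$ is the constant of that lemma. For each $j$ let $\tilde x_j$ denote the center of $Q_j$ and put $\tilde r_j := \bigl(\tfrac{2}{c} + \tfrac{\sqrt d}{2}\bigr)\,\ell(Q_j)$.

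The key step is the covering-by-a-single-ball property, which follows directly from the two properties furnished by the Lemma. Fix $\gamma$ and choose, as above, a cube $Q_j$ of the covering with $Q_j \cap B(x_\gamma, r_\gamma) \neq \emptyset$ and $\ell(Q_j) \geq c r_\gamma$; pick a point $z \in Q_j \cap B(x_\gamma, r_\gamma)$. For any $w \in B(x_\gamma, r_\gamma)$ the triangle inequality gives $|w - \tilde x_j| \leq |w - x_\gamma| + |x_\gamma - z| + |z - \tilde x_j| < 2 r_\gamma + \tfrac{\sqrt d}{2}\ell(Q_j) \leq \bigl(\tfrac{2}{c} + \tfrac{\sqrt d}{2}\bigr)\ell(Q_j) = \tilde r_j$, where I used $|z - \tilde x_j| \leq \tfrac{\sqrt d}{2}\ell(Q_j)$ (half the diagonal of $Q_j$) and $r_\gamma \leq \ell(Q_j)/c$. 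Hence $B(x_\gamma, r_\gamma) \subset B(\tilde x_j, \tilde r_j)$.

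It remains to collect the easy points. Since $\tilde r_j \geq \tfrac{\sqrt d}{2}\ell(Q_j)$, each ball $B(\tilde x_j, \tilde r_j)$ contains $Q_j$, so $\{B(\tilde x_j, \tilde r_j)\}_j$ still covers $U$; and $\sum_j \tilde r_j^\beta = \bigl(\tfrac{2}{c} + \tfrac{\sqrt d}{2}\bigr)^\beta \sum_j \ell(Q_j)^\beta \lesssim \Haus_\infty^\beta(U)$, with implicit constant depending only on $d$ and $\beta$. I do not anticipate a genuine obstacle: all of the real work is already encapsulated in Lemma~\ref{StructuralLemma}, and the only bookkeeping point is to inflate \emph{every} cube of the Lemma's covering (not just some of them), so that the specific cube the Lemma attaches to a given $B(x_\gamma, r_\gamma)$ is guaranteed to appear among the covering balls.
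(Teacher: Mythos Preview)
Your argument is correct and follows essentially the same approach as the paper: apply Lemma~\ref{StructuralLemma} and then replace each dyadic cube by a concentric ball of radius a fixed dimensional multiple of its sidelength. The paper's proof is a two-line sketch (``the desired cover is then obtained by dilates of balls containing these dyadic cubes''), and you have simply written out the triangle-inequality computation and the explicit dilation factor that the paper leaves implicit.
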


\begin{proof}
An application of Lemma \ref{StructuralLemma} yields dyadic cubes which satisfy \eqref{GoodHausdorffContent} and whose lengths satisfy $\ell(Q_j) \geq c r_\gamma$.  The desired cover is then obtained by dilates of balls containing these dyadic cubes.
\end{proof}

\section{Sobolev Inequalities}\label{proofs}
In this Section we establish the several Sobolev inequalities of the paper.  We begin with the proof of Theorem \ref{strong-type}.
\begin{proof}[Proof of Theorem \ref{strong-type}]
The argument follows that in the paper \cite{SpectorHernandezRaita2021}, which we recall for the convenience of the reader.  By the definition of the atomic space, it suffices to show that there is a universal constant $C=C(\alpha,\beta,d)>0$ such that
\begin{align}\label{sufficient}
\int_0^\infty t^{\alpha/2-1} \|e^{t\Delta} a\|_{L^{d/(d-\alpha),1}(\mathbb{R}^d)}\;dt \leq C
\end{align}
for every $\beta$-atom.  If one interpolates the usual convolution estimates
\begin{align}
\|e^{t\Delta} a\|_{L^{1}(\mathbb{R}^d)} \leq \|a\|_{L^1(\mathbb{R}^d)} \label{L1 convolution} \\
\|e^{t\Delta} a\|_{L^{\infty}(\mathbb{R}^d)} \leq \frac{1}{(4\pi t)^{d/2}} \|a\|_{L^1(\mathbb{R}^d)}, \label{Linfinity}
\end{align}
one finds a logarithmic divergence of the integral in \eqref{sufficient}.  However, the definition of atoms gives a slightly better estimate than the second for small values of $t$:
\begin{align}
\|e^{t\Delta} a\|_{L^{\infty}(\mathbb{R}^d)} \leq \frac{1}{t^{(d-\beta)/2} l(Q)^\beta}, \label{Linfinityprime}
\end{align}
which by interpolation with \eqref{L1 convolution} (and the fact that $\|a\|_{L^1(Q)} \leq 1$) implies that
\begin{align*}
\int_0^{l(Q)^2} t^{\alpha/2-1} \|e^{t\Delta} a\|_{L^{d/(d-\alpha),1}(\mathbb{R}^d)}\;dt \leq C_1.
\end{align*}
We will next show that
\begin{align}
\|e^{t\Delta} a\|_{L^{1}(\mathbb{R}^d)} \leq C\max\left\{\frac{l(Q)^{d-\beta}}{t^{(d-\beta)/2}},\frac{l(Q)}{t^{1/2}}\right\} \label{L1prime}
\end{align}
for $t \geq l(Q)^2$.  

Without loss of generality we may assume that the cube is centered at zero.  We then observe that for $x\in (2Q)^c,y \in Q$ we have
\begin{align*}
|\exp(-|x-y|^2/4t)-\exp(-|x|^2/4t)| \lesssim  \frac{|y|}{t^{1/2}} |\exp(-c|x|^2/t),
\end{align*}
for some $c>0$, and therefore
\begin{align*}
\int_{(2Q)^c} &|e^{t\Delta} a(x)|\;dx \\
&= \frac{1}{(4\pi t)^{d/2}} \int_{(2Q)^c} \left|\int_{Q} \left[\exp(-|x-y|^2/4t)-\exp(-|x|^2/4t)\right]a(y)\;dy \right|\;dx \\
&\lesssim \frac{l(Q)}{ t^{1/2}} \|a\|_{L^1(Q)}.
\end{align*}
A combination of this and \eqref{Linfinityprime} yields
\begin{align*}
\| e^{t\Delta}a\|_{L^{1}(\mathbb{R}^d)} &= \int_{2Q} |e^{t\Delta}  a|\;dx + \int_{(2Q)^c} |e^{t\Delta}a|\;dx \\
&\leq \frac{t^{(\beta-d)/2}}{l(Q)^\beta} (2l(Q))^d + \frac{l(Q)}{t^{1/2}}, \\
&\leq C' \left(\frac{l(Q)^{d-\beta}}{t^{(d-\beta)/2}} + \frac{l(Q)}{t^{1/2}}\right)
\end{align*}

The conclusion of the theorem then follows by the interpolation of \eqref{L1prime} and \eqref{Linfinity}, which is sufficient to obtain the bound
\begin{align*}
\int_{l(Q)^2}^\infty t^{\alpha/2-1} \|e^{t\Delta} a\|_{L^{d/(d-\alpha),1}(\mathbb{R}^d)}\;dt \leq C_2.
\end{align*}

\end{proof}

We next give the proof of Theorem \ref{dimension_theorem}.

\begin{proof}[Proof of Theorem \ref{dimension_theorem}]
As in the preceding theorem, it suffices to prove the estimate for a single atom and without loss of generality we may assume this atom is supported in a cube centered at zero. For $\alpha>d-\beta$, we have
\begin{align*}
|I_\alpha a(x)| &\leq \frac{1}{\Gamma(\alpha/2)}\int_0^\infty t^{\alpha/2-1}|e^{t\Delta} a(x)|\;dt\\
&\leq \frac{1}{\Gamma(\alpha/2)}\int_0^{r} t^{(\alpha+\beta-d)/2-1} t^{(d-\beta)/2} |e^{t\Delta} a(x)|\;dt\\
&\;\;+\frac{1}{\Gamma(\alpha/2)}\int_{r}^{\infty} t^{\alpha/2-1}  |e^{t\Delta}  a(x)|\;dt \\
&\leq C \left(\frac{r^{(\alpha+\beta-d)/2}}{l(Q)^\beta} + r^{(\alpha-d)/2} \|a\|_{L^1(Q)}\right)\\
&=C' l(Q)^{\alpha-d}
\end{align*}
(with the choice $r=l(Q)^2$) for all $x \in \mathbb{R}^d$.  We also claim the decay estimate
\begin{align*}
|I_\alpha a (x)| \leq l(Q) \frac{C}{|x|^{d-\alpha+1}}
\end{align*}
if $x \notin 2Q$.  Here we use
\begin{align*}
I_\alpha a (x) = \frac{1}{\gamma(\alpha/2)} \int_{\mathbb{R}^d} \left[\frac{1}{|x-y|^{d-\alpha}} - \frac{1}{|x|^{d-\alpha}} \right]a(y)\;dy
\end{align*}
and
\begin{align*}
\left|\frac{1}{|x-y|^{d-\alpha}} - \frac{1}{|x|^{d-\alpha}} \right| \lesssim \frac{ |y|}{|x|^{d-\alpha+1}}.
\end{align*}
Therefore as $\nu(Q)\leq l(Q)^{d-\alpha}$ we find
\begin{align*}
\int_{\mathbb{R}^d} |I_\alpha a(x)| \;d\nu &= \int_{2Q} |I_\alpha a(x)| \;d\nu + \int_{(2Q)^c} |I_\alpha a(x)| \;d\nu \\
&\leq C' l(Q)^{\alpha-d} (2l(Q))^{d-\alpha} + l(Q) \int_{(2Q)^c} \frac{1}{|x|^{d-\alpha+1}}\;d\nu
\end{align*}
The result follows from the fact that
\begin{align*}
l(Q) \int_{(2Q)^c} \frac{1}{|x|^{d-\alpha+1}}\;d\nu \leq C,
\end{align*}
which can be argued by a dyadic expansion as in \cite{RSS}.
\end{proof}

We next give the proof of Theorem \ref{dimension_theorem-fractional-maximal}.

\begin{proof}[Proof of Theorem \ref{dimension_theorem-fractional-maximal}]
As in the two preceding theorems, it suffices to show that
\begin{align*}
 \| \sup_{t>0} t^{(d-\beta)/2}|e^{t\Delta}a|\|_{L^{1}(\nu)}  \leq C
\end{align*}
for any $\beta$-atom $a$ supported in a cube centered at zero.  As in Theorem \ref{strong-type}, we split the estimate into the local and global pieces
\begin{align*}
    \left\| \sup_{t>0} t^{(d-\beta)/2} |e^{t\Delta}a| \right\|_{L^{1}(\nu)} &= \int_{2Q} \sup_{t>0} t^{(d-\beta)/2} |e^{t\Delta}  a|\;d\nu + \int_{(2Q)^c} \sup_{t>0} t^{(d-\beta)/2} |e^{t\Delta}a|\;d\nu.
\end{align*}
The assumption $a$ is a $\beta$-atom yields for the first term the bound
\begin{align*}
\int_{2Q} \sup_{t>0} t^{(d-\beta)/2} |e^{t\Delta}  a|\;d\nu &\leq \int_{2Q} \frac{1}{l(Q)^\beta} \;d\nu \\
&\leq C.
\end{align*}
For the second term, for $x\in (2Q)^c,y \in Q$ the inequality
\begin{align*}
|\exp(-|x-y|^2/4t)-\exp(-|x|^2/4t)| \lesssim  \frac{|y|}{t^{1/2}} \exp(-c|x|^2/t),
\end{align*}
for some $c<1/4$ implies
\begin{align*}
\sup_{t>0} t^{(d-\beta)/2} |e^{t\Delta}  a| &\lesssim \sup_{t>0} \frac{l(Q)}{t^{(\beta+1)/2}} \exp(-c|x|^2/t) \\
&\lesssim \frac{l(Q)}{|x|^{\beta+1}},
\end{align*}
as optimization yields $t^2\approx |x|$ in the computation of the supremum.  Therefore
\begin{align*}
\int_{(2Q)^c} \sup_{t>0} t^{(d-\beta)/2} |e^{t\Delta}  a| \;d\nu &\leq \int_{(2Q)^c} \frac{l(Q)}{|x|^{\beta+1}}d\nu\\
&\leq C
\end{align*}
by another dyadic splitting argument.  This completes the proof of the desired bound.
\end{proof}

We conclude this section with the proof of Corollary \ref{endpoint_sawyer}.

\begin{proof}[Proof of Corollary \ref{endpoint_sawyer}]
As in Example \ref{loop embedding}, the atomic decomposition from \cite{HS} shows that
\begin{align*}
\left\{ F \in M_b(\mathbb{R}^d;\mathbb{R}^d) : \operatorname*{div}F=0\right\} \hookrightarrow [DS_{1}]^d.
\end{align*}
Therefore in combination with Theorem \ref{dimension_theorem-fractional-maximal} and Corollary \ref{nested} one finds, for any of the components $F_i$ of $F$ and any $\beta \geq 1$
\begin{align*}
     \| \sup_{t>0} t^{(d-\beta)/2} |e^{t\Delta}F_i|\|_{L^{1}(\nu)} &\leq C\|F_i\|_{DS_{\beta}(\mathbb{R}^d)}\\
     &\leq C \|F_i\|_{DS_{1}(\mathbb{R}^d)}\\
     &\leq C\|F\|_{L^1}.
\end{align*}
This completes the proof.
\end{proof}

% We conclude this Section with the proof of Theorem \ref{duality_theorem}.

% \begin{proof}
% Let $g \in BMO^\beta(\mathbb{R}^d) \cap C_c(\mathbb{R}^d)$.  Then
% \begin{align*}
% \left|\int g\;d\mu \right| &= \lim_{n \to \infty} \left|\sum_{i=1}^n \lambda_{i,n} \int_{Q_{i,n}} g\;da_{i,n} \right| \\
% &\leq \lim_{n \to \infty} \sum_{i=1}^n |\lambda_{i,n}| \int_{Q_{i,n}} |g-c_{i,n}|\;d|a_{i,n}|  \\
% \end{align*}
% \end{proof}

\section{A Simplified Dimension Estimate}\label{dyadic}
For $\beta \in [0,d)$ we introduce a truncated analogue of the dyadic fractional maximal function which only computes the supremum over cubes with length larger than some parameter $l$:
\begin{align}\label{truncated}
\mathcal{M}^{Q_0}_{d-\beta,l}(\mu)(x):=\sup_{\substack{Q \in \mathcal{D}(Q_0),\\l(Q)\geq l}} \chi_Q(x) \frac{|\mu(Q)|}{\;\;l(Q)^{\beta}},
\end{align}

\begin{proof}[Proof of Theorem \ref{dimension simplified}]
As dimension is a local property, it suffices to prove that for $A \subset \tau +Q_0$, $\mathcal{H}^\beta(A)=0$ implies $|\mu|(A)=0$. Without loss of generality restrict our consideration to $A \subset Q_0$.  Here we will actually prove a stronger quantitative version of the dimensional estimate: For all~$\epsilon > 0 $ there exists~$\delta > 0$ such that for any Borel set~$A \subset Q_0$ with~$\Haus^{\beta,Q_0}_\infty(A) < \delta$ we also have~$|\mu|(A) < \epsilon$.  Here we utilize the dyadic Hausdorff content $\Haus^{\beta,Q_0}_\infty$, whose definition is given by
\begin{align*}
    \Haus^{\beta,Q_0}_\infty(A) := \inf \left\{ \sum_{i=1}^\infty l(Q_i)^\beta : A \subset \cup_{i=1}^\infty Q_i, Q_i \in \mathcal{D}(Q_0)\right\}.
\end{align*}

Let $\epsilon>0$ be given. First, we claim that we can use absolute continuity of the integral to choose $\delta>0$ sufficiently small such that if $\Haus^{\beta,Q_0}_\infty(A) < \delta$ then
\begin{align}\label{absolute continuity}
\int_A \mathcal{M}^{Q_0}_{d-\beta}(\mu) \;d\mathcal{H}^{\beta,Q_0}_\infty <\frac{\epsilon}{32}.
\end{align}
Indeed, from the assumption \eqref{L1}, we can find $M>0$ such that
\begin{align*}
\int_M^\infty \mathcal{H}_\infty^{\beta,Q_0}(\{x \in Q_0: \mathcal{M}^{Q_0}_{d-\beta}(\mu)(x)>t\})\;dt< \epsilon/64.
\end{align*}
For this choice of $M$, for any $\delta \leq \epsilon/64M$ one has
\begin{align*}
\int_A \mathcal{M}^{Q_0}_{d-\beta}(\mu) \;d\mathcal{H}^{\beta,Q_0}_\infty &\leq \int_0^M \mathcal{H}_\infty^{\beta,Q_0}(\{x \in A : \mathcal{M}^{Q_0}_{d-\beta}(\mu)(x)>t\})\;dt\\
\;\;&+ \int_M^\infty \mathcal{H}_\infty^{\beta,Q_0}(\{\mathcal{M}^{Q_0}_{d-\beta}(\mu)(x)>t\})\;dt\\
&\leq \epsilon/64+ \epsilon/64 = \epsilon/32.
\end{align*}

Second, we claim we can restrict our attention to
\begin{align*}
A':=\{ x \in A : l_x \geq \frac{1}{N_\epsilon}\}
\end{align*}
for some sufficiently large $N_\epsilon$, where $l_x$ is such that
\begin{align*}
|\mu|(Q) \leq 2 |\mu(Q)|
\end{align*}
for every $x \in l(Q)\leq l_x$.  Indeed, by the Lebesgue differentiation theorem, we have that
\begin{align*}
\lim_{Q\to x} \frac{|\mu(Q)|}{|\mu|(Q)} = 1
\end{align*}
for $\mu$ almost every $x \in \mathbb{R}^d$.  As a consequence, for every $x \in A$, there exists $l_x>0$ such that
\begin{align*}
|\mu|(Q) \leq 2 |\mu(Q)|
\end{align*}
for $l(Q)\leq l_x$.  Using $|\mu|(Q_0)<+\infty$, this implies 
\begin{align*}
\lim_{n \to \infty} |\mu|( \{ x \in Q_0 : 0<l_x<\frac{1}{n}\}) =|\mu|( \{ x \in Q_0 : l_x=0\}) =0
\end{align*}
and therefore we can find $N_\epsilon \in \mathbb{N}$ such that
\begin{align}
|\mu|( \{ x \in Q_0 : 0<l_x<\frac{1}{n}\}) <\frac{\epsilon}{32} \label{small_cube_control}
\end{align}
for every $n\geq N_\epsilon$.
Thus, we can estimate
\begin{align}
|\mu|(A) &\leq |\mu|( A') + \frac{\epsilon}{32}.\label{large_cubes_remain}
\end{align}

Third, we replace $A'$ with a dyadic covering subordinate to $Q_0$ by using the definition of the dyadic Hausdoroff content:  For any $\delta>0$, by the definition of $\Haus^{\beta,Q_0}_\infty(A)$ there exists a family of dyadic cubes subordinate to $Q_0$ such that
\begin{align*}
A \subset \bigcup_{i=1}^\infty& Q_i\\
\sum_{i=1}^\infty l(Q_i)^\beta &< \delta\\
Q_i \cap Q_j &= \emptyset \text{ if } i \neq j.
\end{align*}
In combination with our first restriction on $\delta$, for any $\delta \leq \min\{\epsilon/64M,\frac{1}{2N_\epsilon^\beta}\}$ we can ensure the cubes are small enough that can utilize the consequences of the Lebesgue differentiation theorem for the remaining sets to estimate, and in particular we only need to consider cubes for which
\begin{align*}
A' \cap Q_i &\neq \emptyset.
\end{align*}
Define
\begin{align*}
A_n&:= \cup_{i=1}^n Q_i \\
A_\infty&:= \cup_{i=1}^\infty Q_i.
\end{align*}
Then
\begin{align*}
|\mu|(A') &= |\mu|(A'\setminus A_n) + |\mu|(A'\cap A_n) \\
&\leq |\mu|(A_\infty \setminus A_n)+ |\mu|(A'\cap A_n) \\
&\leq \frac{\epsilon}{32} + |\mu|(A_n)
\end{align*}
for $n \geq M_\epsilon \in \mathbb{N}$. As
\begin{align*}
\Haus^{\beta,Q_0}_\infty(A_n)\leq \sum_{i=1}^n l(Q_i)^\beta <\delta,
\end{align*}
the absolute continuity of the integral noted above implies
\begin{align*}
    \int_{A_n} \mathcal{M}^{Q_0}_{d-\beta}(\mu) \;d\mathcal{H}^{\beta,Q_0}_\infty \leq \frac{\epsilon}{32}.
\end{align*}
If we define
\begin{align*}
    l_n := \min_{i=1,\ldots,n} l(Q_i)
\end{align*}
this implies
\begin{align*}
    \int_{A_n} \mathcal{M}^{Q_0}_{d-\beta,l_n}(\mu) \;d\mathcal{H}^{\beta,Q_0}_\infty \leq \frac{\epsilon}{32},
\end{align*}
where $\mathcal{M}^{Q_0}_{d-\beta,l_n}(\mu)$ is the truncated maximal function introduced in \eqref{truncated}.

Finally, if we can show that
\begin{align*}
|\mu|(A_n) \leq \frac{\epsilon}{2}
\end{align*}
then
\begin{align*}
    |\mu|(A) &\leq |\mu|(A') + \frac{\epsilon}{32}\\
    &\leq |\mu|(A_n) + \frac{\epsilon}{16} \\
    &\leq \frac{\epsilon}{2} +\frac{\epsilon}{16} \\
    &<\epsilon.
\end{align*}
which is the desired conclusion.  To this end, for each $k \in \mathbb{Z}$, define
\begin{align*}
\Omega_{n,k}:= \{ x \in A_n :  2^{k+1} \geq \mathcal{M}^{Q_0}_{d-\beta,l_n}(\mu)(x) >2^k\}
\end{align*}
By the definition of the Hausdorff content, we can now find a family of cubes $\{Q_{n,k,j}\}_{j \in \mathbb{N}}$ such that
\begin{align*}
\Omega_{n,k} \subset \bigcup_{j} Q_{n,k,j}
\end{align*}
and
\begin{align*}
  \sum_{j } l(Q_{n,k,j})^\beta    \leq 2 \mathcal{H}^{\beta,Q_0}_\infty(\Omega_{n,k}).
\end{align*}
As
\begin{align*}
\mathcal{H}^{\beta,Q_0}_\infty(\Omega_{n,k}) \leq \mathcal{H}^{\beta,Q_0}_\infty(A_n) <\delta,
\end{align*}
we note that from the choice of $\delta \leq \frac{1}{2N_\epsilon^\beta}$ we have that $l(Q_{n,k,j})\leq \frac{1}{N_\epsilon}$.

Then the truncation on the maximal function implies it is nowhere infinite (and in fact has a uniform upper bound which depends on $n$ through $l_n$) allows us to write
\begin{align*}
    A_n = \bigcup_{k \in \mathbb{Z}} \Omega_{n,k}
\end{align*}
and therefore
\begin{align*}
     A_n = \bigcup_{k \in \mathbb{Z}} \Omega_{n,k} \subset \bigcup_{k \in \mathbb{Z}} \bigcup_{j \in \mathbb{N}} Q_{n,k,j} 
\end{align*}

Putting these several facts together, and discarding any cubes which do not intersect $A_n$, we obtain
\begin{align*}
|\mu|(A_n)&\leq \sum_{k\in \mathbb{Z},j\in \mathbb{N}} |\mu|(Q_{n,k,j}) \\
&= \sum_{k\in \mathbb{Z},j\in \mathbb{N}} l(Q_{n,k,j})^{\beta}  l(Q_{n,k,j})^{-\beta} |\mu|(Q_{n,k,j}) \\
&\leq 2\sum_{k\in \mathbb{Z},j\in \mathbb{N}} l(Q_{n,k,j})^{\beta}  l(Q_{n,k,j})^{-\beta} |\mu(Q_{n,k,j})| \\
&\leq 2 \sum_{k\in \mathbb{Z},j\in \mathbb{N}} l(Q_{n,k,j})^{\beta}  2^{k+1} \\
&\leq 8 \sum_{k\in \mathbb{Z}} 2^{k}\mathcal{H}^{\beta,Q_0}_\infty(\Omega_{n,k})\\
&\leq 16 \int_{A_n} \mathcal{M}^{Q_0}_{d-\beta,l_n}(\mu)d\mathcal{H}^{\beta,Q_0}_\infty\\
&\leq \frac{16 \epsilon}{32} \\
&= \frac{\epsilon}{2},
\end{align*}
which completes the demonstration of the claim and therefore the proof.

\end{proof}

\section{Maximal Function Estimates via Littlewood-Paley theory}\label{fourier}
In the sequel we will utilize the following lemma about the existence of a compactly supported function whose Fourier transform does not vanish on a ball.
\begin{lem}\label{lemma1}
There exists a function $\Psi \in C^\infty_c(\mathbb{R}^d)$ such that $\widehat{\Psi}(\xi) \geq 1$ on $B(0,1)$. 
\end{lem}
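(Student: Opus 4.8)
The statement to prove is Lemma \ref{lemma1}: the existence of $\Psi \in C_c^\infty(\mathbb{R}^d)$ with $\widehat{\Psi}(\xi) \geq 1$ on $B(0,1)$. The plan is to build $\Psi$ by hand starting from a single real, even, compactly supported bump function and then averaging a dilated family of it so that the resulting Fourier transform is not only nonnegative but bounded below by a positive constant on the unit ball; a final rescaling turns that constant into $1$.

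First I would fix $\varphi \in C_c^\infty(\mathbb{R}^d)$ real, even, and nonnegative with $\varphi \not\equiv 0$, say supported in $B(0,1)$. Then $\psi := \varphi * \varphi$ is again in $C_c^\infty(\mathbb{R}^d)$, is real and even, and has $\widehat{\psi} = (\widehat{\varphi})^2 \geq 0$ everywhere; moreover $\widehat{\psi}(0) = \bigl(\int \varphi\bigr)^2 > 0$. By continuity of $\widehat{\psi}$ there is a radius $\rho > 0$ and a constant $c_0 > 0$ with $\widehat{\psi}(\xi) \geq c_0$ for all $|\xi| \leq \rho$. If $\rho \geq 1$ we are essentially done after normalization; the remaining issue is to cover the full unit ball when $\rho$ is small. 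For that, set $\psi_\lambda(x) := \lambda^{-d}\psi(x/\lambda)$ for $\lambda > 0$, so that $\widehat{\psi_\lambda}(\xi) = \widehat{\psi}(\lambda \xi) \geq 0$ everywhere and $\geq c_0$ whenever $|\xi| \leq \rho/\lambda$. Choosing $\lambda$ small enough that $\rho/\lambda \geq 1$ gives $\widehat{\psi_\lambda}(\xi) \geq c_0 > 0$ on all of $B(0,1)$, while $\psi_\lambda \in C_c^\infty(\mathbb{R}^d)$ is supported in $B(0,\lambda)$. Finally define $\Psi := c_0^{-1}\psi_\lambda$; then $\Psi \in C_c^\infty(\mathbb{R}^d)$ and $\widehat{\Psi}(\xi) = c_0^{-1}\widehat{\psi}(\lambda\xi) \geq 1$ on $B(0,1)$, as required. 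One should note $\widehat{\Psi}$ may genuinely be much larger than $1$ away from the origin, which is harmless since only the lower bound on $B(0,1)$ is claimed.

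There is no serious obstacle here; the only point requiring the slightest care is the passage from a positive lower bound $c_0$ on a small ball $B(0,\rho)$ to one on the full unit ball, which is handled by the dilation $\psi \mapsto \psi_\lambda$ exploiting the exact scaling $\widehat{\psi_\lambda}(\xi) = \widehat{\psi}(\lambda\xi)$ and the fact that dilation preserves smoothness and compact support. The use of a self-convolution $\varphi * \varphi$ to force $\widehat{\psi} = |\widehat{\varphi}|^2 \geq 0$ is the standard trick that makes the sign condition automatic; alternatively one could take any even bump and square its Fourier transform via the convolution theorem, which is the same construction phrased differently. Assembling these three elementary observations — self-convolution for nonnegativity, continuity at $0$ for a local lower bound, and dilation to spread that bound over $B(0,1)$ — completes the proof.
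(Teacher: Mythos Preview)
Your proof is correct and follows essentially the same strategy as the paper's: both start from a compactly supported bump whose Fourier transform is positive at the origin, use continuity to obtain a uniform lower bound on a small ball, and then dilate and normalize to achieve $\widehat{\Psi}\geq 1$ on $B(0,1)$. The only difference is cosmetic---you use self-convolution $\varphi*\varphi$ to force $\widehat{\psi}=(\widehat{\varphi})^2\geq 0$ globally (harmless but not needed), whereas the paper works directly with the standard mollifier and a Lipschitz bound on its Fourier transform; note also that your $\psi_\lambda$ is supported in $B(0,2\lambda)$ rather than $B(0,\lambda)$, a trivial slip with no effect on the argument.
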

\begin{proof}
The standard mollifier defined by 
\begin{align*}
    \rho(x) = \begin{cases} 
      c\exp\left(\frac{-1\phantom{xx}}{1-|x|^2}\right) & |x|\leq 1 \\
      0 & |x|>1 
   \end{cases}
\end{align*}
is a non-negative, radial, $C^\infty_c(\mathbb{R}^d)$ function which satisfies
\begin{align*}
  \widehat{\rho}(0) &=  \int_{\mathbb{R}^d} \rho(x)\;dx = 1\\
  |\nabla \widehat{\rho}(\xi)| &=  \left|\int_{\mathbb{R}^d} 2\pi i x e^{2\pi i x\cdot \xi} \rho(x)\;dx \right| \leq 2\pi.
\end{align*}
In particular, $\widehat{\rho}$ is Lipschitz with constant at most $2\pi$, so that
\begin{align*}
  \widehat{\rho}(\xi) \geq 1/2
\end{align*}
for $\xi \in B(0,1/4\pi)$.  The desired function can then be obtained by suitable rescaling:
\begin{align*}
    \widehat{\Psi}(\xi) = 2\widehat{\rho}(\xi/4\pi),
\end{align*}
i.e.
\begin{align*}
    \Psi(x) = 2 (4\pi)^d \rho(4\pi x).
\end{align*}
\end{proof}

\begin{definition}
A distribution~$a\in \mathcal{S}'(\mathbb{R}^d)$ is called a distributional~$(\beta,N)$-atom provided it is supported in a cube~$Q$ with sidelength~$2\ell(Q)$, 
\begin{align*}
< x^\alpha a, \varphi> = 0
\end{align*}
for any homogeneous polynomial $x^\alpha$ with $|\alpha| \leq N$ and $\varphi \equiv 1$ in a neighborhood of $Q$, and satisfies the uniform bound
\begin{equation}\label{HeatExtensionBound}
    \|e^{t\Delta} a\|_{L^\infty(\mathbb{R}^d)} \leq (\ell(Q))^{-\beta}t^{\frac{\beta - d}{2}},\qquad t > 0.
\end{equation}
\end{definition}
As in the introduction, $e^{t\Delta}$ denotes the heat extension, whose action on a summable function $f$ is given by the convolution
\eq{
e^{t\Delta}f(x) = (4\pi t)^{-\frac{d}{2}}\int\limits_{\mathbb{R}^d}f(x-y)e^{-\frac{|y|^2}{4t}}\,dy,\qquad x\in \R^d,
} 
and extended by continuity to distributions. The role of the number~$N$ is minor: if we say~$a$ is a~$\beta$-atom, we mean it is a~$(\beta,N)$ atom for some~$N$; usually, this number is assumed to be sufficiently large (though sometimes~$N=0$ is also interesting). We say that the atom~$a$ is \emph{adapted} to~$Q$ if~$Q$ and~$a$ suit the definition above. 

\begin{rem}\label{DilationRemark}
Let~$a$ be adapted to a cube~$Q$ with sidelength~$2\ell(Q)$ and center~$c_Q$. Then, the dilation~$\tilde{a}$ of the distribution~$a$ given by
\eq{
\tilde{a}(x) = \ell(Q)^{d} a\big(\ell(Q) (x-c_Q)\big),\qquad x\in \R^d,
}
is a distributional~$\beta$-atom adapted to the cube~$[-1,1]^d$, i.e. it satisfies the bound
\begin{equation}\label{HeatExtensionBoundStandard}
    \|e^{t\Delta} \tilde{a}\|_{L^\infty(\mathbb{R}^d)} \leq t^{\frac{\beta - d}{2}},\qquad t > 0.
\end{equation}

\end{rem} 
We have been a little bit informal when writing the argument of a distribution. The formal definition of~$\tilde{a}$ is given by duality:
\eq{
\scalprod{\tilde{a}}{\varphi} = \scalprod{a}{\tilde{\varphi}}, \qquad \tilde{\varphi}(x) = \varphi\Big(\frac{x}{\ell(Q)} + c_Q\Big),\quad \varphi \in \mathcal{S}(\R^d),\ x\in \R^d.
}
In other words, we make a dilation of~$a$ that preserves its total mass if it is finite.

Let~$\Xi\colon \R^d \to \R$ be a smooth function such that~$\hat{\Xi}$ is compactly supported, non-negative, and is equal to~$1$ in a neighborhood of the origin. We set
\eq{
\Xi_k(x) = 2^{kd}\Xi(2^k x), \quad k\in\Z, \ x\in \R^d,
}
and define the `smoothed Littlewood--Paley projectors' via
\eq{\label{Telescopic}
P_{\leq k}[f] = f*\Xi_k;\qquad P_k[f] = P_{\leq k}[f] - P_{\leq k-1}[f].
}
Here~$f$ is an arbitrary tempered distribution and~$k\in\Z$.

Let~$a$ be a distributional~$\beta$-atom  adapted to~$[-1,1]^d$. We split it into the sum
\eq{
a = P_{\leq 0}[a] + \sum\limits_{k > 0}P_k[a]
}
and formulate the individual estimates for the summands. Before that, we note an immediate consequence of~\eqref{HeatExtensionBoundStandard}:
\eq{\label{TrivialBound}
\|P_{\leq k}[a]\|_{L^\infty(\mathbb{R}^d)}\lesssim 2^{(d-\beta)k}.
}
We will also use modified `projectors'~$\tilde{P}_k$. Let~$\tilde{\Xi}$ be a smooth function whose Fourier transform is compactly supported in~$\mathbb{R}^d\setminus \{0\}$ and equals~$1$ on the set~$\set{\xi \in \R^d}{\hat{\Xi}(\xi) \notin \{0,1\}}$.  Define
\eq{
\tilde{P}_k[f] = f*\tilde{\Xi}_k; \qquad \tilde{\Xi}_k(\xi) = 2^{dk}\tilde{\Xi}(2^k\xi), \quad k\in\Z.
}
These operators have similar properties to that of~$P_k$ and they also satisfy
\eq{\label{Composition}
\tilde{P}_k\circ P_k = P_k.
}
\begin{lem}\label{SecondLemma}
Let~$a$ be a distributional~$\beta$-atom adapted to~$Q = [-1,1]$. For any~$M \in \mathbb{N}$, the inequalities
\alg{
\label{FirstOfTwoInequalities}|P_{\leq 0}[a](x)|&\lesssim (1+|x|)^{-M};\\
\label{SecondOfTwoInequalities}|P_{k}[a](x)|&\lesssim 2^{(d-\beta)k}(1+|x|)^{-M}
}
hold true with the multiplicative constants being independent of~$k \geq 0$,~$x\in \R^d$, and the choice of~$a$.
\end{lem}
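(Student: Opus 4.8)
The plan is to prove the pointwise decay estimates \eqref{FirstOfTwoInequalities} and \eqref{SecondOfTwoInequalities} by a duality argument, exploiting the cancellation of the atom~$a$ against polynomials together with the rapid decay of the (smoothed) Littlewood--Paley kernels. Recall that~$P_{\leq 0}[a](x) = \scalprod{a}{\Xi_0(x - \cdot)}$ and~$P_k[a](x) = \scalprod{a}{\Psi^{(k)}(x-\cdot)}$, where~$\Psi^{(k)} = \Xi_k - \Xi_{k-1}$ is a Schwartz function whose Fourier transform is supported in an annulus~$\{|\xi|\sim 2^k\}$ and which therefore has vanishing moments of all orders; moreover~$\Psi^{(k)}$ is an~$L^1$-normalized bump at scale~$2^{-k}$, so~$\|\Psi^{(k)}\|_{L^\infty}\lesssim 2^{kd}$ with derivatives gaining factors of~$2^{k}$ and Schwartz tails decaying like~$(2^k|y|)^{-M}$ for any~$M$.

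First I would treat the region~$|x|\lesssim 1$ (say~$|x|\le 10\sqrt{d}$): here the claimed bound is simply~$|P_{\le 0}[a](x)|\lesssim 1$ and~$|P_k[a](x)|\lesssim 2^{(d-\beta)k}$, and both follow immediately from~\eqref{TrivialBound} together with~\eqref{Composition} (which lets one write~$P_k[a] = \tilde P_k[P_k[a]]$, a convolution of the~$L^\infty$ function~$P_{\le k}[a]-P_{\le k-1}[a]$, bounded by~$2^{(d-\beta)k}$ via~\eqref{TrivialBound}, against an~$L^1$-normalized kernel). So the content is in the far region~$|x|\ge 10\sqrt{d}$, where the test function~$y\mapsto \Xi_0(x-y)$ or~$y\mapsto \Psi^{(k)}(x-y)$ is evaluated far from the support cube~$Q=[-1,1]^d$ of~$a$. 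The idea is to use the moment conditions: subtract from the test function its Taylor polynomial of degree~$N$ centered at the origin. Since~$a$ annihilates all polynomials of degree~$\le N$ (in the distributional sense, against a cutoff~$\varphi\equiv 1$ near~$Q$), we may replace~$\Xi_0(x-\cdot)$ by its Taylor remainder~$R_N(y)$, which on~$Q$ satisfies~$|R_N(y)|\lesssim |y|^{N+1}\sup_{|z|\le |x|+\sqrt d}|\nabla^{N+1}\Xi_0(z)|\lesssim (1+|x|)^{-M}$ for any~$M$, using the Schwartz decay of~$\nabla^{N+1}\Xi_0$ and that~$|x|\ge 10\sqrt d$ forces~$|x-z|\sim |x|$ for~$z$ on the segment from~$0$ to a point of~$Q$. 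Then one bounds~$|\scalprod{a}{R_N}|$ by an appropriate Schwartz seminorm of~$R_N$ times a constant depending on~$a$ only through the atom normalization — here one uses that a distributional~$\beta$-atom adapted to a fixed cube is bounded in~$\mathcal S'$ with a norm controlled by~\eqref{HeatExtensionBoundStandard} (e.g. by testing the heat bound as~$t\to 0$, or by the preceding structure of the paper, one gets that~$a$ lies in a fixed bounded subset of~$\mathcal S'(\R^d)$, so~$|\scalprod{a}{\phi}|\lesssim \sum_{|\alpha|,|\gamma|\le K}\|x^\gamma \partial^\alpha \phi\|_\infty$ for some fixed~$K$). For the~$P_k$ piece one does the same with~$\Psi^{(k)}$ in place of~$\Xi_0$, and now the scaling is favorable: each derivative of~$\Psi^{(k)}$ costs~$2^k\ge 1$ but the Taylor remainder on~$Q$ is~$\lesssim 2^{kd}2^{k(N+1)}(2^k|x|)^{-M'}$, and choosing~$N$ large enough relative to the fixed seminorm order~$K$ and then~$M'$ large enough absorbs all the~$2^{k}$ factors into~$(1+|x|)^{-M}$ while leaving at worst the harmless~$2^{(d-\beta)k}$ (indeed one gets far more decay in~$k$, but~$2^{(d-\beta)k}$ is all that is asserted and all that survives in the~$|x|\lesssim 1$ regime, so we state the uniform bound in that form).

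The main obstacle I anticipate is bookkeeping the interplay between the polynomial order~$N$ in the cancellation condition, the order~$K$ of the Schwartz seminorm needed to estimate~$\scalprod{a}{\cdot}$, and the decay exponent~$M$: one must choose~$N$ (hence implicitly demand the atom have enough vanishing moments — this is exactly why the definition allows~$N$ to be taken large) so that~$N+1 > K + M + d$ or so, and then verify that the Taylor-remainder estimate for a Schwartz function, localized to~$Q$ and with the base point at distance~$\sim|x|$, genuinely produces the gain~$(1+|x|)^{-M}$ uniformly; the geometric subtlety is only that the segment from~$0$ to any~$y\in Q$ stays in~$\{|z|\le \sqrt d\}$, so~$|x - z|\ge |x| - \sqrt d \ge |x|/2$ throughout, which is where~$|x|\ge 10\sqrt d$ is used. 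A secondary but routine point is justifying that applying~$a$ to~$\Xi_0(x-\cdot)$ is legitimate (it is Schwartz in~$y$) and that subtracting the Taylor polynomial is allowed because~$a$ annihilates polynomials after multiplication by a fixed cutoff~$\varphi\equiv 1$ near~$Q$ — one inserts~$\varphi$, notes~$\varphi\cdot R_N = \varphi\cdot(\text{test function} - \text{poly})$ agrees with the test function on~$Q\supset\supp a$, and the error terms supported away from~$Q$ contribute nothing. Once the far-region bound is in hand, combining it with the near-region bound from~\eqref{TrivialBound} and~\eqref{Composition} yields~\eqref{FirstOfTwoInequalities} and~\eqref{SecondOfTwoInequalities} with constants independent of~$k\ge 0$,~$x$, and the atom.
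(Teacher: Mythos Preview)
Your approach is viable but quite different from the paper's, and it leans on two points that are not as immediate as you suggest. The paper does \emph{not} use the moment conditions on~$a$ at all (indeed, the Remark following Proposition~\ref{GrandMaximalAtom} records exactly this). Instead it introduces a compactly supported~$\Psi$ with~$\hat\Psi>0$ on~$\supp\hat\Xi$, so that~$a*\Psi$ is compactly supported (both factors are) and, via the Littlewood--Paley expansion together with~\eqref{TrivialBound} and the rapid decay~$\|\tilde P_l[\Psi_k]\|_{L^1}\lesssim 2^{-L(l-k)}$, satisfies~$\|a*\Psi_k\|_{L^\infty}\lesssim 2^{(d-\beta)k}$. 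Then the factorization~$P_{\le k}[a]=\chi_k*(a*\Psi_k)$ with~$\hat\chi=\hat\Xi/\hat\Psi$ smooth and compactly supported immediately gives the claimed spatial decay from the Schwartz decay of~$\chi$, with no reference to Taylor remainders or the order of~$a$ as a distribution.

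Your Taylor-expansion route can be pushed through, but the step ``$a$ lies in a fixed bounded subset of~$\mathcal S'$ with order~$K$ controlled by~\eqref{HeatExtensionBoundStandard}'' is not in the preceding structure of the paper and is not obtained by ``testing the heat bound as~$t\to 0$'' (that bound blows up). One clean way to supply it is to use~\eqref{TrivialBound} and~\eqref{Composition} directly: write~$\scalprod{a}{\phi}=\scalprod{P_{\le 0}a}{\phi}+\sum_{k>0}\scalprod{P_k a}{\tilde P_k\phi}$, bound the first term by~$\|\phi\|_{L^1}$ and each summand by~$2^{(d-\beta)k}\|\tilde P_k\phi\|_{L^1}\lesssim 2^{-k}\|\phi\|_{\mathcal F}$ for a suitable seminorm, and sum. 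Once you have this, your bookkeeping ``$N+1>K+M+d$'' is overly pessimistic: because Schwartz decay of~$\Xi$ (and of~$\Psi^{(k)}$ after rescaling) lets you take the decay exponent~$M'$ in the remainder estimate as large as you like independently of~$N$, the argument actually works for any~$N\ge 0$, in line with the paper's moment-free proof. The paper's approach buys you exactly this: it sidesteps both the uniform~$\mathcal S'$ bound and the Taylor bookkeeping by reducing to a convolution of a bounded compactly supported function with a Schwartz kernel.
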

\begin{proof}
Let now~$\Psi$ be a compactly supported function such that~$\hat{\Psi}$ attains positive values on the support of~$\hat{\Xi}$. Then, in particular,
\begin{equation}\label{eq110'}
\begin{aligned}
\|\tilde{P}_k[\Psi]\|_{L^1(\mathbb{R}^d)}&\leq\|(1+|x|^2)^d\tilde{P}_k[\Psi](x)\|_{L^\infty(\mathbb{R}^d)} \\
& \lesssim \Big\|(1-\Delta)^{d}\big[\hat{\tilde{\Xi}}(2^{-k}\xi)\hat{\Psi}(\xi)\big]\Big\|_{L^1(\mathbb{R}^d)}\lesssim 2^{-Lk},\qquad k \geq 0,
\end{aligned}
\end{equation}
for any natural number~$L$. Consequently,
\mlt{\label{eq110}
\|a*\Psi\|_{L^\infty(\mathbb{R}^d)}\leq \big\|P_{\leq 0}\big[a*\Psi\big]\big\|_{L^\infty(\mathbb{R}^d)}+\sum\limits_{k> 0}\big\|P_{ k}\big[a*\Psi\big]\big\|_{L^\infty(\mathbb{R}^d)}\\
\Eeqref{Composition} \big\|P_{\leq 0}\big[a*\Psi\big]\big\|_{L^\infty(\mathbb{R}^d)}+\sum\limits_{k> 0}\big\|P_{ k}\big[a\big]*\tilde{P}_k\big[\Psi\big]\big\|_{L^\infty(\mathbb{R}^d)} \LseqrefTwo{TrivialBound}{eq110'} \sum\limits_{k\geq 0}2^{-Lk}2^{(d-\beta)k}
}
for any~$L \in \N$. The choice of~$L = d+1$ makes this series converge. We also note that~$a*\Psi$ has bounded support. Since
\eq{
P_{\leq 0}[a] = \chi*(a*\Psi),
}
where~$\hat{\chi}$ is a smooth compactly supported function, the bound~\eqref{eq110} together with the information about the support of~$a*\Psi$ leads to~\eqref{FirstOfTwoInequalities}.

The proof of~\eqref{SecondOfTwoInequalities} follows the same lines: instead of~$a*\Psi$, consider~$a*\Psi_k$ (here~$\Psi_k(x) = 2^{dk}\Psi(2^kx)$) and perform the same steps: 
\eq{\label{eq112}
\|a*\Psi_k\|_{L^\infty(\mathbb{R}^d)}\leq \|P_{\leq k}\big[a*\Psi_k\big]\|_{L^\infty(\mathbb{R}^d)}+\sum\limits_{l> k}\|P_{l}\big[a*\Psi_k\big]\|_{L^\infty(\mathbb{R}^d)} \lesssim  \sum\limits_{l\geq k}2^{-L(l-k)}2^{(d-\beta)l},
}
in the last inequality we have used
\eq{
\|\tilde{P}_l[\Psi_k]\|_{L^1(\mathbb{R}^d)} \lesssim 2^{-L(l-k)},\qquad l \geq k;
}
this follows from~\eqref{eq110'} by rescaling. Then we use
\eq{
P_{\leq k}[a] = \chi_k*(a*\Psi_k),
}
where~$\chi_k(x) = 2^{dk}\chi(2^k x)$, to derive
\eq{
|P_{\leq k}[a](x)|\lesssim 2^{(d-\beta)k}(1+|x|)^{-M}
}
from~\eqref{eq112}. The latter inequality clearly implies~\eqref{SecondOfTwoInequalities} via~\eqref{Telescopic}.
\end{proof}

Let~$\Phi$ be a Schwartz function and let ~$\Phi_t(x) = t^{-d}\Phi(x/t)$ denote the $L^1$ dilations.

\begin{st}\label{GrandMaximalAtom}
   Let~$a\in \mathcal{S}'(\R^d)$ be a distributional ~$\beta$-atom adapted to a cube~$Q$.  Then, for any~$M>d$ there exists a finite collection~$\mathcal{F}$ of Schwartz seminorms such that if~$\|\Phi\|_{\mathcal{F}} \leq 1$, then
    \eq{%\label{GrandMaximalBound}
        \label{GMB1}t^{d-\beta}|a*\Phi_t(x)|\lesssim (\ell(Q))^{-\beta}\Big(1+ \frac{\dist(x,Q)}{\ell(Q)}\Big)^{-M},\quad t \leq \ell(Q);
        %\label{GMB2}&t^{d}|a*\Phi_t(x)|\lesssim \Big(1+ \frac{\dist(x,Q)}{\ell(Q)}\Big)^{-M},\quad t \geq \ell(Q)
    }
    for all~$x\in \R^d$;   the bound is independent of the particular choice of~$a$.
\end{st}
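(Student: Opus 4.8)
To prove Proposition~\ref{GrandMaximalAtom}, the plan is to normalize the atom, split it into Littlewood--Paley pieces, and estimate each piece after convolution with $\Phi_t$, using the pointwise decay of Lemma~\ref{SecondLemma} together with the gain one obtains when a frequency-localized function is convolved against a bump at a mismatched scale. By Remark~\ref{DilationRemark} it suffices to treat an atom $a$ adapted to $Q=[-1,1]^d$: the general case follows by translating by $c_Q$ and dilating by $\ell(Q)$, under which both sides of~\eqref{GMB1} transform consistently, $t\leq\ell(Q)$ becomes $t\leq1$, and $\dist(x,Q)/\ell(Q)$ becomes the distance of the rescaled point to $[-1,1]^d$. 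Since $(1+|x|)^{-M}\simeq(1+\dist(x,[-1,1]^d))^{-M}$, it is enough to prove that for $0<t\leq1$ and $M>d$,
$$t^{d-\beta}\,|a*\Phi_t(x)|\lesssim(1+|x|)^{-M},\qquad x\in\R^d,$$
whenever $\|\Phi\|_{\mathcal F}\leq1$ for a suitable finite family $\mathcal F$ of Schwartz seminorms. We may also assume $\beta<d$: for $\beta=d$ one has $a\in L^\infty$ with $\|a\|_{L^\infty}\lesssim1$ (cf. Example~\ref{hardyatoms}) and $t^{d-\beta}=1$, and the desired bound is the classical pointwise grand maximal estimate for $\mathrm H^1$-atoms, which follows directly from $\supp a\subset Q$ and the rapid decay of $\Phi$.

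Fix $N=M+d+1$; Lemma~\ref{SecondLemma} provides $|P_{\leq0}[a](y)|\lesssim(1+|y|)^{-N}$ and $|P_k[a](y)|\lesssim2^{(d-\beta)k}(1+|y|)^{-N}$ for $k>0$. Write $a=P_{\leq0}[a]+\sum_{k>0}P_k[a]$ in $\mathcal S'$, so that $a*\Phi_t=P_{\leq0}[a]*\Phi_t+\sum_{k>0}P_k[a]*\Phi_t$; once the bounds below are in place the series converges uniformly and represents $a*\Phi_t$ pointwise. For the low-frequency term, a routine splitting of the convolution integral at $|x-y|\simeq|x|$ (using $\|\Phi_t\|_{L^1}\lesssim1$ and $|\Phi_t(z)|\lesssim t^{N}|z|^{-N-d}\leq|z|^{-N-d}$, which holds since $t\leq1$) yields $|P_{\leq0}[a]*\Phi_t(x)|\lesssim(1+|x|)^{-M}$, and this survives multiplication by $t^{d-\beta}\leq1$. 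For the high-frequency terms, use~\eqref{Composition} in the form $P_k[a]=P_k[a]*\tilde\Xi_k$ to write $P_k[a]*\Phi_t=P_k[a]*(\tilde\Xi_k*\Phi_t)$; invoking $(1+|x-y|)^{-N}\leq(1+|y|)^{N}(1+|x|)^{-N}$, the same splitting gives
$$|P_k[a]*\Phi_t(x)|\lesssim2^{(d-\beta)k}(1+|x|)^{-M}\int_{\R^d}(1+|y|)^{N}\,|\tilde\Xi_k*\Phi_t(y)|\,dy.$$

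The crux is to show that, for every $L\in\N$,
$$\int_{\R^d}(1+|y|)^{N}\,|\tilde\Xi_k*\Phi_t(y)|\,dy\lesssim\min\{1,(2^kt)^{-L}\},$$
with a constant depending only on $d$, $M$, $L$, and finitely many Schwartz seminorms of $\Phi$. Rescaling $\tilde\Xi_k*\Phi_t(y)=2^{kd}(\tilde\Xi*\Phi_s)(2^ky)$ with $s=2^kt$ and using $2^{-k}\leq1$ (as $k>0$) reduces this to $\int(1+|u|)^{N}\,|\tilde\Xi*\Phi_s(u)|\,du\lesssim\min\{1,s^{-L}\}$. Passing to the Fourier side, where $\widehat{\tilde\Xi*\Phi_s}(\xi)=\hat{\tilde\Xi}(\xi)\,\hat\Phi(s\xi)$, one bounds this weighted $L^1$ norm by $\|(1-\Delta)^{A}[\hat{\tilde\Xi}(\xi)\hat\Phi(s\xi)]\|_{L^1}$ for a sufficiently large $A=A(N,d)$; on $\supp\hat{\tilde\Xi}$ one has $|\xi|\simeq1$ (since $\hat{\tilde\Xi}$ is compactly supported away from the origin), so each of the at most $2A$ derivatives landing on $\hat\Phi(s\xi)$ contributes a factor $s$, while $\hat\Phi$ and its derivatives — evaluated at $|s\xi|\simeq s$ and controlled by finitely many Schwartz seminorms of $\Phi$ — decay faster than any power; the finitely many powers of $s$ are therefore absorbed, giving $\lesssim\min\{1,s^{-L}\}$. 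Feeding this back and summing in $k$,
$$t^{d-\beta}\sum_{k>0}2^{(d-\beta)k}\min\{1,(2^kt)^{-L}\}\lesssim t^{d-\beta}\Bigl(\sum_{2^k\leq1/t}2^{(d-\beta)k}+t^{-L}\sum_{2^k>1/t}2^{(d-\beta-L)k}\Bigr)\lesssim t^{d-\beta}\cdot t^{-(d-\beta)}=1,$$
where $0<d-\beta$ and the choice $L>d-\beta$ make each geometric sum comparable to its term with $2^k\simeq1/t$. Together with the low-frequency contribution this gives $t^{d-\beta}|a*\Phi_t(x)|\lesssim(1+|x|)^{-M}$; undoing the normalization yields~\eqref{GMB1}, and the seminorms used along the way (to control $\|\Phi\|_{L^1}$, the pointwise decay of $\Phi$, and finitely many weighted derivatives of $\hat\Phi$) form the required family $\mathcal F$.

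I expect the mismatched-scales $L^1$ estimate, and its interplay with the prefactor $t^{d-\beta}$, to be the main obstacle. It is the step that forces the choice of $\mathcal F$, that genuinely uses the hypothesis $t\leq\ell(Q)$ (so that the sum over $k$ with $2^k\leq1/t$ is a finite geometric series), and that makes the regime $\beta<d$ essential: at $\beta=d$ the corresponding sum degenerates into a logarithmically divergent one, and one must instead fall back on the classical $\mathrm H^1$-atom bound recorded in the reduction step.
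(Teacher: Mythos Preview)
Your proof is correct and follows the same route as the paper: reduce to $Q=[-1,1]^d$, decompose $a$ via Littlewood--Paley, apply Lemma~\ref{SecondLemma} for pointwise control of the pieces, and sum using almost-orthogonality when $2^kt>1$. The only cosmetic difference is that where the paper invokes the pointwise bound of Lemma~\ref{AO} on $\tilde P_k[\Phi_t]$ and then convolves, you package the same estimate as a weighted $L^1$ bound on $\tilde\Xi_k*\Phi_t$ combined with Peetre's inequality to extract the $(1+|x|)^{-M}$ decay; your separate handling of $\beta=d$ is also a careful touch, since the low-frequency geometric sum $\sum_{2^k\le 1/t}2^{(d-\beta)k}$ indeed degenerates logarithmically there.
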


The requirement on Schwartz seminorms of~$\Phi$ means there exists a large natural number~$\nu$ such that
\eq{
\Big|\frac{\partial^\gamma \Phi}{\partial x^\gamma}(x)\Big| \leq (1+|x|)^{-\nu}
}
for all~$x\in \R^d$ and all multi-indices~$\gamma$ with~$|\gamma|\leq \nu$.

\begin{rem}
As we will see from the proof, the moment conditions on~$a$ are unnecessary in Proposition~\ref{GrandMaximalAtom}.
\end{rem}

The proof of Proposition~\ref{GrandMaximalAtom} rests on Lemma~\ref{SecondLemma}. We will also need another lemma that expresses an almost-orthogonality principle.
\begin{lem}\label{AO}
Let~$k\in\mathbb{N}\cup \{0\}$ and~$t \in \R_+$ be such that~$2^k t \geq 1$. Then, for any natural numbers~$M'$ and~$M''$,
\eq{\label{AlmostOrthogonality}
|\tilde{P}_k[\Phi_t](x)| \lesssim (2^k t)^{-M''}t^{-d}(1+|x|/t)^{-M'},\qquad x\in \R^d.
}
\end{lem}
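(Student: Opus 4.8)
The plan is to eliminate the parameter $t$ by scaling and reduce the statement to a clean high‑versus‑low frequency estimate. Put $s := 2^k t \geq 1$ and write $(\tilde{\Xi})_{1/s}(y) = s^d\tilde{\Xi}(sy)$ for the $L^1$‑normalized dilation of $\tilde{\Xi}$, so that $\widehat{(\tilde{\Xi})_{1/s}}(\xi) = \widehat{\tilde{\Xi}}(\xi/s)$. A change of variables in the convolution $\tilde{P}_k[\Phi_t] = \Phi_t * \tilde{\Xi}_k$ gives the identity $t^d\,\tilde{P}_k[\Phi_t](tx) = \big(\Phi * (\tilde{\Xi})_{1/s}\big)(x)$, so it suffices to prove $\big|\big(\Phi * (\tilde{\Xi})_{1/s}\big)(x)\big| \lesssim s^{-M''}(1+|x|)^{-M'}$ for $s \geq 1$, with a constant depending only on $M'$, $M''$, $d$, and finitely many Schwartz seminorms of $\Phi$ (the function $\tilde{\Xi}$ being fixed); substituting $x \mapsto x/t$ then recovers exactly the claimed bound with the factors $t^{-d}$, $(2^k t)^{-M''}$, $(1+|x|/t)^{-M'}$.

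For the reduced estimate I would work on the Fourier side. Since $\widehat{\tilde{\Xi}}$ is smooth and compactly supported in $\R^d\setminus\{0\}$, say in $\set{\xi}{c_1 \leq |\xi| \leq c_2}$, the symbol $\xi \mapsto \widehat{\Phi}(\xi)\widehat{\tilde{\Xi}}(\xi/s)$ of $\Phi * (\tilde{\Xi})_{1/s}$ is smooth, compactly supported, and vanishes outside the annulus $\set{\xi}{c_1 s \leq |\xi| \leq c_2 s}$; in particular $1+|\xi| \approx s$ on its support because $s \geq 1$. Then, for each multi‑index $\alpha$, integrating by parts $|\alpha|$ times in the Fourier inversion formula yields $\big|x^\alpha\big(\Phi * (\tilde{\Xi})_{1/s}\big)(x)\big| \lesssim \big\|\partial_\xi^\alpha\big[\widehat{\Phi}(\xi)\widehat{\tilde{\Xi}}(\xi/s)\big]\big\|_{L^1}$ (no boundary terms, the symbol being compactly supported). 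Expanding by the Leibniz rule produces terms $s^{-|\gamma|}(\partial^{\alpha-\gamma}\widehat{\Phi})(\xi)\,(\partial^\gamma\widehat{\tilde{\Xi}})(\xi/s)$, each supported in that annulus, where $(\partial^\gamma\widehat{\tilde{\Xi}})(\xi/s)$ is bounded and, crucially, $|(\partial^{\alpha-\gamma}\widehat{\Phi})(\xi)| \lesssim (1+|\xi|)^{-L} \approx s^{-L}$ for every $L\in\N$ by the Schwartz decay of $\widehat{\Phi}$. Discarding the harmless factors $s^{-|\gamma|}$ and integrating over the annulus (of measure $\approx s^d$) gives $\big\|\partial_\xi^\alpha[\,\cdot\,]\big\|_{L^1} \lesssim s^{d-L}$; choosing $L = d+M''$ and summing the resulting bounds over $|\alpha|\leq M'$ (including $\alpha=0$), together with $(1+|x|)^{M'} \lesssim \sum_{|\alpha|\leq M'}|x^\alpha|$, gives the reduced estimate and hence the lemma.

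I do not anticipate a serious obstacle: this is essentially bookkeeping for a standard almost‑orthogonality estimate. The single point that must be handled correctly is the source of the gain: the power $(2^k t)^{-M''}$ comes precisely from the hypothesis that $\widehat{\tilde{\Xi}}$ is supported away from the origin — so that $|\xi|\approx s$ on the support of the product symbol, where $\widehat{\Phi}$ is already small — together with $2^k t \geq 1$, which turns $s^{-L}$ into genuine decay; if $\widehat{\tilde{\Xi}}$ were supported near $0$ there would be no gain. The other thing to watch is that all implied constants depend only on finitely many seminorms of $\Phi$ and on $M', M''$, since that uniformity is what is used when the lemma is applied with $\Phi$ ranging over a bounded family in the proof of Proposition~\ref{GrandMaximalAtom}. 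A purely physical‑space argument is also available — using that all moments of $\tilde{\Xi}$ vanish, Taylor‑expand $\Phi_t$ against $\tilde{\Xi}_k$ and estimate the remainder — but the Fourier argument above is cleaner, so I would use it.
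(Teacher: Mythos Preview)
Your proposal is correct and follows essentially the same route as the paper: both rescale to remove $t$ (your parameter $s=2^kt\geq 1$ is the reciprocal of the paper's $s<1$), then work on the Fourier side, exploiting that $\hat{\tilde{\Xi}}$ is supported away from the origin so that $\hat\Phi$ is already rapidly small on the support of the product symbol. The only cosmetic difference is that you multiply by monomials $x^\alpha$ and integrate by parts, whereas the paper applies $(1-\Delta)^{M'}$ to the symbol and uses the $L^1\to L^\infty$ bound for the Fourier transform; these are equivalent bookkeeping choices.
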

\begin{proof}
By scaling invariance, it suffices to prove the inequality
\eq{
\Big|\tilde{\Xi}_s*\Phi(y)\Big|\lesssim s^{M''}(1+|y|^2)^{-M'},\qquad s < 1,
} 
and~$\tilde{\Xi}_s(y) = s^{-d}\tilde{\Xi}(s^{-1}y)$. By the ~$L^1(\mathbb{R}^d) \to L^\infty(\mathbb{R}^d)$ boundedness of the Fourier transform, it suffices to prove
\eq{
\Big\|(1-\Delta)^{M'}\big[\hat{\tilde{\Xi}}(s\xi)\hat{\Phi}(\xi)\big]\Big\|_{L^1(\mathbb{R}^d)} \lesssim s^{M''},\qquad s < 1,
}
which follows from the requirement that~$\Phi$ is a Schwartz function and the conditions on the support of~$\hat{\tilde{\Xi}}$ (we have used a similar inequality in~\eqref{eq110'}).
\end{proof}
\begin{proof}[Proof of Proposition~\ref{GrandMaximalAtom}.]
First of all, we may dilate and shift~$a$ if needed and assume~$Q = [-1,1]^d$, see Remark~\ref{DilationRemark}. Then~\eqref{GMB1} reduces to
\eq{\label{GrandMaximalBound2}
        t^{d-\beta}|a*\Phi_t(x)|\lesssim (1+ |x|)^{-M},\quad t \leq 1.
        %t^{d}|a*\Phi_t(x)|&\lesssim (1+ |x|)^{-M},\quad t > 1.
    }
We next use the assumption $M > d$.

For the smoothed Littlewood--Paley projectors introduced above one has
\eq{\label{LPSplitting}
 t^{d-\beta}|a*\Phi_t(x)| \leq t^{d-\beta}|P_{\leq 0}[a]*\Phi_t(x)|+ \sum\limits_{k =1}^\infty t^{d-\beta}|P_{k}[a]*\Phi_t(x)|.
}
Let us consider the summands with~$2^kt \leq 1$ first (low frequency summands). By~\eqref{SecondOfTwoInequalities}, 
\eq{\label{LPPartEst}
|P_k[a](x)|\lesssim 2^{(d-\beta)k}(1+|x|)^{-M}.
}
Since~$t \leq 1$,
\eq{\label{eq29}
|\Phi_t|*(1+|x|)^{-M}\lesssim (1+|x|)^{-M},
}
and this bound depends on a finite number of Schwartz seminorms of~$\Phi$ in the sense that it follows from the inequality
\eq{
|\Phi(x)| \leq (1+|x|)^{-M},\qquad x\in \R^d
} 
since we have assumed~$M > d$. Using~\eqref{LPPartEst} and~\eqref{eq29}, 
we get
\eq{
\sum\limits_{k\colon 2^{k} t \leq 1} t^{d-\beta}|P_{k}[a]*\Phi_t(x)|\LseqrefTwo{SecondOfTwoInequalities}{eq29} t^{d-\beta}\sum\limits_{k\colon 2^{k} t \leq 1} 2^{(d-\beta)k}(1+|x|)^{-M}\lesssim (1+|x|)^{-M}.
}
The bound for the term~$t^{d-\beta}|P_{\leq 0}[a]*\Phi_t(x)|$ is completely similar (we use~\eqref{FirstOfTwoInequalities} instead of~\eqref{SecondOfTwoInequalities}). Thus,~\eqref{GrandMaximalBound2} holds true for the low frequency part of the sum~\eqref{LPSplitting}.

Note that since
\eq{\label{eq432}
t^{-d}(1+|x|/t)^{-M}*(1+|x|)^{-M} \lesssim (1+|x|)^{-M},\qquad M> d,\ t \leq 1,
}
Lemma~\ref{AO} with~$M'=M'' :=M$ yields the desired bound of the high frequency part:
\mlt{
\sum\limits_{k\colon 2^{k} t > 1} t^{d-\beta}|P_{k}[a]*\Phi_t(x)|\Eeqref{Composition}\sum\limits_{k\colon 2^{k} t > 1} t^{d-\beta}|P_{k}[a]*\tilde{P}_k[\Phi_t](x)|\\ \LseqrefThree{SecondOfTwoInequalities}{AlmostOrthogonality}{eq432} t^{d-\beta}\sum\limits_{k\colon 2^{k} t > 1}2^{(d-\beta)k} (2^k t)^{-M}(1+|x|)^{-M} \lesssim (1+|x|)^{-M},
}
since~$M >d > d-\beta$. Thus,~\eqref{GrandMaximalBound2} holds true for the high frequency part of the sum as well.

%The case~$t \leq 1$ is considered.
\end{proof}

\begin{st}\label{GrandMaximalAtom2}
    Let~$a\in \mathcal{S}'(\R^d)$ be a distributional~$(\beta,N)$-atom adapted to a cube~$Q$.  Then, for any~$s > 0$ there exists a finite collection~$\mathcal{F}$ of Schwartz seminorms such that if~$\|\Phi\|_{\mathcal{F}} \leq 1$, then
    \eq{%\label{GrandMaximalBound}
        %\label{GMB1}t^{d-\beta}|a*\Phi_t(x)|\lesssim (\ell(Q))^{-\beta}\Big(1+ \frac{\dist(x,Q)}{\ell(Q)}\Big)^{-M},\quad t \leq \ell(Q);
        \label{GMB2}t^{d-s}|a*\Phi_t(x)|\lesssim (\ell(Q))^{-s} \Big(1+ \frac{\dist(x,Q)}{\ell(Q)}\Big)^{-N-1-s},\quad t \geq \ell(Q),
    }
    for all~$x\in \R^d$;   the bound is independent of the particular choice of~$a$.
\end{st}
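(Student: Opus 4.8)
The plan is to reduce, as in the proof of Proposition~\ref{GrandMaximalAtom}, to the normalized situation~$Q = [-1,1]^d$ via Remark~\ref{DilationRemark}, so that the target becomes
\eq{
t^{d-s}|a*\Phi_t(x)| \lesssim (1+|x|)^{-N-1-s},\qquad t \geq 1,
}
with the multiplicative constant depending only on a finite collection of Schwartz seminorms of~$\Phi$. The regime here is the \emph{opposite} of the one treated before ($t$ large instead of~$t$ small), so the Littlewood--Paley decomposition of~$a$ is less useful; instead I would work directly with the convolution and exploit the moment-cancellation conditions on~$a$, which are now essential (in contrast to Proposition~\ref{GrandMaximalAtom}).

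First I would separate the spatial regions. For~$x$ with~$\dist(x,Q) \lesssim t$, i.e. roughly~$|x| \lesssim t$, the claimed bound is equivalent to~$t^{d-s}|a*\Phi_t(x)| \lesssim t^{-N-1-s}$, i.e.~$|a*\Phi_t(x)| \lesssim t^{-d-N-1}$, and this is where the vanishing moments enter: since~$a$ annihilates all polynomials of degree~$\leq N$ (tested against functions that are~$\equiv 1$ near~$Q$), one writes~$a*\Phi_t(x) = \langle a, \Phi_t(x-\cdot)\rangle$ and subtracts the degree-$N$ Taylor polynomial of~$y \mapsto \Phi_t(x-y)$ at the center~$c_Q = 0$; because~$\supp a \subset Q$, the Taylor remainder is controlled by~$t^{-d-N-1}$ times $|y|^{N+1} \lesssim 1$ on~$Q$, and the factor~$t^{-d-N-1}$ comes from differentiating~$\Phi_t$ exactly~$N+1$ times (each derivative costs~$t^{-1}$, plus the~$t^{-d}$ normalization). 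To make this rigorous at the level of distributions rather than measures, I would realize~$a$ as a finite sum of derivatives of its heat extension~$e^{t_0\Delta}a$ at some fixed scale~$t_0 \asymp 1$ (which is bounded by~\eqref{HeatExtensionBoundStandard}), or more cleanly split~$a = P_{\leq 0}[a] + \sum_{k>0} P_k[a]$ and note each piece is an honest function with the decay from Lemma~\ref{SecondLemma}; the moment conditions then transfer to each (tail of) the pieces, and one sums the resulting geometric-type series.

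Second, for~$|x| \gtrsim t$ I would use the spatial decay of~$\Phi$ itself: since~$\supp a$ lies in the bounded cube~$Q$ and~$t \geq 1$, for~$y \in Q$ one has~$|x-y| \gtrsim |x|$, so~$|\Phi_t(x-y)| = t^{-d}|\Phi((x-y)/t)| \lesssim t^{-d}(1+|x|/t)^{-\nu} \lesssim t^{-d}(t/|x|)^{\nu}$ for~$\nu$ as large as we wish (controlled by the chosen seminorms). Pairing with~$a$ — again via the Littlewood--Paley pieces of~$a$, each of which has controlled~$L^1$-type mass against a bounded weight by Lemma~\ref{SecondLemma} — gives~$|a*\Phi_t(x)| \lesssim t^{-d}(t/|x|)^{\nu}$, and choosing~$\nu$ large enough (say~$\nu \geq N+1+s+d$) and multiplying by~$t^{d-s}$ yields~$t^{-s}(t/|x|)^{\nu} \lesssim (1+|x|)^{-N-1-s}$ in this regime since~$t \leq |x|$. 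Combining the two regimes gives~\eqref{GMB2}.

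The main obstacle I anticipate is the careful bookkeeping needed to make the vanishing-moment argument work at the level of tempered distributions: one must justify subtracting a Taylor polynomial inside~$\langle a, \cdot\rangle$ and control the remainder uniformly, which is cleanest if one first writes~$a$ as a sum of smooth pieces~$P_k[a]$ with known pointwise decay (Lemma~\ref{SecondLemma}) and tracks how the cancellation and the factors~$2^{(d-\beta)k}$ interact with the large power of~$t$; ensuring the resulting double sum over~$k$ and over dyadic ranges of~$t$ converges, and that all constants depend only on finitely many Schwartz seminorms of~$\Phi$, is the delicate point. The~$s$-dependence (as opposed to the fixed~$d-\beta$ in Proposition~\ref{GrandMaximalAtom}) is harmless since~$s$ only affects which power of~$t$ we factor out and hence how large~$N$ and~$\nu$ must be chosen.
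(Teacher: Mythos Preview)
Your plan for the region $|x|\lesssim t$ is sound and, once you peel off $\sum_{k>0}P_k[a]$ by almost orthogonality and run the Taylor argument on $f=P_{\leq 0}[a]$, it coincides with the paper's treatment of that piece. The genuine gap is in the region $|x|\gtrsim t$: the inequality you assert,
\[
t^{-s}(t/|x|)^{\nu}\ \lesssim\ (1+|x|)^{-N-1-s},\qquad 1\leq t\lesssim |x|,
\]
is false. Rewrite the left-hand side as $t^{\nu-s}|x|^{-\nu}$ and take $t$ comparable to $|x|$ (which is allowed in this regime): you obtain only a bound of order $|x|^{-s}$, independently of how large $\nu$ is. No choice of $\nu$ repairs this, because the spatial decay of $\Phi$ alone carries no information about the $N$ vanishing moments of $a$; even taking the minimum of your two bounds $t^{-N-1-s}$ and $t^{-s}(t/|x|)^{\nu}$ and then optimizing over $t\geq 1$ gives decay $|x|^{-\nu(N+1+s)/(\nu+N+1)}$, which for every finite $\nu$ is strictly weaker than $|x|^{-N-1-s}$.

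The missing idea is that the Taylor/moment argument must be used for \emph{all} $x$, not only for $|x|\lesssim t$: when $|y|\leq |x|/2$, the Lagrange form of the remainder of $\Phi\big((x-y)/t\big)$ expanded to order $N$ in $y$ about $0$ picks up an additional factor $(1+|x|/t)^{-\eta}$, coming from the Schwartz decay of $\partial^{N+1}\Phi$ evaluated at an intermediate point of size $\sim |x|/t$. With $\eta=N+1+s$ this factor combines with $t^{-N-1-s}$ to produce exactly $(1+|x|)^{-N-1-s}$. Accordingly the paper does not split in $x$ versus $t$; after reducing to $f=P_{\leq 0}[a]$ (the tail $\sum_{k>0}P_k[a]$ is disposed of globally by almost orthogonality, as you anticipated), it splits the convolution integral in the integration variable $y$ relative to $|x|$: on $\{2|y|\geq |x|\}$ one uses the rapid decay of $f$ from Lemma~\ref{SecondLemma}, and on $\{2|y|\leq |x|\}$ one runs Taylor plus moments with the refined remainder above.
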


\begin{proof}
By Remark~\ref{DilationRemark}, we may assume~$Q = [-1,1]^d$. We wish to prove
\eq{\label{eq132}
t^{d-s}|a*\Phi_t(x)|\lesssim (1+ |x|)^{-N-1-s},\qquad x\in \R^d,\ t > 1.
}
Let~$M$ be a large number. Before passing to the proof, we modify~\eqref{AlmostOrthogonality} in the case~$t > 1$ a little bit:
\mlt{
(2^kt)^{-M''}t^{-d}(1+|x|/t)^{-M'} \Lref{\scriptscriptstyle 2^k \geq 1, t > 1} (2^kt)^{-M''+M'}t^{-M'}(1+|x|/t)^{-M'}\\ = (2^kt)^{-M''+M'}(t+|x|)^{-M'} \leq (2^kt)^{-M''+M'}(1+|x|)^{-M'}.
}
Thus, if we plug~$M''= 2M$ and~$M'= M$ into the latter estimate, we get from~\eqref{AlmostOrthogonality} that
\eq{\label{SecondAlmostOrthogonality}
|\tilde{P}_k[\Phi_t](x)| \lesssim(2^kt)^{-M}(1+|x|)^{-M},\qquad t > 1,\ k\in\N\cup\{0\}.
}

Let~$f = P_{\leq 0}[a]$. We may replace~$a$ with~$f$ in~\eqref{eq132} and prove
\eq{\label{EstimateForf}
t^{d-s}|f*\Phi_t(x)|\lesssim (1+ |x|)^{-N-1-s},\qquad |x|\in \R^d,\ t > 1,
}
since the difference between~$a$ and~$f$, i.e.,~$P_{> 0} [a]$, admits an estimate similar to the estimate for the case~$t \leq  1, 2^k t > 1$ in Proposition~\ref{GrandMaximalAtom}: 
\mlt{
t^{d-s}|(a-f)*\Phi_t(x)| \leq t^{d-s}\sum\limits_{k> 0}|P_k[a]*\Phi_t(x)|\\ \LseqrefTwo{FirstOfTwoInequalities}{SecondAlmostOrthogonality} \sum\limits_{k> 0}t^{d-s} 2^{(d-\beta)k}(2^kt)^{-M}(1+|x|)^{-M} \lesssim t^{d - s- M}(1+|x|)^{-M}
}
for any~$M \in \N$.

Now we concentrate on~\eqref{EstimateForf}. Note that~$f*\Phi_t$ is a convolution of two summable functions and may be treated as an integral. We split this integral into two integrals:
\eq{\label{Splitting}
f*\Phi_t(x) = t^{-d}\int\limits_{2|y| \geq |x|}f(y)\Phi\Big(\frac{x-y}{t}\Big)\,dy + t^{-d}\int\limits_{2|y| \leq |x|}f(y)\Phi\Big(\frac{x-y}{t}\Big)\,dy,
}
and estimate them individually. 

The estimate for the first part is straightforward (we use Lemma~\ref{SecondLemma}):
\eq{
t^{-d}\Big|\int\limits_{2|y| \geq |x|}f(y)\Phi\Big(\frac{x-y}{t}\Big)\,dy\Big| \lesssim
t^{-d}\int\limits_{2|y| \geq |x|} (1+|y|)^{-M}\,dy \lesssim t^{-d}(1+|x|)^{-M+d},
}
which, after multiplication by~$t^{d-s}$, does not exceed the right hand side of~\eqref{EstimateForf}, provided~$M$ is sufficiently large (recall~$s \geq 0$).

The bound for the second integral in~\eqref{Splitting} is a little bit more involved. We will use the Taylor formula
\eq{\label{Taylors}
\Phi\Big(\frac{x-y}{t}\Big) = \sum\limits_{|\gamma|\leq N}\frac{1}{\gamma!}\frac{\partial^\gamma\Phi}{\partial z^\gamma}\Big(\frac{x}{t}\Big)\Big(\frac{-y}{t}\Big)^\gamma + O\Big(\Big(\frac{|y|}{t}\Big)^{N+1}\Big(1+\frac{|x|}{t}\Big)^{-\eta}\Big),
}
which follows from Lagrange's representation of the remainder, the inequality~$|y|\leq |x|/2$, and the boundedness of a Schwartz seminorm of~$\Phi$. Here~$\eta$ is an arbitrary non-negative number (we will specify it later). 

First, we deal with the integrals that arise from plugging the monomial terms from~\eqref{Taylors} into the second integral in~\eqref{Splitting}. Note that~$f$ satisfies the cancellation condition
\eq{
\int\limits_{\R^d}f(y)y^\gamma\,dy = 0, \qquad |\gamma| \leq N,
}
since~$a$ does,~$f = P_{\leq 0}[a]$, and the symbol of the Fourier multiplier~$P_{\leq 0}$, that is,~$\hat{\Xi}$, equals one in a neighborhood of the origin. Therefore, by Lemma~\ref{SecondLemma},
\eq{
t^{-d}\Big|\int\limits_{2|y| \leq |x|}f(y)(y/t)^\gamma\,dy\Big| =
t^{-d}\Big|\int\limits_{2|y| \geq |x|}f(y)(y/t)^\gamma\,dy\Big|\lesssim t^{-d-\gamma}(1+|x|)^{-M + d + \gamma}.
}
Thus, it suffices to take~$M$ sufficiently large.

It remains to estimate the contribution of the remainder term in~\eqref{Taylors} to the second integral in~\eqref{Splitting}. Here we cannot use cancellations and perform the estimates in a more straightforward way (relying on Lemma~\ref{SecondLemma} again):
\mlt{
t^{-d} \int\limits_{2|y| \leq |x|}|f(y)|\Big(\frac{|y|}{t}\Big)^{N+1}\Big(1+\frac{|x|}{t}\Big)^{-\eta}\,dy \lesssim\\ t^{-(d+N+1)}\Big(1+\frac{|x|}{t}\Big)^{-\eta}\int\limits_{2|y| \leq |x|} (1+|y|)^{-M} |y|^{N+1}\,dy\\ \lesssim t^{-d-N-1}\Big(1+ \frac{|x|}{t}\Big)^{-\eta}; \quad M > N+1+d.
}
Thus, it suffices to verify the inequality
\eq{\label{NumericalInequality}
t^{-N-1}(1+|x|/t)^{-\eta} \lesssim t^{s}(1+|x|)^{-N-1-s}.
}
We use that~$t > 1$ and estimate the left hand side:
\eq{
t^{-N-1}(1+|x|/t)^{-\eta} \leq t^{-N-1}(1/t+|x|/t)^{-\eta} = t^{-N-1+\eta}(1+|x|)^{-\eta}.
}
We may set~$\eta = N+1+s$ and finish the proof of~\eqref{EstimateForf}.
\end{proof}

%\begin{Rem}
%We may thus set~$M = N+1$ in Proposition~\ref{GrandMaximalAtom}.
%\end{Rem}
\begin{cor}\label{LemmaAboutAtomsAndRiesz}
Let~$a\in \mathcal{S}'(\mathbb{R}^d)$ be a~$(\beta, N)$-atom adapted to a cube~$Q$. Then, for any~$\alpha \in (d-\beta,d)$, we have 
\begin{equation}
    |\mathrm{I}_\alpha[a](x)|\lesssim \ell(Q)^{\alpha - d}\Big(1+ \frac{|x-c_Q|}{\ell(Q)}\Big)^{-(d-\alpha + M)},
\end{equation}
provided~$N$ is sufficiently large (depending on~$M$); here~$c_Q$ stands for the center of~$Q$ and~$\ell(Q)$ denotes the sidelength of this cube.
\end{cor}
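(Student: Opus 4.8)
The plan is to feed the heat-semigroup representation of the Riesz potential into the two pointwise bounds of Propositions~\ref{GrandMaximalAtom} and~\ref{GrandMaximalAtom2}. Recall from the introduction that
\[
\mathrm{I}_\alpha[a](x) = \frac{1}{\Gamma(\alpha/2)}\int_0^\infty t^{\alpha/2-1}\, e^{t\Delta}a(x)\,dt,
\]
and observe that $e^{t\Delta}a = a*\Phi_{\sqrt t}$, where $\Phi(y)=(4\pi)^{-d/2}e^{-|y|^2/4}$ and $\Phi_s(y)=s^{-d}\Phi(y/s)$ are the $L^1$-dilations; indeed $\Phi_{\sqrt t}$ is exactly the heat kernel at time $t$. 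Since $\Phi$ is a fixed Schwartz function, for any finite collection $\mathcal F$ of Schwartz seminorms the quantity $\|\Phi\|_{\mathcal F}$ is a finite constant, so applying Propositions~\ref{GrandMaximalAtom} and~\ref{GrandMaximalAtom2} to $\Phi/\|\Phi\|_{\mathcal F}$ and using linearity of $a\mapsto a*(\cdot)_t$, we get, with the dilation parameter $\sqrt t$ in place of $t$, that for any $M_1>d$ and any $s>0$,
\[
t^{\frac{d-\beta}{2}}|e^{t\Delta}a(x)| \lesssim \ell(Q)^{-\beta}\Big(1+\tfrac{\dist(x,Q)}{\ell(Q)}\Big)^{-M_1},\quad \sqrt t\le \ell(Q),
\]
\[
t^{\frac{d-s}{2}}|e^{t\Delta}a(x)| \lesssim \ell(Q)^{-s}\Big(1+\tfrac{\dist(x,Q)}{\ell(Q)}\Big)^{-N-1-s},\quad \sqrt t\ge \ell(Q),
\]
where the implicit constants depend only on $d,\beta,M_1,s$ (and on $\Phi$, hence on nothing else) — in particular not on $a$ or $Q$.

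Next I would split $\int_0^\infty = \int_0^{\ell(Q)^2}+\int_{\ell(Q)^2}^\infty$ and estimate each piece by the corresponding bound above, factoring out the $x$-dependent decay. For the first piece the remaining integral is $\int_0^{\ell(Q)^2} t^{\frac{\alpha+\beta-d}{2}-1}\,dt$, which converges precisely because $\alpha>d-\beta$, and equals a constant times $\ell(Q)^{\alpha+\beta-d}$; combined with the prefactor $\ell(Q)^{-\beta}$ this produces $\ell(Q)^{\alpha-d}$. For the second piece one first fixes $s\in(0,d-\alpha)$, which is possible since $\alpha<d$; then $\int_{\ell(Q)^2}^\infty t^{\frac{\alpha+s-d}{2}-1}\,dt$ converges and equals a constant times $\ell(Q)^{\alpha+s-d}$, and together with the prefactor $\ell(Q)^{-s}$ this again gives $\ell(Q)^{\alpha-d}$. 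Thus
\[
|\mathrm{I}_\alpha[a](x)|\lesssim \ell(Q)^{\alpha-d}\Big[\Big(1+\tfrac{\dist(x,Q)}{\ell(Q)}\Big)^{-M_1}+\Big(1+\tfrac{\dist(x,Q)}{\ell(Q)}\Big)^{-N-1-s}\Big].
\]

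Finally, since $c_Q\in Q$ and $\mathrm{diam}(Q)=2\sqrt d\,\ell(Q)$, one has $\dist(x,Q)\le |x-c_Q|\le \dist(x,Q)+\sqrt d\,\ell(Q)$, whence $1+\tfrac{|x-c_Q|}{\ell(Q)}\le (1+\sqrt d)\big(1+\tfrac{\dist(x,Q)}{\ell(Q)}\big)$ and therefore $\big(1+\tfrac{\dist(x,Q)}{\ell(Q)}\big)^{-P}\lesssim \big(1+\tfrac{|x-c_Q|}{\ell(Q)}\big)^{-P}$ for any $P\ge 0$. It then remains only to choose the free parameters so that both decay exponents are at least $d-\alpha+M$: take $M_1:=\max\{d+1,\,d-\alpha+M\}$ (legitimate since $M_1>d$), and take $N$ large enough — depending on $M$, $\alpha$, $d$ and the fixed choice $s=(d-\alpha)/2$ — that $N+1+s\ge d-\alpha+M$. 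This is exactly the hypothesis ``$N$ sufficiently large (depending on $M$)'' in the statement, and it yields the claimed bound. I do not expect any genuine obstacle here: once the identification $e^{t\Delta}a=a*\Phi_{\sqrt t}$ is made, everything reduces to two elementary power integrals and the comparison of $\dist(x,Q)$ with $|x-c_Q|$; the only point requiring care is bookkeeping the dependence of the Schwartz-seminorm collection $\mathcal F$ on $M_1$ and $s$, and of the admissible $N$ on $M$.
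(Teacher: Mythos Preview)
Your proof is correct and follows essentially the same approach as the paper: both represent $\mathrm{I}_\alpha[a]$ as a superposition of dilated convolutions (you via the continuous heat-semigroup integral $\int_0^\infty t^{\alpha/2-1}a*\Phi_{\sqrt t}\,dt$, the paper via the dyadic sum $\sum_{k\in\Z}2^{-\alpha k}a*\Phi_{2^{-k}}$) and then feed the pointwise bounds from Propositions~\ref{GrandMaximalAtom} and~\ref{GrandMaximalAtom2} into the small- and large-scale pieces respectively. If anything, you are more explicit than the paper, which only cites Proposition~\ref{GrandMaximalAtom} even though the large-scale bound from Proposition~\ref{GrandMaximalAtom2} is clearly also needed.
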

\begin{proof}
We may represent
\eq{
\mathrm{I}_\alpha [a] = \sum\limits_{k\in \Z}2^{-\alpha k}\Phi_{2^{-k}}*a
}
for a specific Schwartz function~$\Phi$ and apply Proposition~\ref{GrandMaximalAtom}.
\end{proof}

\begin{cor}\label{PointwiseMaximalAtom}
Let~$a\in \mathcal{S}'(\R^d)$ be a~$(\beta,N)$-atom adapted to the cube~$Q$. Then, for some collection~$\F$,
\eq{
\M_{\F,d-\beta}[a](x) \lesssim (\ell(Q))^{-\beta}\Big(1 + \frac{\dist(x,Q)}{\ell(Q)}\Big)^{-N-1-\beta}.
}
\end{cor}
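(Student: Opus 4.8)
The plan is to read off $\M_{\F,d-\beta}[a](x) = \sup_{t>0}\,\sup_{\|\Phi\|_{\F}\le 1} t^{d-\beta}|a*\Phi_t(x)|$ from the two grand maximal bounds already in hand, by splitting the range of the dilation parameter at the scale~$\ell(Q)$: Proposition~\ref{GrandMaximalAtom} will handle the small scales $t \le \ell(Q)$ and Proposition~\ref{GrandMaximalAtom2} the large scales $t \ge \ell(Q)$. No preliminary normalization is needed, since both propositions are already stated for an arbitrary adapted cube, so I would keep a general~$Q$ throughout.

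For the small scales I would apply Proposition~\ref{GrandMaximalAtom} with the exponent $M := \max\{N+1+\beta,\, d+1\}$. As $M > d$, the proposition produces a finite collection~$\F_1$ of Schwartz seminorms such that $\|\Phi\|_{\F_1}\le 1$ forces $t^{d-\beta}|a*\Phi_t(x)| \lesssim \ell(Q)^{-\beta}\big(1+\dist(x,Q)/\ell(Q)\big)^{-M}$ for all $t\le\ell(Q)$; since $M \ge N+1+\beta$ and $\dist(x,Q)/\ell(Q)\ge 0$, I may weaken the decay exponent from $-M$ to $-N-1-\beta$, which is exactly the target. For the large scales the crucial point is the calibration of parameters in Proposition~\ref{GrandMaximalAtom2}: choosing $s := \beta$ there produces precisely the power $t^{d-s}=t^{d-\beta}$ occurring in $\M_{\F,d-\beta}$, together with the prefactor $\ell(Q)^{-s}=\ell(Q)^{-\beta}$ and the spatial decay $\big(1+\dist(x,Q)/\ell(Q)\big)^{-N-1-s}=\big(1+\dist(x,Q)/\ell(Q)\big)^{-N-1-\beta}$; here one uses that a $(\beta,N)$-atom carries the vanishing-moment conditions of order~$N$ required by that proposition. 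This yields a second finite collection~$\F_2$ with the matching bound on $t\ge\ell(Q)$.

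Finally I would take $\F := \F_1\cup\F_2$, which is again finite, and note that $\|\Phi\|_{\F}\le 1$ implies both $\|\Phi\|_{\F_1}\le 1$ and $\|\Phi\|_{\F_2}\le 1$; hence for every $t>0$ one of the two displayed estimates applies (they agree at $t=\ell(Q)$), and taking the supremum over $t>0$ and over admissible~$\Phi$ gives the asserted pointwise inequality. I do not expect a genuine obstacle: all the analytic substance is contained in Propositions~\ref{GrandMaximalAtom} and~\ref{GrandMaximalAtom2}, and the only delicate point is the bookkeeping with parameters — picking $s=\beta$ in the large-scale estimate so the $t$-power is exactly $d-\beta$, and $M$ large enough in the small-scale estimate so its spatial decay is at least $N+1+\beta$ — together with the harmless union of the two seminorm collections.
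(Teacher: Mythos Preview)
Your proposal is correct and matches the paper's intended derivation: the corollary is stated without proof in the paper, as an immediate consequence of Propositions~\ref{GrandMaximalAtom} and~\ref{GrandMaximalAtom2}, and your splitting at $t=\ell(Q)$ with the parameter choices $M\ge N+1+\beta$ (small scales) and $s=\beta$ (large scales), together with the union $\F=\F_1\cup\F_2$, is exactly the bookkeeping that makes this explicit.
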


\begin{lem}\label{maximal bound}
If~$a$ is a~$\beta$-atom, then
\begin{align*}
\|\M_{\F,d-\beta} a\|_{L^1(\Haus_\infty^\beta)} \lesssim 1.
\end{align*}
\end{lem}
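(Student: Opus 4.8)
The plan is to deduce the claim directly from the pointwise bound on the fractional grand maximal function of an atom established in Corollary~\ref{PointwiseMaximalAtom}, combined with the elementary estimate on the Choquet integral of a radially-decaying bump against Hausdorff content. Since~$a$ is a~$\beta$-atom, it is in particular a~$(\beta,N)$-atom adapted to some cube~$Q$, so Corollary~\ref{PointwiseMaximalAtom} gives
\begin{align*}
\M_{\F,d-\beta} a(x) \lesssim (\ell(Q))^{-\beta}\Big(1 + \frac{\dist(x,Q)}{\ell(Q)}\Big)^{-N-1-\beta}
\end{align*}
for a suitable finite collection~$\F$ of Schwartz seminorms. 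By scaling and translation invariance of both sides of the desired inequality (the left side scales as claimed by Remark~\ref{DilationRemark} applied to~$a$, while~$\Haus_\infty^\beta$ is translation invariant and scales with the right homogeneity~$\beta$ under dilation), it suffices to treat~$Q = [-1,1]^d$, where we must show
\begin{align*}
\int_{\R^d}\big(1+|x|\big)^{-N-1-\beta}\,d\Haus_\infty^\beta(x) \lesssim 1.
\end{align*}

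The key step is then the Choquet-integral computation. By definition,
\begin{align*}
\int_{\R^d}\big(1+|x|\big)^{-N-1-\beta}\,d\Haus_\infty^\beta
= \int_0^\infty \Haus_\infty^\beta\Big(\big\{x : (1+|x|)^{-N-1-\beta} > t\big\}\Big)\,dt.
\end{align*}
For~$t \geq 1$ the superlevel set is empty. For~$0 < t < 1$ the superlevel set is the ball~$B(0, R_t)$ with~$R_t = t^{-1/(N+1+\beta)} - 1$, whose spherical Hausdorff content is~$\Haus_\infty^\beta(B(0,R_t)) = \omega_\beta R_t^\beta \lesssim (1+R_t)^\beta \approx t^{-\beta/(N+1+\beta)}$. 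Hence the integral is controlled by~$\int_0^1 t^{-\beta/(N+1+\beta)}\,dt$, which converges since~$\beta/(N+1+\beta) < 1$ (indeed this ratio is~$< 1$ automatically, so no largeness of~$N$ is even required here). This completes the normalized case, and undoing the rescaling via Remark~\ref{DilationRemark} — which sends~$\ell(Q)^{-\beta}$ times the normalized bound to exactly the contribution of~$a$, with the Choquet integral picking up a compensating~$\ell(Q)^{\beta}$ — finishes the proof.

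I expect no serious obstacle; the only points requiring a little care are (i) checking that the Choquet integral truly behaves homogeneously of degree~$\beta$ under dilations of the integrand's argument, so that the reduction to~$Q=[-1,1]^d$ is legitimate, and (ii) confirming that~$\M_{\F,d-\beta}a$ is genuinely $\Haus_\infty^\beta$-quasicontinuous (so that it defines an element of the space~$L^1(\Haus_\infty^\beta)$ rather than merely having finite Choquet integral); the latter follows since the bound of Corollary~\ref{PointwiseMaximalAtom} exhibits~$\M_{\F,d-\beta}a$ as (dominated by) a continuous function, and continuous functions are quasicontinuous. Subadditivity of~$\Haus_\infty^\beta$ is what makes the Choquet integral sublinear, so the passage from the single-atom estimate here to Theorem~\ref{maximal bound dsbeta} for general elements of~$DS_\beta(\R^d)$ is then immediate from the definition of the atomic norm.
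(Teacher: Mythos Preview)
Your proposal is correct and follows essentially the same route as the paper's proof: both invoke Corollary~\ref{PointwiseMaximalAtom} for the pointwise decay bound and then estimate the Choquet integral via the layer-cake formula by bounding the Hausdorff content of the superlevel sets (which are balls). The only cosmetic difference is that the paper writes the layer-cake as a dyadic sum $\sum_{k\geq 0}2^{-k}\Haus_\infty^\beta(\{\M_{\F,d-\beta}a\geq 2^{-k}\})$ and records the bound $2^{k\beta/(\beta+1)}$ (i.e.\ taking $N=0$, which is the cancellation order of a $\beta$-atom as defined in the introduction), whereas you use the continuous integral and keep the general exponent $N+1+\beta$; your additional remarks on scaling and quasicontinuity are correct but not spelled out in the paper.
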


\begin{proof}[Proof of Lemma \ref{maximal bound}]
We wish to bound the sum
\eq{
\sum\limits_{k \geq 0}2^{-k} \Haus_\infty^\beta\Big(\Set{x\in\R^d}{\M_{\F,d-\beta}[a](x) \geq 2^{-k}}\Big).
}
By Corollary~\ref{PointwiseMaximalAtom}, the latter sum is bounded by
\eq{
\sum\limits_{k \geq 0} 2^{-k} 2^{\frac{k\beta}{\beta+1}} \lesssim 1.
}
\end{proof}

\section{The Dimension Estimate Revisited}\label{dimension_estimates}

We begin this Section with the
\begin{proof}[Proof of Theorem \ref{dimension complicated}]

We follow the proof of Theorem \ref{dimension simplified}, mutatis mutandis.  

As there, it suffices to prove that for $A \subset Q_0$, $\mathcal{H}^\beta(A)=0$ implies $|\mu|(A)=0$ and we show that for all~$\epsilon > 0 $ there exists~$\delta > 0$ such that for any Borel set~$A \subset Q_0$ with~$\Haus^{\beta}_\infty(A) < \delta$ we also have~$|\mu|(A) < \epsilon$.

Let $\epsilon>0$ be given.  First, an argument analogous to that of claim \eqref{absolute continuity}  shows that there exists $\delta>0$ such that if $\Haus^{\beta}_\infty(A) < \delta$, then
\begin{align}\label{absolute continuity grand maximal}
\int_A \mathrm{M}_{\mathcal{F},d-\beta} [\mu] \;d\mathcal{H}^{\beta}_\infty <c\epsilon
\end{align}
for any fixed choice of $c>0$ to be chosen later (and which is only for aesthetic purposes).

Second, as in \eqref{small_cube_control} we may choose $N_\epsilon \in \mathbb{N}$ such that
\begin{align*}
|\mu|( \{ x \in Q_0 : 0<l_x<\frac{1}{n}\}) <c\epsilon 
\end{align*}
for every $n\geq N_\epsilon$.  Then the set
\begin{align*}
A':=\{ x \in A : l_x \geq \frac{1}{N_\epsilon}\}
\end{align*}
satisfies
\begin{align*}
    |\mu|(Q) \leq 2|\mu(Q)|
\end{align*}
for $x \in A'$
for all cubes $Q$ which contain $x$ and satisfy  $l(Q) < \frac{1}{N_\epsilon}$.  Therefore we find the analogue of \eqref{large_cubes_remain}:
\begin{align}
|\mu|(A) \leq |\mu|(A') + c\epsilon.
\end{align}

Let $A$ be a Borel set such that with~$\Haus_\infty^{\beta}(A) < \delta$. 
 Then one can find a balls $\{B_{r_j}(x_j)\}$  such that~$A \subset \cup_{j} B_{r_j}(x_j)$ and
\eq{
\sum\limits_{j}r_j^\beta < \delta.
}
Without loss of generality, we may assume~$A\cap B_{r_{j}}(x_j)\ne \varnothing$ for any~$j$. We may also assume that the sequence~$\{r_j\}_j$ is non-increasing.

Let now~$A_n = \cup_{j=1}^nB_{r_j}(x_j)$, here~$n\in \N\cup\infty$. The set~$A_\infty$ is bounded, and, therefore, has finite measure~$|\mu|$. Thus, there exists~$n$ such that~$|\mu|(A_\infty \setminus A_n) < c\epsilon$, and so
\begin{align}
|\mu|(A') &\leq |\mu|(A'\setminus A_n) + |\mu|(A'\cap A_n)\\
&\leq |\mu|(A_\infty\setminus A_n) + |\mu|(A'\cap A_n)\\
&\leq c\epsilon+ |\mu|(A'\cap A_n).
\end{align}
Note that since~$\Haus_\infty^\beta(A_n) < \delta$ by \eqref{absolute continuity grand maximal}
\eq{
\int\limits_{A_n}\MM_{\alpha, \F, r_n}[\mu]d\Haus_\infty^\beta \leq \int\limits_{A_n}\MM_{\alpha, \F}[\mu]d\Haus_\infty^\beta < c\epsilon,
} 
where~$\MM_{\alpha,\F,r_n}$ is the anti-local maximal function defined by
\eq{
\MM_{\alpha,\F, \rho}[\mu] = \sup\limits_{\Phi \in S_\F} \sup\limits_{t > \rho}t^{\alpha}\big|\mu*\Phi_t(x)\big|.
} 
By now we have only used the obvious inequality~$\MM_{\alpha,\F,r_n}[\mu] \leq \MM_{\alpha,\F}[\mu]$; below we will use that the truncated function is somehow regular; this property is expressed by the following claim.

{\bf Claim.} Let~$\MM_{\alpha,\F,\rho}[\mu](x) > \lambda$ for some point~$x\in\R^d$ and~$\lambda > 0$. Then
\eq{
\MM_{\alpha,\F,\rho}[\mu](y) \gtrsim \lambda,\qquad y\in B_{\rho}(x).
}
Let us prove the claim. Assume
\eq{
t^{\alpha}|\mu*\Phi_t(x)| \geq \lambda,\qquad\text{for some}\ t > \rho, \ \|\Phi\|_{\F} \leq 1.
} 
Then, we have
\eq{
t^{\alpha} |\mu*\Phi_t(y)| = t^{\alpha}|\mu*\Psi_t(x)|, 
}
where the function~$\Psi$ is defined by the formula~$\Psi(z)=\Phi(z + (y-x)/t)$. Since~$y\in B_\rho(x)$ and~$t > \rho$, we have~$|(y-x)/t| \leq 1$, and~$\|\Psi\|_{\F} \lesssim 1$. {\bf This proves the claim. }

Now consider the set~$A_n$, which is the union of a finite number of balls of radii at least~$r_n$. Denote by $\cup_j U_j$ a cover of $A_n$ by balls of radius $r_n$ with finite overlap. Then
\begin{align*}
A_n &\subset \cup_j U_j\\
\cup_j U_j &\subset \cup_{j=1}^nB_{2r_j}(x_j)=: \tilde{A}_n.
\end{align*}
Then the fact that
\begin{align*}
\mathcal{H}^\beta_\infty(\tilde{A}_n) \leq  2^\beta \delta
\end{align*}
and by choosing $\delta$ slightly smaller implies therefore
\begin{align*}
\int\limits_{\tilde{A}_n}\MM_{\alpha, \F, r_n}[\mu]\;d\Haus_\infty^\beta <c\epsilon.
\end{align*}

For $c_j$ the center of the ball~$U_j$ of radius $r_n$, consider the sets
\begin{align}
\Omega_{n,k} = \cup_j U_j,\quad \text{where}\ \M_{\alpha,\F, r_n}[\mu](c_j) \in [2^k,2^{k+1}). 
\end{align}
By the claim,~$\M_{\alpha,\F,r_n}[\mu] \sim 2^k$ on~$\Omega_{n,k}$, and each~$\Omega_{n,k}$ is contained in~$\tilde{A}_n$. Thus,
\eq{\label{eq215}
\sum\limits_k 2^{k}\Haus_\infty^\beta(\Omega_{n,k})< c\epsilon.
}
By Corollary~\ref{StructuralCorollary} (packing condition lemma), there is a covering~$\{B_{n,k,j}\}_j$ of~$\Omega_{n,k}$ by the balls of radii at least~$r_n$ such that the sum of their radii raised to the power~$\beta$ is controlled by~$\Haus_\infty^{\beta}(\Omega_{n,k})$:
\begin{align*}
\sum_j r_{n,k,j}^\beta \leq C\Haus_\infty^{\beta}(\Omega_{n,k}).
\end{align*}

As we wish to estimate~$|\mu|(A'\cap A_n)$, we may also assume each set~$B_{n,k,j}$ covers at least one point~$x_{n,k,j}$ of~$A'$, and by doubling the radii that this point is the center of $B_{n,k,j}$. Then,
\mlt{
|\mu| (A'\cap A_n) \leq \sum\limits_{k,j}|\mu|(B_{n,k,j}) \leq \sum\limits_{k,j}r_{n,k,j}^{d} |\mu|*\Phi_{r_{n,k,j}}(x_{n,k,j})\\ \Lref{\scriptscriptstyle r_{n,k,j} \leq 1/N_\epsilon} 2\sum\limits_{k,j}\left| r_{n,k,j}^{d} \mu*\Phi_{r_{n,k,j}}(x_{n,k,j}) \right|\leq \sum\limits_{k,j}2^{k} r_{n,k,j}^\beta \Leqref{eq215} 2Cc\epsilon.
}
Here~$\Phi$ is a radially symmetric radially decreasing Schwartz function with compact support,~$r_{n,k,j}$ is the radius of~$Q_{n,k,j}$, and~$x_{n,k,j}$ is its center.

This shows that
\begin{align*}
|\mu|(A) \leq |\mu|(A') + c\epsilon \\
&\leq  |\mu|(A'\cap A_n)+ 2c\epsilon\\
&\leq (2C+2)c\epsilon,\\
&\leq \epsilon
\end{align*}
with the choice $c=(2C+2)^{-1}$, where $C>0$ is the constant arising in the packing condition Corollary~\ref{StructuralCorollary}.  In particular, for any $\epsilon>0$ one has
\begin{align*}
|\mu|(A) \leq \epsilon
\end{align*}
whenever $\Haus_\infty^\beta(A) < \delta$ for $\delta>0$ small enough that $\Haus_\infty^\beta(A) < 2^\beta \delta$ implies
\begin{align*}
\int_A \mathrm{M}_{\mathcal{F},d-\beta} [\mu] \;d\mathcal{H}^{\beta}_\infty <c\epsilon
\end{align*}
for the fixed choice of $\epsilon>0$ and $c=(2C+2)^{-1}$.

\end{proof}

\section*{Acknowledgements}
D. Spector is supported by the National Science and Technology Council of Taiwan under research grant number 110-2115-M-003-020-MY3 and the Taiwan Ministry of Education under the Yushan Fellow Program.  D. Stolyarov is supported by the Basis Foundation grant no. 21-7-2-12-1.
%\begin{enumerate}
%    \item Daniel - Clean Section 5 and match notation, epsilon and delta to Section 3
%    \item Dima - Can you remove the higher moment conditions from Proposition 4?  In particular, one would like to apply this Proposition to elements of $DS_\beta$, which only have one vanishing moment on their atoms.
%    \item  Daniel - Clean up end of introduction and write a Theorem about the embedding of divergence free fields into $DS_1$.  Maybe also add a question about what other subspaces of $L^1$ (measures) can be shown to embed into $DS_\beta$.
%\end{enumerate}
%

\begin{bibdiv}

\begin{biblist}

\bib{Adams:1988}{article}{
   author={Adams, D. R.},
   title={A note on Choquet integrals with respect to Hausdorff capacity},
   conference={
      title={Function spaces and applications},
      address={Lund},
      date={1986},
   },
   book={
      series={Lecture Notes in Math.},
      volume={1302},
      publisher={Springer, Berlin},
   },
   date={1988},
   pages={115--124},
}
\bib{AAR}{article}{
	title={An elementary approach to the dimension of measures satisfying a first-order linear PDE constraint},
	author={Arroyo-Rabasa, A.},
	journal={Proc. Amer. Math. Soc.},
	volume={148},
	number={1},
	pages={273--282},
	year={2020}
}

\bib{ARDPHR}{article}{
   author={Arroyo-Rabasa, Adolfo},
   author={De Philippis, Guido},
   author={Hirsch, Jonas},
   author={Rindler, Filip},
   title={Dimensional estimates and rectifiability for measures satisfying
   linear PDE constraints},
   journal={Geom. Funct. Anal.},
   volume={29},
   date={2019},
   number={3},
   pages={639--658},
   issn={1016-443X},
   review={\MR{3962875}},
   doi={10.1007/s00039-019-00497-1},
}

%\bib{Alvino}{article}{
%   author={Alvino, Angelo},
%   title={Sulla diseguaglianza di Sobolev in spazi di Lorentz},
%   journal={Boll. Un. Mat. Ital. A (5)},
%   volume={14},
%   date={1977},
%   number={1},
%   pages={148--156},
%}

\bib{RA}{article}{
   author={Ayoush, Rami},
   title={On finite configurations in the spectra of singular measures},
   journal={Math. Z.},
   volume={304},
   date={2023},
   number={1},
   pages={Paper No. 6, 17},
   issn={0025-5874},
   review={\MR{4569405}},
   doi={10.1007/s00209-023-03257-y},
}

\bib{AW}{article}{
   author={Ayoush, Rami},
   author={Wojciechowski, Micha\l },
   title={On dimension and regularity of vector-valued measures under
   Fourier analytic constraints},
   journal={Illinois J. Math.},
   volume={66},
   date={2022},
   number={3},
   pages={289--313},
   issn={0019-2082},
   review={\MR{4484222}},
   doi={10.1215/00192082-10018154},
}

\bib{ASW}{article}{
AUTHOR = {R. Ayoush and D. Stolyarov and  M. Wojciechowski},
     TITLE = {Sobolev martingales},
   JOURNAL = {Rev. Mat. Iberoam.},
  FJOURNAL = {Revista Matem\'{a}tica Iberoamericana},
    VOLUME = {37},
      YEAR = {2021},
    NUMBER = {4},
     PAGES = {1225--1246},
      ISSN = {0213-2230,2235-0616},
   MRCLASS = {60G46 (28A80 46E35)},
  MRNUMBER = {4269395},
       DOI = {10.4171/rmi/1224},
       URL = {https://doi.org/10.4171/rmi/1224},
}

\bib{BourgainBrezis2004}{article}{
   author={Bourgain, Jean},
   author={Brezis, Ha\"{\i}m},
   title={New estimates for the Laplacian, the div-curl, and related Hodge
   systems},
   language={English, with English and French summaries},
   journal={C. R. Math. Acad. Sci. Paris},
   volume={338},
   date={2004},
   number={7},
   pages={539--543},
   issn={1631-073X},
   review={\MR{2057026}},
   doi={10.1016/j.crma.2003.12.031},
}
\bib{BourgainBrezis2007}{article}{
   author={Bourgain, Jean},
   author={Brezis, Ha\"{\i}m},
   title={New estimates for elliptic equations and Hodge type systems},
   journal={J. Eur. Math. Soc. (JEMS)},
   volume={9},
   date={2007},
   number={2},
   pages={277--315},
   issn={1435-9855},
   review={\MR{2293957}},
   doi={10.4171/JEMS/80},
}

\bib{Chen-Spector}{article}{
   author={Chen, You-Wei},
   author={Spector, Daniel},
   title={On functions of bounded $\beta$-dimensional mean oscillation},
   journal={Adv. Calc. Var.},
%   volume={},
%   date={},
%   number={},
%   pages={},
%   issn={},
%   review={},
doi = {doi:10.1515/acv-2022-0084},
url = {https://doi.org/10.1515/acv-2022-0084},
}

\bib{Dorronsoro}{article}{
   author={Dorronsoro, Jos\'{e} R.},
   title={Differentiability properties of functions with bounded variation},
   journal={Indiana Univ. Math. J.},
   volume={38},
   date={1989},
   number={4},
   pages={1027--1045},
   issn={0022-2518},
   review={\MR{1029687}},
   doi={10.1512/iumj.1989.38.38047},
}

%\bib{EvansGariepy}{book}{
%   author={Evans, Lawrence C.},
%   author={Gariepy, Ronald F.},
%   title={Measure theory and fine properties of functions},
%   series={Studies in Advanced Mathematics},
%   publisher={CRC Press, Boca Raton, FL},
%   date={1992},
%   pages={viii+268},
%   isbn={0-8493-7157-0},
%   review={\MR{1158660}},
%}

\bib{Falconer}{book}{
   author={Falconer, Kenneth},
   title={Techniques in fractal geometry},
   publisher={John Wiley \& Sons, Ltd., Chichester},
   date={1997},
   pages={xviii+256},
   isbn={0-471-95724-0},
   review={\MR{1449135}},
}

\bib{FF}{article}{
   author={Federer, Herbert},
   author={Fleming, Wendell H.},
   title={Normal and integral currents},
   journal={Ann. of Math. (2)},
   volume={72},
   date={1960},
   pages={458--520},
   issn={0003-486X},
   review={\MR{0123260}},
   doi={10.2307/1970227},
}

\bib{FeffermanStein}{article}{
   author={Fefferman, C.},
   author={Stein, E. M.},
   title={$H^{p}$ spaces of several variables},
   journal={Acta Math.},
   volume={129},
   date={1972},
   number={3-4},
   pages={137--193},
   issn={0001-5962},
   review={\MR{447953}},
   doi={10.1007/BF02392215},
}

\bib{Gagliardo}{article}{
   author={Gagliardo, Emilio},
   title={Propriet\`a di alcune classi di funzioni in pi\`u variabili},
   language={Italian},
   journal={Ricerche Mat.},
   volume={7},
   date={1958},
   pages={102--137},
   issn={0035-5038},
   review={\MR{102740}},
}

\bib{GS}{article}{
   author={Garg, Rahul},
   author={Spector, Daniel},
   title={On the regularity of solutions to Poisson's equation},
   journal={C. R. Math. Acad. Sci. Paris},
   volume={353},
   date={2015},
   number={9},
   pages={819--823},
   issn={1631-073X},
   review={\MR{3377679}},
   doi={10.1016/j.crma.2015.07.001},
}

\bib{GS1}{article}{
   author={Garg, Rahul},
   author={Spector, Daniel},
   title={On the role of Riesz potentials in Poisson's equation and Sobolev
   embeddings},
   journal={Indiana Univ. Math. J.},
   volume={64},
   date={2015},
   number={6},
   pages={1697--1719},
   issn={0022-2518},
   review={\MR{3436232}},
   doi={10.1512/iumj.2015.64.5706},
}
\bib{grafakos}{book}{
   author={Grafakos, Loukas},
   title={Classical Fourier analysis},
   series={Graduate Texts in Mathematics},
   volume={249},
   edition={3},
   publisher={Springer, New York},
   date={2014},
   pages={xviii+638},
   %isbn={978-1-4939-1193-6},
  % isbn={978-1-4939-1194-3},
   %review={\MR{3243734}},
   %doi={10.1007/978-1-4939-1194-3},
}

%\bib{Hedberg}{article}{
%   author={Hedberg, Lars Inge},
%   title={On certain convolution inequalities},
%   journal={Proc. Amer. Math. Soc.},
%   volume={36},
%   date={1972},
%   pages={505--510},
%   issn={0002-9939},
%   review={\MR{312232}},
%   doi={10.2307/2039187},
%}

\bib{SpectorHernandezRaita2021}{article}{
	author = {Hernandez, F.},
	author ={Spector, D.},
	author ={Rai\cb{t}\u{a}, B.},
	title = {Endpoint $L^1$
		estimates for Hodge systems}
journal={Math. Ann.}
	note={ https://doi.org/10.1007/s00208-022-02383-y}
		date={2022}
}

\bib{GHS}{article}{
	author = {Hernandez, F.},
	author ={Spector, D.},
	author ={Goodman, J.},
	title = {Two approximation results for divergence free measures},
journal={Port. Math.},
	note={ https://arxiv.org/abs/2010.14079},
		date={to appear}
}

\bib{HS}{article}{
	author = {Hernandez, F.},
	author ={Spector, D.},
	title = {Fractional Integration and Optimal Estimates for Elliptic Systems},
	journal={Calc. Var. Partial Differential Equations},
	note = {https://arxiv.org/abs/2008.05639},
	date={to appear}
}

\bib{MS}{article}{
   author={Mart\'{\i}nez, \'{A}ngel D.},
   author={Spector, Daniel},
   title={An improvement to the John-Nirenberg inequality for functions in
   critical Sobolev spaces},
   journal={Adv. Nonlinear Anal.},
   volume={10},
   date={2021},
   number={1},
   pages={877--894},
   issn={2191-9496},
   review={\MR{4191703}},
   doi={10.1515/anona-2020-0157},
}

\bib{Mattila}{book}{
   author={Mattila, Pertti},
   title={Geometry of sets and measures in Euclidean spaces},
   series={Cambridge Studies in Advanced Mathematics},
   volume={44},
   note={Fractals and rectifiability},
   publisher={Cambridge University Press, Cambridge},
   date={1995},
   pages={xii+343},
   isbn={0-521-46576-1},
   isbn={0-521-65595-1},
   review={\MR{1333890}},
   doi={10.1017/CBO9780511623813},
}

\bib{mazya}{article}{
   author={Maz\cprime ja, V. G.},
   title={Classes of domains and imbedding theorems for function spaces},
      journal={Soviet Math. Dokl.},
% Soviet Mathematics. Doklady,
      volume={1},
      date={1960},
      issn={0197-6788},
   translation={
      journal={Soviet Math. Dokl.},
      volume={1},
      date={1960},
      pages={882--885},
      issn={0197-6788},
   },
   review={\MR{0126152}},
}

\bib{maz_trace}{article}{
   author={Maz\cprime ja, V. G.},
   title={Strong capacity-estimates for ``fractional'' norms},
   note={Numerical methods and questions on organization of computations},
   language={Russian},
   journal={Zap. Nau\v{c}n. Sem. Leningrad. Otdel. Mat. Inst. Steklov.
   (LOMI)},
   volume={70},
   date={1977},
   pages={161--168, 292},
   review={\MR{0500118}},
}

\bib{Meyers-Ziemer}{article}{
      author={Meyers, N.~G.},
      author={Ziemer, W.~P.},
       title={Integral inequalities of {P}oincar\'e and {W}irtinger type for
  {BV} functions},
        date={1977},
     journal={Amer. J. Math.},
      volume={99},
       pages={1345\ndash 1360},
}

\bib{Nirenberg}{article}{
   author={Nirenberg, L.},
   title={On elliptic partial differential equations},
   journal={Ann. Scuola Norm. Sup. Pisa Cl. Sci. (3)},
   volume={13},
   date={1959},
   pages={115--162},
   issn={0391-173X},
   review={\MR{109940}},
}

\bib{Ponce-Spector}{article}{
   author={Ponce, Augusto C.},
   author={Spector, Daniel},
   title={A boxing inequality for the fractional perimeter},
   journal={Ann. Sc. Norm. Super. Pisa Cl. Sci. (5)},
   volume={20},
   date={2020},
   number={1},
   pages={107--141},
   issn={0391-173X},
   review={\MR{4088737}},
}
\bib{Ponce-Spector-2}{article}{
   author={Ponce, Augusto C.},
   author={Spector, Daniel},
   title={A decomposition by non-negative functions in the Sobolev space
   $W^{k,1}$},
   journal={Indiana Univ. Math. J.},
   volume={69},
   date={2020},
   number={1},
   pages={151--169},
   issn={0022-2518},
   review={\MR{4077159}},
   doi={10.1512/iumj.2020.69.8237},
}

\bib{Ponce-Spector-1}{article}{
   author={Ponce, Augusto C.},
   author={Spector, Daniel},
   title={Some remarks on capacitary integrals and measure theory},
   conference={
      title={Potentials and partial differential equations---the legacy of
      David R. Adams},
   },
   book={
      series={Adv. Anal. Geom.},
      volume={8},
      publisher={De Gruyter, Berlin},
   },
   isbn={978-3-11-079265-2},
   isbn={978-3-11-079272-0},
   isbn={978-3-11-079278-2},
   date={[2023] \copyright 2023},
   pages={235--263},
   review={\MR{4654520}},
}

\bib{Raita_report}{article}{
	title={L1-estimates and A-weakly differentiable functions},
	author={Rai{\cb{t}}{\u{a}}, B.},
	year={2018},
	note={Technical Report OxPDE-18/01, University of Oxford}
}

\bib{RSS}{article}{
   author={Rai\c{t}\u{a}, Bogdan},
   author={Spector, Daniel},
   author={Stolyarov, Dmitriy},
   title={A trace inequality for solenoidal charges},
   journal={Potential Anal.},
   volume={59},
   date={2023},
   number={4},
   pages={2093--2104},
   issn={0926-2601},
   review={\MR{4684387}},
   doi={10.1007/s11118-022-10008-x},
}

\bib{RW}{article}{
AUTHOR = {M. Roginskaya and M. Wojciechowski},
     TITLE = {Singularity of vector valued measures in terms of {F}ourier
              transform},
   JOURNAL = {J. Fourier Anal. Appl.},
  FJOURNAL = {The Journal of Fourier Analysis and Applications},
    VOLUME = {12},
      YEAR = {2006},
    NUMBER = {2},
     PAGES = {213--223},
      ISSN = {1069-5869,1531-5851},
   MRCLASS = {42B10 (28A80)},
  MRNUMBER = {2224396},
MRREVIEWER = {Xavier\ Tolsa},
       DOI = {10.1007/s00041-005-5030-9},
       URL = {https://doi.org/10.1007/s00041-005-5030-9}
}

\bib{Sawyer1}{article}{
   author={Sawyer, Eric T.},
   title={Two weight norm inequalities for certain maximal and integral
   operators},
   conference={
      title={Harmonic analysis},
      address={Minneapolis, Minn.},
      date={1981},
   },
   book={
      series={Lecture Notes in Math.},
      volume={908},
      publisher={Springer, Berlin-New York},
   },
   date={1982},
   pages={102--127},
   review={\MR{654182}},
}

\bib{Sawyer2}{article}{
   author={Sawyer, Eric T.},
   title={Weighted norm inequalities for fractional maximal operators},
   conference={
      title={1980 Seminar on Harmonic Analysis},
      address={Montreal, Que.},
      date={1980},
   },
   book={
      series={CMS Conf. Proc.},
      volume={1},
      publisher={Amer. Math. Soc., Providence, R.I.},
   },
   date={1981},
   pages={283--309},
   review={\MR{670111}},
}

\bib{SSVS}{article}{
   author={Schikorra, Armin},
   author={Spector, Daniel},
   author={Van Schaftingen, Jean},
   title={An $L^1$-type estimate for Riesz potentials},
   journal={Rev. Mat. Iberoam.},
   volume={33},
   date={2017},
   number={1},
   pages={291--303},
   issn={0213-2230},
   review={\MR{3615452}},
   doi={10.4171/RMI/937},
}

\bib{Smirnov}{article}{
    AUTHOR = {Smirnov, S. K.},
     TITLE = {Decomposition of solenoidal vector charges into elementary
              solenoids, and the structure of normal one-dimensional flows},
   JOURNAL = {Algebra i Analiz},
  FJOURNAL = {Rossi\u{\i}skaya Akademiya Nauk. Algebra i Analiz},
    VOLUME = {5},
      YEAR = {1993},
    NUMBER = {4},
     PAGES = {206--238},
      ISSN = {0234-0852},
   MRCLASS = {49Q15 (49Q20 58A25 58C35)},
  MRNUMBER = {1246427},
MRREVIEWER = {Andrew\ Bucki}
}

\bib{Sobolev}{article}{
   author={Sobolev, S.L.},
    title={On a theorem of functional analysis},
   journal={Mat. Sb.},
   volume={4},
   number={46},
  year={1938},
  language={Russian},
   pages={471-497},
   translation={
      journal={Transl. Amer. Math. Soc.},
      volume={34},
     date={},
      pages={39-68},
   },
   }
   
   \bib{Spector}{article}{
   author={Spector, Daniel},
   title={An optimal Sobolev embedding for $L^1$},
   journal={J. Funct. Anal.},
   volume={279},
   date={2020},
   number={3},
   pages={108559, 26},
   issn={0022-1236},
   review={\MR{4093790}},
   doi={10.1016/j.jfa.2020.108559},
}

   \bib{Spector-NA}{article}{
   author={Spector, Daniel},
   title={New directions in harmonic analysis on $L^1$},
   journal={Nonlinear Anal.},
   volume={192},
   date={2020},
   pages={111685, 20},
   issn={0362-546X},
   review={\MR{4034690}},
   doi={10.1016/j.na.2019.111685},
}

\bib{Spector-PM}{article}{
   author={Spector, Daniel},
   title={A noninequality for the fractional gradient},
   journal={Port. Math.},
   volume={76},
   date={2019},
   number={2},
   pages={153--168},
   issn={0032-5155},
   review={\MR{4065096}},
   doi={10.4171/pm/2031},
}

\bib{SVS}{article}{
   author={Spector, Daniel},
   author={Van Schaftingen, Jean},
   title={Optimal embeddings into Lorentz spaces for some vector
   differential operators via Gagliardo's lemma},
   journal={Atti Accad. Naz. Lincei Rend. Lincei Mat. Appl.},
   volume={30},
   date={2019},
   number={3},
   pages={413--436},
   issn={1120-6330},
   review={\MR{4002205}},
   doi={10.4171/RLM/854},
}

\bib{S}{book}{
   author={Stein, Elias M.},
   title={Singular integrals and differentiability properties of functions},
   series={Princeton Mathematical Series, No. 30},
   publisher={Princeton University Press, Princeton, N.J.},
   date={1970},
   pages={xiv+290},
   review={\MR{0290095}},
}

\bib{Stein93}{book}{
   author={Stein, Elias M.},
   title={Harmonic analysis: real-variable methods, orthogonality, and
   oscillatory integrals},
   series={Princeton Mathematical Series},
   volume={43},
   note={With the assistance of Timothy S. Murphy;
   Monographs in Harmonic Analysis, III},
   publisher={Princeton University Press, Princeton, NJ},
   date={1993},
   pages={xiv+695},
   isbn={0-691-03216-5},
   review={\MR{1232192}},
}

   \bib{Stein-Weiss}{article}{
   author={Stein, Elias M.},
   author={Weiss, Guido},
   title={On the theory of harmonic functions of several variables. I. The
   theory of $H\sp{p}$-spaces},
   journal={Acta Math.},
   volume={103},
   date={1960},
   pages={25--62},
   issn={0001-5962},
   review={\MR{121579}},
   doi={10.1007/BF02546524},
}

%\bib{SW}{book}{
%   author={Stein, Elias M.},
%   author={Weiss, Guido},
%   title={Introduction to Fourier analysis on Euclidean spaces},
%   note={Princeton Mathematical Series, No. 32},
%   publisher={Princeton University Press, Princeton, N.J.},
%   date={1971},
%   pages={x+297},
%   review={\MR{0304972}},
%}

\bib{Stol-Woj}{article}{
   author={Stolyarov, Dmitriy M.},
   author={Wojciechowski, Michal},
   title={Dimension of gradient measures},
   language={English, with English and French summaries},
   journal={C. R. Math. Acad. Sci. Paris},
   volume={352},
   date={2014},
   number={10},
   pages={791--795},
   issn={1631-073X},
   review={\MR{3262909}},
   doi={10.1016/j.crma.2014.08.011},
}

   \bib{Stolyarov}{article}{
   author={Stolyarov, D. M.},
   title={Hardy-Littlewood-Sobolev inequality for $p=1$},
   language={Russian, with Russian summary},
   journal={Mat. Sb.},
   volume={213},
   date={2022},
   number={6},
   pages={125--174},
   issn={0368-8666},
   review={\MR{4461456}},
   doi={10.4213/sm9645},
}

\bib{Stolyarov-1}{article}{
   author={Stolyarov, D. M.},
   title={A theorem of Dorronsoro and its slight generalization},
   language={Russian, with English summary},
   journal={Zap. Nauchn. Sem. S.-Peterburg. Otdel. Mat. Inst. Steklov.
   (POMI)},
   volume={434},
   date={2015},
   pages={126--135},
   issn={0373-2703},
   translation={
      journal={J. Math. Sci. (N.Y.)},
      volume={215},
      date={2016},
      number={5},
      pages={624--630},
      issn={1072-3374},
   },
   review={\MR{3493705}},
   doi={10.1007/s10958-016-2869-z},
}

\bib{DS}{article}{
   author={Stolyarov, Dmitriy},
   title={Dimension estimates for vectorial measures with restricted
   spectrum},
   journal={J. Funct. Anal.},
   volume={284},
   date={2023},
   number={1},
   pages={Paper No. 109735},
   issn={0022-1236},
   review={\MR{4500728}},
   doi={10.1016/j.jfa.2022.109735},
}

\bib{Strauss}{article}{
   author={Strauss, Monty J.},
   title={Variations of Korn's and Sobolev's equalities},
   conference={
      title={Partial differential equations},
      address={Proc. Sympos. Pure Math., Vol. XXIII, Univ. California,
      Berkeley, Calif.},
      date={1971},
   },
   book={
      publisher={Amer. Math. Soc., Providence, R.I.},
   },
   date={1973},
   pages={207--214},
   review={\MR{0341064}},
}

\bib{Uchiyama}{article}{
   author={Uchiyama, Akihito},
   title={A constructive proof of the Fefferman-Stein decomposition of BMO
   $({\bf R}\sp{n})$},
   journal={Acta Math.},
   volume={148},
   date={1982},
   pages={215--241},
   issn={0001-5962},
   review={\MR{0666111}},
   doi={10.1007/BF02392729},
}

\bib{VS}{article}{
   author={Van Schaftingen, Jean},
   title={Function spaces between BMO and critical Sobolev spaces},
   journal={J. Funct. Anal.},
   volume={236},
   date={2006},
   number={2},
   pages={490--516},
   issn={0022-1236},
   review={\MR{2240172}},
   doi={10.1016/j.jfa.2006.03.011},
}

\bib{VanSchaftingen_2004}{article}{
   author={Van Schaftingen, Jean},
   title={Estimates for $L^1$--vector fields},
   journal={C. R. Math. Acad. Sci. Paris},
   volume={339},
   date={2004},
   number={3},
   pages={181--186},
   issn={1631-073X},
%    review={\MR{2078071}},
   doi={10.1016/j.crma.2004.05.013},
}

\bib{VanSchaftingen_2004_ARB}{article}{
   author={Van Schaftingen, Jean},
   title={Estimates for $L^1$ vector fields with a second order condition},
   journal={Acad. Roy. Belg. Bull. Cl. Sci. (6)},
   volume={15},
   date={2004},
   number={1-6},
   pages={103--112},
   issn={0001-4141},
%    review={\MR{2146098}},
}

\bib{VanSchaftingen_2010}{article}{
   author={Van Schaftingen, Jean},
   title={Limiting fractional and Lorentz space estimates of differential
   forms},
   journal={Proc. Amer. Math. Soc.},
   volume={138},
   date={2010},
   number={1},
   pages={235--240},
   issn={0002-9939},
%    review={\MR{2550188}},
   doi={10.1090/S0002-9939-09-10005-9},
}
    
\bib{VanSchaftingen_2013}{article}{
   author={Van Schaftingen, Jean},
   title={Limiting Sobolev inequalities for vector fields and canceling
   linear differential operators},
   journal={J. Eur. Math. Soc. (JEMS)},
   volume={15},
   date={2013},
   number={3},
   pages={877--921},
   issn={1435-9855},
%    review={\MR{3085095}},
   doi={10.4171/JEMS/380},
}

\bib{VanSchaftingen_2015}{article}{
   author={Van Schaftingen, Jean},
   title={Limiting Bourgain-Brezis estimates for systems of linear
   differential equations: theme and variations},
   journal={J. Fixed Point Theory Appl.},
   volume={15},
   date={2014},
   number={2},
   pages={273--297},
   issn={1661-7738},
%    review={\MR{3298002}},
   doi={10.1007/s11784-014-0177-0},
}     
\bib{Zygmund}{article}{
   author={Zygmund, A.},
   title={On a theorem of Marcinkiewicz concerning interpolation of
   operations},
   journal={J. Math. Pures Appl. (9)},
   volume={35},
   date={1956},
   pages={223--248},
   issn={0021-7824},
   review={\MR{80887}},
}

\end{biblist}
	
\end{bibdiv}

\end{document}